\newcommand{\comment}[1]{}
    \newcommand{\set}[1]{{\left\{#1\right\}}}
\newcommand{\pa}[1]{{\left(#1\right)}}
\newcommand{\sq}[1]{{\left[#1\right]}}
\newcommand{\gen}[1]{{\left\langle #1\right\rangle}}
\newcommand{\abs}[1]{{\left|#1\right|}}
\newcommand{\norm}[1]{{\left\|#1\right\|}}
\newcommand{\normal}{\mathrm}
\newcommand{\G}{\mathcal{G}}
\newcommand{\A}{\mathcal{A}}
\newcommand{\U}{\mathcal{U}}
\newcommand{\V}{\mathcal{V}}
\newcommand{\T}{\mathbb{T}}
\newcommand{\Z}{\mathbb{Z}}
\newcommand{\R}{\mathbb{R}}
\newcommand{\C}{\mathbb{C}}
\newcommand{\teta}{\theta}
\newcommand{\eps}{\varepsilon}
\renewcommand{\Re}{\operatorname{Re}}
\renewcommand{\Im}{\operatorname{Im}}
\newcommand{\sgn}{\operatorname{sgn}}
\newcommand{\Mat}{\operatorname{Mat}}
\newcommand{\GL}{\operatorname{GL}}
\newcommand{\pmat}[1]{{\begin{pmatrix}#1\end{pmatrix}}}
\newcommand{\amat}[2]{{\begin{array}{#1}#2\end{array}}}
\newcommand{\eqsys}[1]{{\left\{\begin{array}{ll}#1\end{array}\right.}}
\newcommand{\Ham}{\operatorname{Ham}}
\newcommand{\h}{\operatorname{H}}
\newcommand{\average}[1]{\int_{\T^n}#1}
\newcommand{\media}[1]{\int_\T #1}
\newcommand{\Uh}{\mathcal{U}^{\operatorname{Ham}}}
\newcommand{\Vh}{\mathcal{V}^{\operatorname{Ham}}}
\newcommand{\tensor}{\otimes}
\newcommand{\inderiv}[1]{#1'^{-1}}    
\newcommand{\pull}[1]{#1^\ast} 
\newcommand{\push}[1]{#1_\ast}
\newcommand{\base}[1]{{\frac{\partial}{\partial{#1}}}} 
\newcommand{\ph}[1]{\frac{\partial H}{\partial #1}}
\newcommand{\co}[1]{\textit{#1}}
\newcommand{\gr}[1]{\textbf{#1}}
\newcommand{\upp}[1]{\uppercase{#1}}
\newcommand{\id}{\operatorname{id}}
\newtheorem{prop}{Proposition}[section]
    \newtheorem{thm}{Theorem}
    \newtheorem*{thm*}{Theorem}
    \newtheorem*{cor*}{Corollary}
    \newtheorem{cor}{Corollary}
    \newtheorem{lemma}{Lemma}
    \theoremstyle{remark}
\newtheorem{rmk}{Remark}[section]
\theoremstyle{definition}
\newtheorem{defn}{Definition}
\numberwithin{equation}{section}
\numberwithin{thm}{section}
\numberwithin{defn}{section}
\numberwithin{prop}{section}
\numberwithin{cor}{section}
\numberwithin{rmk}{section}
\newcounter{tmp}
\begin{document}
\author{Jessica Elisa Massetti}\address{Universit\'e  Paris-Dauphine, CEREMADE, Place du Mar\'echal de Lattre de Tassigny, 75775 Paris Cedex 16, France\\ 
\& Astronomie et Systèmes Dynamiques, IMCCE (UMR 8028) - Observatoire de Paris, 77 Av. Denfert Rochereau 75014 Paris, France  \\
              Tel.: +33 140512024\\
              Fax: +33 140512055
              }
 
 \title[Normal forms \& Diophantine tori]{Normal forms for perturbations of systems possessing a Diophantine invariant torus
}
\email{jessica.massetti@obspm.fr}
\date{May, 17th 2016}

\comment{\author{Jessica Elisa Massetti\inst{1}\fnmsep\inst{2}}
\institute{Astronomy and Dynamical Systems, IMCCE (UMR 8028) - Observatoire de Paris, 77 Av. Denfert Rochereau 75014 Paris, France  \\
              Tel.: +33 140512024\\
              Fax: +33 140512055\\ 
              \and Universit\'e  Paris-Dauphine, CEREMADE, Place du Mar\'echal de Lattre de Tassigny, 75775 Paris Cedex 16, France\\
              \email: {jessica.massetti@obspm.fr}    }}

\begin{abstract}
In $1967$ Moser proved the existence of a normal form for real analytic perturbations of vector fields possessing a reducible Diophantine invariant quasi-periodic torus. In this paper we present a proof of existence of this normal form based on an abstract inverse function theorem in analytic class. The given geometrization of the proof can be opportunely adapted accordingly to the specificity of systems under study. In this more conceptual frame, it becomes natural to show the existence of new remarkable normal forms, and provide several translated-torus theorems or twisted-torus theorems for systems issued from dissipative generalizations of Hamiltonian Mechanics, thus providing generalizations of celebrated theorems of Herman and R\"ussmann, with applications to Celestial Mechanics.

\comment{Some specific problems of Celestial Mechanics fit well in these normal forms for which a KAM-type result can be given by direct application a technique of elimination of parameters. \\

As a  byproduct, an application, by a technique of elimination of parameters set up by R\"ussmann, Herman and other authors in the 80's, we prove a KAM-type result for the dissipative spin-orbit problem of Celestial Mechanics, a problem recently presented by Celletti-Chierchia \cite{Celletti-Chierchia:2009} and Stefanelli-Locatelli \cite{Locatelli-Stefanelli:2012}.  }

\end{abstract}
\maketitle
\setcounter{tocdepth}{1} 
\tableofcontents

\section{Introduction}
\label{intro} 

\subsection{Moser's normal form}
The starting point of this article is Moser's 1967 theorem \cite{Moser:1967} which, although has been used by various authors, has remained relatively unnoticed for several years. We will present an alternative proof of this reult, relying on a more geometrical and conceptual construction that will serve as inspiration to prove new normal forms theorems that carry the seeds of further results around the persistence of Diophantine tori. \\ Although the difficulties to overcome in this proof are the same as in the original one (proving the fast convergence of a Newton-like scheme), it relies on a relatively general inverse function theorem - theorem \ref{teorema inversa} - (unlike in Moser's approach), following an alternative strategy with respect to the one proposed by Zehnder in \cite{Zehnder:1975,Zehnder:1976}. Recently Wagener in \cite{Wagener:2010} generalized the theorem to vector fields of different kind of regularity, focusing on possible applications in the context of bifurcation theory. We focus here on the analytic category.\\  We introduce here Moser's result so to define the frame in which we will state our theorems.\smallskip \\
Let $\V$ be the space of germs of real analytic vector fields along $\T^n\times\set{0}$ in $\T^n\times\R^m$. Let us \co{fix} $\alpha\in\R^n$ and $A\in\Mat_m(\R)$ a diagonalizable matrix of eigenvalues $a_1,\ldots,a_m\in\C^m$. The focus of our interest is on the affine subspace of $\V$ consisting of vector fields of the form 
\begin{equation}
u(\teta,r)= (\alpha + O(r), A\cdot r + O(r^2)),
\label{u}
\end{equation}
where $O(r^k)$ stands for terms of order $\geq k$ which may depend on $\teta$ as well. We will denote this subset with $\U(\alpha,A)$.\\
Vector fields in $\U(\alpha,A)$ possess a reducible invariant quasi-periodic torus $\text{T}^n_0 := \T^n\times\set{0}$ of Floquet exponents $a_1,\ldots,a_m$. \\ We will refer to $\alpha_1,\ldots,\alpha_n,a_1,\ldots,a_m$  as the \co{characteristic numbers} or characteristic \co{frequencies}.

Let $\Lambda$ be the subspace of $\V$ of constant vector fields of the form \[\lambda(\teta,r) = (\beta, b + B\cdot r),\] where $\beta\in\R^n, b\in\R^m, B\in\Mat_m(\R): A\cdot b = 0,\, [A,B]=0.$\\ In the following we will refer to $\lambda$ as \co{(external) parameters} or \co{counter terms}.\footnote{Conditions $[A,B]= 0$ and $A\cdot b = 0$ guarantee the uniqueness of such counter terms.} 

Eventually, let $\G$ be the space of germs of real analytic isomorphisms of $\T^n\times\R^m$ of the form \[g(\teta,r)= (\varphi(\teta), R_0(\teta) + R_1(\teta)\cdot r),\] $\varphi$ being a diffeomorphism of the torus fixing the origin and $R_0, R_1$ being respectively an $\R^m$-valued and $\Mat_m(\R)$-valued functions defined on $\T^n$.

We assume that among the linear combinations \[i\,k\cdot\alpha + l\cdot a\qquad (k,l)\in\Z^n\times\Z^m,\,\abs{l}\leq 2,\quad\abs{l}=\abs{l_1} +\ldots +\abs{l_m}\] where $a= (a_1,\ldots,a_m)$, there are only finitely many which vanish. Moreover, to avoid resonances and small divisors, we impose the following Diophantine condition on $\alpha\in\R^n$ and the eigenvalues $(\bar a, 0):= (a_1,\ldots,a_{\mu}, 0,\ldots,0)\in(\C^{\ast})^{\mu}\times \C^{m-\mu}$, for some real positive $\gamma,\tau$, 
\begin{equation}
\abs{\imath k\cdot\alpha + l\cdot \bar a}\geq \frac{\gamma}{\pa{1 + \abs{k}}^\tau}\qquad\text{for all } (k,l)\in\Z^n\times\Z^{\mu}\setminus \set{(0,0)},\, \abs{l}\leq 2.
\label{DC}
\end{equation}
It is a known fact that if $\tau$ is large enough and $\gamma$ small enough, the measure of the set of "good frequencies" tends to the full measure as $\gamma$ tends to $0$. See \cite{Poschel:1989,Poschel:2001} and references therein. Also, remark that only purely imaginary parts of 
Floquet exponents may interfere and create small divisors, due to the factor $\imath$ in front of $k\cdot\alpha$. We will indicate with $\mathcal{D}_{\gamma,\tau}$ the set of characteristic numbers satisfying such a condition.

\begin{thm}[Moser 1967]If $v\in\V$ is close enough to $u^0\in\U(\alpha,A)$ there exists a unique triplet $(g,u,\lambda)\in\G\times\U(\alpha,A)\times\Lambda$, in the neighborhood of $(\id,u^0,0),$ such that $v = \push{g}u + \lambda$.
\label{Moser theorem}
\end{thm}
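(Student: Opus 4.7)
The plan is to reformulate the theorem as the local inversion of the map
\[\Phi: \G\times\U(\alpha,A)\times\Lambda \to \V, \qquad \Phi(g, u, \lambda) = \push{g}u + \lambda,\]
which sends $(\id, u^0, 0)$ to $u^0$. Existence and uniqueness of the triplet for $v$ near $u^0$ will then follow from the abstract analytic inverse function theorem \ref{teorema inversa}, provided one exhibits a bounded right inverse of the linearization $D\Phi(\id, u^0, 0)$ with the tame estimates required by that statement.

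To compute this linearization, I would plug in an isotopy $g_t = \exp(t\xi)$ with $\xi = (\phi(\teta), R_0(\teta) + R_1(\teta)\,r)$ tangent to $\G$ at the identity, a variation $\delta u$ tangent to $\U(\alpha,A)$, and $\delta\lambda = (\beta, b + B\,r)$ with $A\cdot b = 0$, $[A,B] = 0$, and differentiate at $t = 0$. Since $\tfrac{d}{dt}|_{t=0}\push{g_t}u^0 = [u^0,\xi]$, the linearized equation reads $[u^0,\xi] + \delta u + \delta\lambda = w$ for any target $w \in \V$. Because $u^0 = (\alpha + O(r), A\cdot r + O(r^2))$, the variation $\delta u$ absorbs every term of order $\geq 1$ in $r$ in the $\T^n$-component and every term of order $\geq 2$ in the $\R^m$-component. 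Matching the remaining lowest-order Taylor coefficients in $r$ reduces the problem to three cohomological equations on $\T^n$:
\[\partial_\alpha \phi + \beta = w^\teta(\teta,0), \qquad \partial_\alpha R_0 - A R_0 + b = w^r(\teta,0), \qquad \partial_\alpha R_1 + [R_1, A] + B = \partial_r w^r(\teta, 0),\]
to be solved for $(\phi,\beta)$, $(R_0, b)$, and $(R_1, B)$ respectively.

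Each of these has the form $(\partial_\alpha + \mathcal{M})X + C = Y$ with $\mathcal{M}$ equal to $0$, $-A$, or $\operatorname{ad}(-A)$. Passing to Fourier series in $\teta$ and projecting on eigenspaces of $\mathcal{M}$, the $k$-th mode is divided by $\imath k\cdot\alpha + l\cdot a$ with $|l|\leq 1$ for the first two equations and $|l|\leq 2$ for the matrix equation, so the Diophantine condition \eqref{DC} provides the required lower bound on every non-resonant mode. The counter-term $C$ is chosen to kill the finitely many vanishing combinations ($k=0$ together with $l\cdot a = 0$), and the algebraic constraints $A\cdot b = 0$ and $[A,B] = 0$ are exactly what makes this choice unique. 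Recombining the modes yields a right inverse on a scale of spaces of functions holomorphic on complex neighbourhoods of $\T^n\times\set{0}$, at the standard cost of a shrinkage of the analyticity radius.

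The main difficulty is therefore analytic rather than conceptual: one must show that the right inverse so constructed, together with its dependence on the base point $(g, u, \lambda)$, satisfies the quantitative hypotheses of \ref{teorema inversa} on such an analytic scale, so that the internal Newton iteration of the abstract theorem converges. Once this is established, the theorem automatically produces the unique triplet $(g, u, \lambda)$ close to $(\id, u^0, 0)$ with $\push{g}u + \lambda = v$, the Diophantine hypothesis being used only at the level of the cohomological equations above.
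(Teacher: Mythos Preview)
Your overall plan matches the paper's: define $\phi(g,u,\lambda)=\push{g}u+\lambda$ and invert it locally via the abstract theorem \ref{teorema inversa}. The cohomological equations you write down are essentially those of Lemmata \ref{tangent lemma}--\ref{matrix lemma}. There is, however, a genuine gap.

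Theorem \ref{teorema inversa} does not ask for a right inverse of $\phi'$ at the single point $(\id,u^0,0)$; it requires $\phi'(x)^{-1}$ to exist, with uniform tame bounds, for \emph{every} $x=(g,u,\lambda)$ in a ball. You compute only $D\Phi(\id,u^0,0)$ and then defer ``dependence on the base point'' to a sentence. This is precisely the nontrivial step. At a general $x$ the linearized equation reads
\[
[\push{g}u,\ \delta g\circ g^{-1}]+\push{g}\delta u+\delta\lambda=\delta v,
\]
and the bracket now involves the $g$-dependent vector field $\push{g}u$, which need not lie in $\U(\alpha,A)$; attempting to solve directly in Fourier would confront you with a possibly resonant operator (this is exactly the obstacle Zehnder meets, cf.\ the discussion after Theorem \ref{Moser theorem}). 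The paper's device is to \emph{pull back} the equation by $g$: using naturality of the Lie bracket one gets $[u,\dot g]+\delta u+\pull{g}\delta\lambda=\pull{g}\delta v$ with $\dot g=g'^{-1}\!\cdot\delta g$. Now $u$ itself is in $\U(\alpha,A)$, so the cohomological system is again triangular with small divisors governed by \eqref{DC}, at any base point; the only price is that $\pull{g}\delta\lambda$ is no longer constant, and one closes the argument by a finite-dimensional implicit-function step for $\delta\lambda$ (Proposition \ref{proposition infinitesimal}).

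A secondary point: even at the identity your equations are missing the coupling terms coming from the higher-order part of $u^0\in\U(\alpha,A)$. In the paper's notation, the tangential equation carries $-u_1\!\cdot\!\dot R_0$ and the matrix equation carries $\dot R_0'\!\cdot\!u_1-2U_2\!\cdot\!\dot R_0$; these make the system genuinely triangular (one must solve for $\dot R_0$ first) rather than decoupled. They do not change the small-divisor analysis, but they must be tracked for the estimates to close.
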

The notation $\push{g}u$ indicates the push-forward of $u$ by $g$: $\push{g}u= (g'\cdot u)\circ g^{-1}.$
\begin{figure}
\begin{picture}(0,0)%
\includegraphics{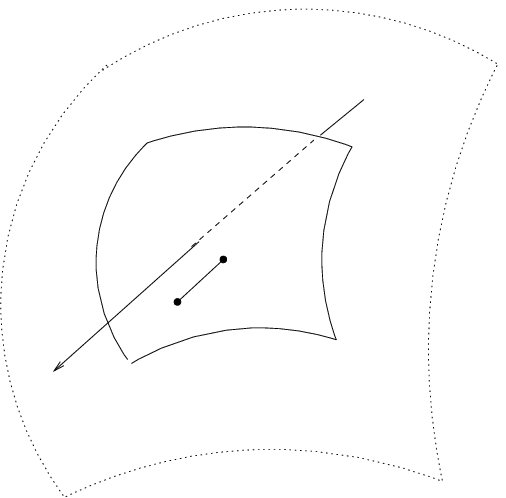}%
\end{picture}%
\setlength{\unitlength}{1657sp}%
\begingroup\makeatletter\ifx\SetFigFont\undefined%
\gdef\SetFigFont#1#2#3#4#5{%
  \reset@font\fontsize{#1}{#2pt}%
  \fontfamily{#3}\fontseries{#4}\fontshape{#5}%
  \selectfont}%
\fi\endgroup%
\begin{picture}(5697,5596)(4167,-5919)
\put(4501,-4786){\makebox(0,0)[lb]{\smash{{\SetFigFont{5}{6.0}{\familydefault}{\mddefault}{\updefault}{\color[rgb]{0,0,0}$\Lambda\equiv \mathbb{R}^N$}%
}}}}
\put(5806,-2266){\makebox(0,0)[lb]{\smash{{\SetFigFont{5}{6.0}{\familydefault}{\mddefault}{\updefault}{\color[rgb]{0,0,0}$\mathcal{G}_{\ast}\mathcal{U}(\alpha,A)$}%
}}}}
\put(8956,-871){\makebox(0,0)[lb]{\smash{{\SetFigFont{5}{6.0}{\familydefault}{\mddefault}{\updefault}{\color[rgb]{0,0,0}$\mathcal{V}$}%
}}}}
\put(6841,-3166){\makebox(0,0)[lb]{\smash{{\SetFigFont{5}{6.0}{\familydefault}{\mddefault}{\updefault}{\color[rgb]{0,0,0}$g_{\ast}u$}%
}}}}
\put(6346,-3796){\makebox(0,0)[lb]{\smash{{\SetFigFont{5}{6.0}{\familydefault}{\mddefault}{\updefault}{\color[rgb]{0,0,0}$g_{\ast}u + \lambda$}%
}}}}
   
\end{picture}
\label{fig:1}  
\caption{Geometrical interpretation of Moser's theorem.} 
\end{figure}
\\
The introduction of the parameter $\lambda\in\Lambda$ is a powerful trick that switches the frequency obstruction (obstruction to the conjugacy to the initial dynamics) from one side of the conjugacy to the other. Although the presence of the counter-term $\lambda = (\beta, b + B\cdot r)$ breaks the dynamical conjugacy down, it is a \co{finite dimensional} obstruction: geometrically, the $\G$-orbits of all $u's$ in $\U(\alpha,A)$ form in $\V$ a submanifold of finite co-dimension $ N\leq n+ m + m^2$, transverse to $\Lambda$. This co-dimension will depend on the dimension of $\beta\in\R^n$ and the one of the kernels of $A$ and $[A,\cdot]$.\\
Zhender's approach and ours differ for the following reason, although both rely on the fact that the convergence of the Newton scheme is somewhat independent of the internal structure of variables.\\ Inverting the operator \[\phi: (g,u,\lambda)\mapsto \push{g}u + \lambda = v,\] as we will in section \ref{the normal form of Moser}, is equivalent to solving implicitly the pulled-back equation ($\pull{g} = \push{g^{-1}}$) \[\Phi(g,u,\lambda;v) = g^{\ast}(v - \lambda) - u=0,\] with respect to $u, g$ and $\lambda$, as Zehnder did. \\The problem is that whereas $\phi$ is a local diffeomorphism (in the sense of scales of Banach spaces), the linearization of $\Phi$,  $$ \frac{\partial\Phi}{\partial (g,u,\lambda)}(g,u,\lambda;v)\cdot (\delta g, \delta u,\delta\lambda) = \sq{\pull{g}( \lambda - v), g'^{-1}\cdot\delta g} + \pull{g}\delta\lambda + \delta u$$ is not invertible if for instance $\pull{g}(\lambda - v)$ is a resonant vector field. It is invertible in a whole neighborhood of $\Phi = 0$ only up to a second order term (see Zehnder \cite[\S $5$]{Zehnder:1975}), which prevents us from using a Newton scheme in a straightforward manner.
In section \ref{the normal form of Moser} we give the functional setting in which we prove the theorem of Moser.

\subsection{Persistence of tori: elimination of parameters}

 The fact that the submanifold $\G_{\ast}\U(\alpha,A)$ has finite co-dimension leaves the possibility that in some cases obstructions represented by counter terms can be even totally eliminated: if the system depends on a sufficient number of free parameters - either internal or external parameters - and $\lambda$ smoothly depends on them we can try to tune the parameters so that $\lambda = 0$.\\ When $\lambda = 0$ we have $g_{\ast} u = v$: the image $g(\normal{T}^n_0)$ is invariant for $v$ and $u$ determines the first order dynamics along this torus.\\ When $\push{g}u + \lambda = v$, we will loosely say that
 \begin{itemize}[leftmargin=*]
 \item $\text{T}^n_0$ persists \emph{up to twist}, if $b = 0$ and $B=0$  
 \item $\text{T}^n_0$ persists \emph{up to translation}, if $\beta=0$ and $B=0$
 \item $\text{T}^n_0$ persists \emph{up to twist-translation}, if $B=0$.
  \end{itemize}
  The \co{infinite} dimensional conjugacy problem is reduced to a \co{finite} dimensional one.  
   In some cases the crucial point is to allow frequencies $(\alpha_1,\ldots,\alpha_n,a_1,\ldots,a_m)$ to vary, using the fact that $\lambda$ is Whitney-smooth with respect to them. Herman understood the power of this reduction in the $80'$s (see \cite{Sevryuk:1999}) and other authors (R\"ussmann, Sevryuk, Chenciner, Broer-Huitema-Takens, Féjoz...) adopted this technique of "elimination of parameters" to prove invariant tori theorems in multiple contexts, at various level of generality, contributing to clarify this procedure. See \cite{Broer-Huitema-Takens:1990, Chenciner:1985, Chenciner:1985a, Sevryuk:1999, Sevryuk:2006} at instance. 
   \comment{ In the case of interest to us (proving the persistence of an invariant attractive torus in the spin-orbit problem, see section below) we will have at our disposal an external vector of free parameters, and we will not need to vary the characteristic frequencies $(\alpha,a)$ to eliminate the obstruction $\lambda$.}
\subsection{Main results}   

The proposed geometrization of Moser's result arises different questions about the equivariance of the correction with respect to the groupoid $\G$ and its canonical sub-groupoids. In section \ref{The Hamiltonian case: Herman's theorem} and \ref{Hamiltonian-dissipative section} we study some of these equivariance properties in some particular cases issued from Hamiltonian dynamics and its dissipatives versions issued from Celestial Mechanics. As a by-product, several twisted-torus and translated-torus theorems are given (see section \ref{general Russmann}).

\comment{ The motivation of this work comes from Celestial Mechanics. One of the main interests of Celestial Mechanics is to understand the dynamics of the revolution of a body (or many bodies) around a fixed massive point or the rotation's dynamics of such a body around its spin-axis. Since the internal structure of planets and satellites can influence such motions, depending on its elastic or non elastic response to the gravitational attraction, it is of major importance to understand the behavior of a planet or satellite subjected to non conservative forces (tide's effects in particular). The first attempt in this direction, is to consider systems in which the dissipative effect translates in the presence of a linear friction (whose precise characteristics is very difficult to determine) in the equations of motions, while the remaining interactions are still Hamiltonian. }
\subsubsection{Hamiltonian-dissipative systems}

In the preceeding line of thought we start by recalling the classic Hamiltonian counter part of Moser's theorem (see section \ref{The Hamiltonian case: Herman's theorem}). \\ On $\T^n\times\R^n$ , if $\Uh(\alpha,0)\subset \U(\alpha,A)$ is the space (of germs) of Hamiltonian vector fields of the form \eqref{u} (hence $\alpha$ is Diophantine and $A=0$), contained in the space $\Vh\subset \V$ of Hamiltonian vector fields, and if $\G^{\Ham}\subset \G$ is the space of germs of exact-symplectic isomorphisms of the form $$g(\teta,r)= (\varphi(\teta), \,^t\varphi'(\teta)^{-1}\cdot(r + S'(\teta))),$$ where $\varphi$ is an isomorphism of $\T^n$ fixing the origin and $S$ a function on $\T^n$ fixing the origin, the space of counter terms is reduced to the set of $\lambda = (\beta,0)$: we have Herman's "twisted conjugacy" theorem, see\cite{Fejoz:2004, Fejoz:2010, Fejoz:2015}.

\begin{thm*}[Herman] If $v$ is sufficiently close to $u^0\in\Uh(\alpha,0)$, the torus $\normal{T}^n_0$ persists up to twist. \comment{More specifically, there exists a unique triplet $(g,u,\beta)\in\G^{\Ham}\times \Uh(\alpha,0)\times\Lambda(\beta)$, in the neighborhood of $(\id,u^0,0)$, such that $\push{g}u + \beta= v$.}
\end{thm*}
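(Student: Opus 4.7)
The plan is to mimic the proof of Theorem \ref{Moser theorem}, restricting the groupoid to $\G^{\Ham}$ and the space of counter-terms to the Hamiltonian subspace $\Lambda^{\Ham} \subset \Lambda$. The preliminary structural observation is that a constant vector field $\lambda = (\beta, b + B \cdot r) \in \Lambda$ is the Hamiltonian vector field of some function $K$ on $\T^n \times \R^n$ only if $b = 0$ and $B = 0$: the condition $-\partial_\teta K = b + B \cdot r$ would force $K$ to depend linearly, resp.\ bilinearly, on $\teta$, which is incompatible with periodicity. Thus Moser's $(n + m + m^2)$-parameter family of counter-terms collapses, in the Hamiltonian category, to the $n$-parameter family $\Lambda^{\Ham} = \{(\beta, 0) : \beta \in \R^n\}$, and the expected normal form is precisely a persistence \emph{up to twist}.

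Next, I would apply Theorem \ref{teorema inversa} to
\[
\phi^{\Ham} : (g, u, \beta) \mapsto \push{g} u + (\beta, 0),
\]
defined on a neighborhood of $(\id, u^0, 0)$ in $\G^{\Ham} \times \Uh(\alpha, 0) \times \R^n$ with values in $\Vh$. Its linearization at $(\id, u^0, 0)$ reads $(X, \delta u, \delta \beta) \mapsto -[X, u^0] + \delta u + (\delta \beta, 0)$. Writing $X \in T_{\id} \G^{\Ham}$ in terms of an infinitesimal torus diffeomorphism $\xi : \T^n \to \R^n$ and an infinitesimal generating function $\sigma : \T^n \to \R$ (both fixing the origin), and restricting to $r = 0$ (where tangent vectors in $T_{u^0} \Uh(\alpha, 0)$ vanish to the required order), the inversion reduces to the two cohomological equations
\[
\partial_\alpha \xi = \delta \beta - \hat v_1\big|_{r = 0}, \qquad \partial_\alpha \sigma' = -\hat v_2\big|_{r = 0},
\]
with $\hat v = v - u^0$. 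The first fixes $\delta \beta$ as the $\T^n$-mean of $\hat v_1|_{r = 0}$ and inverts against the Diophantine divisors $i k \cdot \alpha$ from \eqref{DC}; the second is automatically of vanishing mean on each component because $\hat v_2|_{r = 0}$ is minus a $\teta$-derivative of a Hamiltonian, and is inverted in the same way. The Taylor coefficients of order $\geq 1$ in $r$ are then absorbed by the free choice of $\delta u \in T_{u^0} \Uh(\alpha, 0)$.

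The main obstacle, exactly as in the proof of Theorem \ref{Moser theorem}, is not the cohomological inversion itself but rather the tame control of the loss of analyticity at each step and the verification that all corrections produced by the Newton scheme remain in the Hamiltonian categories: $X$ must stay symplectic, $\delta u$ must preserve the shape \eqref{u} with $A = 0$, and the counter-term must stay in $\R^n \times \{0\}$. This equivariance with respect to the action of $\G^{\Ham}$ reduces to the structural fact that push-forwards by exact-symplectic maps preserve Hamiltonicity, so that $\phi^{\Ham}$ indeed maps $\G^{\Ham} \times \Uh(\alpha, 0) \times \R^n$ into $\Vh$. Granted these two points, Theorem \ref{teorema inversa} delivers the unique triplet $(g, u, \beta)$ in a neighborhood of $(\id, u^0, 0)$ with $\push{g} u + (\beta, 0) = v$.
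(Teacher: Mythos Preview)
Your approach is the same as the paper's --- apply the abstract inverse function theorem~\ref{teorema inversa} to the Hamiltonian restriction $\phi^{\Ham}$ --- and the paper does not in fact give a self-contained proof of Herman's theorem but refers to \cite{Fejoz:2010} and proves instead the dissipative generalization (Theorem~\ref{theorem Herman}), whose $\eta=0$ case is exactly this statement. So strategically you are on the right track. Two points, however, need correction.

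First, and most importantly: Theorem~\ref{teorema inversa} requires a right inverse for $\phi'(x)$ at \emph{every} $x=(g,u,\beta)$ in a neighborhood of $(\id,u^0,0)$, not just at the base point. You only compute the linearization at $(\id,u^0,0)$. The paper's mechanism for handling this (see Proposition~\ref{proposition infinitesimal} and its Hamiltonian analogue Proposition~\ref{proposition infinitesimal ham}) is to pull back the linearized equation by $g$, turning it into $[u,\dot g]+\delta u+\dot\lambda=\dot v$ along the standard torus $\mathrm{T}^n_0$ rather than along $g(\mathrm{T}^n_0)$. For this reduction to stay within the Hamiltonian category one needs the structural Lemma~\ref{lemma Hamiltonian2} (with $\eta=0$): the pull-back of a Hamiltonian vector field by $g\in\G^{\Ham}$ is again Hamiltonian. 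Your final paragraph gestures at equivariance but does not supply this step.

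Second, your cohomological system is oversimplified. Even at the identity, the tangential equation picks up a coupling term from the $O(r)$ part of $u^0$: writing $u^0=(\alpha+u_1(\theta)\cdot r+O(r^2),\,O(r^2))$, the correct pair (with $\eta=0$ in the display of Proposition~\ref{proposition infinitesimal ham}) is
\[
\dot\varphi'\cdot\alpha - u_1\cdot d\dot S = \dot v_0 - \delta\beta,\qquad d\dot S'\cdot\alpha = \dot V_0,
\]
which is \emph{triangular}: one solves the normal equation for $d\dot S$ first (zero-average because $\dot V_0$ is an exact $\theta$-derivative, as you correctly note), then substitutes into the tangential equation, fixes $\delta\beta$ as the average of $\dot v_0 + u_1\cdot d\dot S$, and solves for $\dot\varphi$. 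Your version drops the $u_1\cdot d\dot S$ term and presents the two equations as decoupled, which obscures the order of resolution and the actual source of the loss exponent $\tau'$.
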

 
 \begin{itemize}[label=$\bullet$,leftmargin=*]

\item In section \ref{Hamiltonian-dissipative section} we prove a first dissipative-generalization of this classic result by considering the affine spaces\footnote{We noted $\partial_r = (\partial_{r_1},\ldots,\partial_{r_n})$ and omitted the tensor product sign $r\tensor\partial_r$} 
$$\Uh(\alpha,-\eta):=\Uh(\alpha,0)\oplus (-\eta  r\,\partial_r)\subset\Vh\oplus (-\eta  r\,\partial_r),$$ 
where $\eta\in\R^{\ast}$, extending the normal direction with the constant linear term $\eta r$  (when $\eta>0$ we speak of "radial dissipation"), but keeping the same space of exact-symplectic isomorphisms $\G^{\Ham}$ and Hamiltonian corrections $\lambda = (\beta,0)$.

 \begingroup
\setcounter{tmp}{\value{thm}}
\setcounter{thm}{0} 
\renewcommand\thethm{\Alph{thm}}

\begin{thm}
 If $v$ is sufficiently close to $u^0\in\Uh(\alpha,-\eta)$, for any $\eta\in [-\eta_0,\eta_0]$, $\eta_0\in\R^+$, the torus $\normal{T}^n_0$ persists up to twist.\comment{\\ More specifically there exists a unique triplet $(g,u,\beta)\in\G^{\Ham}\times \Uh(\alpha,-\eta)\times\Lambda(\beta)$,  in the neighborhood of $(\id,u^0,0)$, such that $\push{g}u + \beta= v$.}
\label{theorem A}
\end{thm}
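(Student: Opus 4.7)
My plan is to mimic the proof of Moser's theorem given in section \ref{the normal form of Moser}, specialized to the Hamiltonian-dissipative affine space. Consider the map
\[
\phi:\ \G^{\Ham}\times \Uh(\alpha,-\eta)\times\R^n\ \longrightarrow\ \Vh\oplus(-\eta r\,\partial_r),\qquad (g,u,\beta)\longmapsto \push{g}u+(\beta,0),
\]
and seek to solve $\phi(g,u,\beta)=v$ in a neighborhood of $(\id,u^0,0)$ by applying the abstract inverse function theorem (Theorem \ref{teorema inversa}). The heart of the matter is the invertibility of the linearization $D\phi(\id,u^0,0)$ along appropriate scales of analytic Banach spaces, together with the tame estimates required by the Newton scheme.

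The linearized equation reads
\[
L_{\hat g}u^0+\hat u+(\hat\beta,0)=\delta v,
\]
where $\hat g$ is tangent to $\G^{\Ham}$ at the identity (so determined by a vector field on $\T^n$ fixing $0$ and a generating function), $\hat u$ is tangent to $\Uh(\alpha,-\eta)$ at $u^0$ (i.e.\ a Hamiltonian perturbation vanishing to the prescribed order on $\mathrm{T}^n_0$), and $\hat\beta\in\R^n$. Writing $u^0=\tilde u^0-\eta r\,\partial_r$ with $\tilde u^0\in\Uh(\alpha,0)$, one splits
\[
L_{\hat g}u^0=L_{\hat g}\tilde u^0-\eta\,L_{\hat g}(r\,\partial_r),
\]
so that the leading piece is exactly the cohomological operator inverted in the proof of Herman's twisted-conjugacy theorem (section \ref{The Hamiltonian case: Herman's theorem}), while the second piece is a linear correction of order $\eta$.

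Concerning small divisors: here $A=-\eta I_n$, so all Floquet eigenvalues coincide with the real number $-\eta$. The Diophantine condition \eqref{DC} then reduces to a condition on $\alpha$ alone, because for $l\neq 0$ the real part $-l\,\eta$ keeps $|\imath k\cdot\alpha+l\cdot\bar a|$ bounded below independently of $k$, while for $l=0$ we recover the usual inequality on $k\cdot\alpha$. The resulting estimates are uniform for $\eta\in[-\eta_0,\eta_0]$ and degenerate smoothly into Herman's setting at $\eta=0$, so the same Newton scheme handles the whole range of $\eta$.

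The main obstacle, and the place where this statement goes beyond Herman's theorem, is to verify the \emph{equivariance} that legitimizes restricting the counter-term to $(\beta,0)$ rather than the full Moser counter-term $(\beta,b+B\cdot r)$. Concretely, one must check that for $g\in\G^{\Ham}$ close to the identity, the mismatch $\push{g}(-\eta r\,\partial_r)-(-\eta r\,\partial_r)$ differs from a Hamiltonian vector field only by an admissible translation in $\R^n$, so that the decomposition $u=\tilde u-\eta r\,\partial_r$ is preserved by $\G^{\Ham}$ modulo the counter-term and a reabsorbable perturbation of $\tilde u$. Once this compatibility is established, the linearized problem decouples into the Hamiltonian cohomological equation already solved for Herman's theorem plus a tame $O(\eta)$ perturbation, and the quadratic convergence of the Newton iteration follows, as in the proof of Theorem \ref{Moser theorem}, from the general inverse function theorem.
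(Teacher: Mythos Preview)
Your overall strategy matches the paper's: set up the normal-form operator $\phi$, prove $\phi'$ is invertible with tame estimates, and invoke the abstract inverse function theorem. However, there are two concrete gaps in the execution.

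First, the equivariance. You assert that $\push{g}(-\eta r\,\partial_r)-(-\eta r\,\partial_r)$ differs from a Hamiltonian vector field ``only by an admissible translation in $\R^n$''. For $g\in\G^{\Ham}$ (exact-symplectic) the situation is in fact sharper: the paper's Lemma \ref{lemma Hamiltonian1} shows that $\push{g}v$ is again exactly of the form (Hamiltonian)$\oplus(-\eta r\,\partial_r)$, with the twisted Hamiltonian $\hat H=H\circ g^{-1}-\eta\,(S\circ\varphi^{-1})$. No translation term appears; the translation belongs to the non-exact symplectic case of Theorem \ref{theorem B}. This lemma is the structural heart of the proof, and it is precisely where the hypothesis that $A=-\eta\,\id$ is a \emph{homothety} is used: even a diagonal $A$ with distinct entries would destroy the Hamiltonian structure under $\G^{\Ham}$.

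Second, and more seriously, your argument for uniformity in $\eta$ is incorrect. You claim that for $l\neq 0$ the real part $-l\eta$ bounds the divisor $|ik\cdot\alpha+l\cdot\bar a|$ from below; but that lower bound is proportional to $|\eta|$ and vanishes as $\eta\to 0$, so it cannot yield estimates uniform on $[-\eta_0,\eta_0]$. The paper's mechanism is different (Remark \ref{remark uniform}). After pulling back, the cohomological equation in the normal direction reads
\[
d\dot S'\cdot\alpha+\eta\,d\dot S=\dot V_0^{H}-\dot\Lambda_0,
\]
and since $d\dot S$ is an \emph{exact} $1$-form on $\T^n$ it has zero average: the Fourier mode $k=0$ is absent, so every remaining divisor satisfies $|ik\cdot\alpha+\eta|\geq |k\cdot\alpha|$. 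The uniformity in $\eta$ therefore comes from the Hamiltonian/exact-symplectic structure of $\G^{\Ham}$, not from the real part of the Floquet exponents. Relatedly, the linearized problem does not ``decouple into the Hamiltonian cohomological equation plus a tame $O(\eta)$ perturbation'': the operator acting on $d\dot S$ is genuinely $L_\alpha+\eta$, and one inverts it directly on zero-average functions. A perturbative treatment starting from $L_\alpha^{-1}$ would require $\eta$ small, which defeats the purpose of the theorem.

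Finally, note that for the Newton scheme you need to invert $\phi'$ at every nearby $(g,u,\beta)$, not only at $(\id,u^0,0)$; the paper handles this by pulling back by $g$ (Proposition \ref{proposition infinitesimal ham}), which reduces the equation to one on the fixed torus $\normal T^n_0$ with $u$ in its straightened form.
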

\endgroup

Remark that, a part from the fact that the number of counter terms breaking the dynamical conjugacy is \emph{the same} as in the purely Hamiltonian context (a twisting term $\beta\,\partial_{\teta}, \, \beta\in\R^n$ in the angle's direction), we control both the tangent and the normal dynamics of the torus, which survive perturbations (up to twist) uniformly with respect to dissipation (as opposed to the classic normally hyperbolic frame). See remark \ref{remark uniform} in the proof of Proposition \ref{proposition infinitesimal ham}.  \smallskip\\
If we make abstraction of the geometry, this study can be included in the general Moser's theorem in the case $A$ has simple non $0$ eigenvalues, where the corrections space is immediately given by the set of $\lambda = (\beta, B\cdot r),$ with $B$ a diagonal matrix. \\ A diagram of inclusions summarizing these results is given at the end of section \ref{Hamiltonian-dissipative section}.
 \item If vector fields in $\Uh(\alpha,-\eta)$ satisfy a \emph{torsion hypotesis} (coming from Hamiltonians with non degenerate quadratic term), it is possible to widen the space of perturbations to $$ \Vh\oplus \pa{-\eta r + \zeta}\partial_r,$$ where $\zeta\in\R^n$, keeping the same space $\Uh(\alpha,-\eta)$, provided that the space of transformations is extended to the space $\G^{\omega}$ of symplectic isomorphisms of the form $$g(\teta,r)= (\varphi(\teta), \,^t\varphi'(\teta)^{-1}\cdot(r + S'(\teta) + \xi)),\quad \xi\in\R^n.$$
 The space of counter terms becomes the set of translations \co{in action} $\lambda = (0, b)$. 
  \begingroup
\setcounter{tmp}{\value{thm}}
\setcounter{thm}{1} 
\renewcommand\thethm{\Alph{thm}}

\begin{thm}[vector fields à la R\"ussmann]
If $v$ is sufficiently close to $u^0\in\Uh(\alpha,-\eta)$, for any $\eta\in [-\eta_0,\eta_0]$, $\eta_0\in\R^+$,  the torus $\normal{T}^n_0$ persists up to translation.\comment{\\More specifically there exists a unique $(g,u,\beta)\in\G^{\omega}\times \Uh(\alpha,-\eta)\times\Lambda(b)$,  in the neighborhood of $(\id,u^0,0)$, such that $\push{g}u + b = v$.}
\label{theorem B}
\end{thm}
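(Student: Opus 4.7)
The plan is to follow the scheme of Theorem~\ref{theorem A} (itself an adaptation of the proof of Moser's Theorem~\ref{Moser theorem} sketched in Section~\ref{the normal form of Moser}) and apply the abstract inverse function theorem~\ref{teorema inversa} to the operator
\[
\phi:(g,u,b)\longmapsto \push{g}u + (0,b),
\]
regarded as a map between scales of Banach spaces of real analytic germs modelled on $\G^{\omega}\times \Uh(\alpha,-\eta)\times\R^{n}$ and on $\Vh\oplus(-\eta r+\zeta)\partial_r$ respectively. Once the analytic framework (weighted norms, nested domains of analyticity, tame estimates) is imported from Section~\ref{the normal form of Moser}, the only new point is the construction of a right inverse of the linearization $D\phi$ at a reference point $(\id,u^{0},0)$, with the usual control of the loss of analyticity.

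At this point, the linearization reads
\[
D\phi(\id,u^{0},0)\cdot(\delta g,\delta u,\delta b) = [u^{0},\delta g] + \delta u + (0,\delta b),
\]
where $\delta g$ is the infinitesimal generator of a one-parameter family in $\G^{\omega}$ and hence takes the form
\[
\delta g(\teta,r) = \bigl(\delta\varphi(\teta),\ \delta S'(\teta) + \delta\xi - {}^{t}\delta\varphi'(\teta)\cdot r\bigr),\qquad \delta\xi\in\R^{n},
\]
with $\delta\varphi$ an infinitesimal diffeomorphism of $\T^{n}$ fixing the origin and $\delta S$ a scalar function of zero mean. Writing the Hamiltonian of $u^{0}$ as $h^{0}(\teta,r) = c(\teta) + \alpha\cdot r + \tfrac{1}{2}Q(\teta)r\cdot r + O(r^{3})$ and expanding the bracket, the linearized equation $D\phi\cdot(\delta g,\delta u,\delta b) = \delta v$ splits into a standard transport part governed by $L_{\alpha} = \alpha\cdot\partial_{\teta}$ (plus the dissipation $-\eta$ along the normal direction) and a non-standard contribution $\delta\xi\cdot Q(\teta)$ coming from the free translation $\delta\xi$. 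Non-zero Fourier modes in $\teta$ are inverted the standard way: the Diophantine condition~\eqref{DC} takes care of the angular direction, while the bounded-below denominator $ik\cdot\alpha+\eta$ controls the normal direction, with estimates uniform in small $\eta$ (along the lines of Remark~\ref{remark uniform} in Proposition~\ref{proposition infinitesimal ham}).

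There remain two zero-Fourier-mode obstructions: a constant angular drift $\bar{\beta}\in\R^{n}$ (absorbed in Theorem~\ref{theorem A} by the now-forbidden counter-term $\beta$) and a constant radial drift $\bar{b}\in\R^{n}$. The second one is precisely what the new counter-term $\delta b$ is designed for. The first one is taken care of by the \emph{torsion hypothesis}: averaging $\delta\xi\cdot Q(\teta)$ over $\T^{n}$ produces $\delta\xi\cdot\langle Q\rangle$ with $\langle Q\rangle:=\int_{\T^{n}}Q(\teta)\,d\teta$ invertible, so one solves uniquely $\delta\xi = -\langle Q\rangle^{-1}\bar{\beta}$ (after accounting for the coupling with $\langle Q\,\delta S'\rangle$, which has already been fixed on non-zero modes). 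In other words, the free parameter $\xi$ of the enlarged group $\G^{\omega}$ trades, via torsion, Herman's twist obstruction $\beta$ for the translation obstruction $b$.

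The main obstacle I foresee is verifying that this $\beta\leftrightarrow\xi$ exchange is compatible with the tame estimates required by the inverse function theorem. Specifically, one has to show that $\langle Q\rangle^{-1}$ (and its analogues at each step of the Newton iteration, when the reference Hamiltonian is perturbed) remains uniformly bounded, and that all estimates are uniform in $\eta\in[-\eta_{0},\eta_{0}]$ so that the normally-hyperbolic and the purely-Hamiltonian regimes are handled simultaneously. Once this is granted, Theorem~\ref{teorema inversa} yields the unique triplet $(g,u,b)\in\G^{\omega}\times\Uh(\alpha,-\eta)\times\R^{n}$ and hence persistence of $\normal{T}^{n}_{0}$ up to translation.
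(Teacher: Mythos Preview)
Your proposal is correct and follows essentially the same route as the paper: invert $\phi'$ by solving the cohomological system, use the torsion $\langle Q\rangle$ to trade the angular obstruction for the free translation $\delta\xi$, absorb the radial obstruction in $\delta b$, and feed the resulting tame estimates (uniform in $\eta$) into the abstract inverse function theorem~\ref{teorema inversa}. The only point the paper makes more explicit is that the linearization must be inverted at a \emph{general} $(g,u,b)$ near $(\id,u^{0},0)$, which it does by pulling back along $g$ (so the torsion matrix becomes $\langle Q\cdot(\eta M+\id)\rangle$ with $M$ small, still invertible); you acknowledge this in passing, and it goes through exactly as you anticipate.
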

\endgroup
Here again, the bound on admissible perturbations $v$ is proved to be uniform with respect to $\eta$ and the first order dynamics on the translated torus $g(\normal{T}^n_0)$ is again characterized by the same frequencies $(\alpha, \eta)$. Thus, this result can be seen as a multidimensional remarkable generalization for vector fields of R\"ussmann's translated curve theorem \cite{Russmann:1970}, where the normal dynamics of the translated torus is determined too - in addition to the $\alpha$-quasiperiodic tangent one -. 
\end{itemize}
\subsubsection{General-dissipative systems}

At the expense of losing the control of the normal dynamics, we can achieve in proving the persistence - up to translation or up to twist - of $\alpha$-quasi-periodic Diophantine tori in a more general dissipative frame than the Hamiltionian-dissipative one previously considered, by application of the classic implicit function theorem (in finite dimension). The following results will be proved in section \ref{general Russmann}, where a more functional statement will be given (Theorem \ref{proposition B=0} and \ref{translated torus}).

On $\T^n\times\R^m$, let $u\in\U(\alpha,A)$, defined in expression \eqref{u}, be such that $A$ has simple, real, non $0$  eigenvalues $a_1,\ldots,a_m$.  This hypothesis of course implies that the only frequencies that can cause small divisors are the tangential ones $\alpha_1,\ldots,\alpha_n$, so that we only need to require the standard Diophantine hypothesis on $\alpha$.

  \begingroup
\setcounter{tmp}{\value{thm}}
\setcounter{thm}{2} 
\renewcommand\thethm{\Alph{thm}}

\begin{thm}[Twisted torus]
Let $\alpha$ be Diophantine. If $v$ is sufficiently close to $u^0\in\U(\alpha,A)$, the torus $\normal{T}^n_0$ persists up to twist.
\label{theorem C}
\end{thm}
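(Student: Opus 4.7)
The plan is to combine Theorem \ref{Moser theorem} with the classical technique of elimination of parameters, using the $m$ eigenvalues of $A$ as internal parameters. Since $A$ has real, non-zero, simple eigenvalues, the Diophantine condition \eqref{DC} reduces to the standard Diophantine condition on $\alpha$ (the only source of small divisors is $\imath k\cdot\alpha$ when $k\neq 0$), together with the non-resonance inequalities $|l\cdot \bar a|>0$ for $0<|l|\leq 2$, which persist under small variations of the eigenvalues.

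Fix a basis of $\R^m$ in which $A$ is diagonal, and for $s=(s_1,\ldots,s_m)\in\R^m$ close to $0$ set $A(s):=A+\mathrm{diag}(s_1,\ldots,s_m)$ in this basis. For $v$ close to $u^0$, applying Theorem \ref{Moser theorem} with reference class $\U(\alpha,A(s))$ yields a unique triplet $(g(s,v),u(s,v),\lambda(s,v))$ with $v=g(s,v)_* u(s,v)+\lambda(s,v)$, where $\lambda=(\beta,\,b+B\cdot r)$ satisfies $A(s)\cdot b=0$ and $[A(s),B]=0$. The invertibility of $A(s)$ forces $b(s,v)=0$, and the simplicity of its spectrum forces $B(s,v)$ to be diagonal in the fixed basis, hence to be determined by a vector in $\R^m$ that I still denote $B(s,v)$.

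Everything then reduces to solving the finite-dimensional equation $B(s,v)=0$ for $s$ as a function of $v$. At $v=u^0$ one checks directly that the triplet $\bigl(\id,\,(\alpha+O(r),\,A(s)\cdot r+O(r^2)),\,(0,\,-\mathrm{diag}(s)\cdot r)\bigr)$, with the $O(r)$ and $O(r^2)$ remainders inherited from $u^0$, solves the normal form equation; by uniqueness in Theorem \ref{Moser theorem} this gives $B(s,u^0)=-\mathrm{diag}(s)$, so $\partial_s B(0,u^0)=-I_m$. The finite-dimensional implicit function theorem then produces $s=s(v)$ such that $B(s(v),v)=0$, yielding a decomposition $v=g_* u+(\beta,0)$, which is precisely persistence up to twist.

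The main technical hurdle is the $C^1$-dependence of Moser's normal form on the internal parameter $s$: the elimination step requires $s\mapsto B(s,v)$ to be at least continuously differentiable, uniformly in $v$ close to $u^0$. This smoothness is a built-in feature of the underlying inverse function theorem (Theorem \ref{teorema inversa}) used to prove Theorem \ref{Moser theorem}, rather than an afterthought; the remaining verifications — the uniformity of the Moser neighborhood in $s$ and the persistence of the Diophantine and non-resonance conditions under small perturbations of $A$ — are routine.
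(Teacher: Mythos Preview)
Your proposal is correct and follows essentially the same route as the paper: parametrize Moser's normal form by the diagonal matrix $A(s)=A+\operatorname{diag}(s)$, observe that the invertibility and simple spectrum of $A(s)$ force $b=0$ and $B$ diagonal, compute $B(s,u^0)=-\operatorname{diag}(s)$ by plugging $u^0$ itself into the parametrized normal form, and kill $B$ via the finite-dimensional implicit function theorem. Your explicit discussion of the $C^1$-dependence on $s$ and of the stability of the non-resonance conditions under small variations of $A$ matches what the paper relies on implicitly through its Whitney-smoothness proposition.
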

\endgroup

\comment{\begin{cor*}[Twisted torus]
Let $\alpha$ be Diophantine and $A$ have simple, real, non $0$ eigenvalues. If $v$ is sufficiently close to $u^0\in\U(\alpha,A)$, the torus $\normal{T}^n_0$ persists up to twist.
\label{cor C}
\end{cor*}}

Eventually, let $m\geq n$ and let vector fields in $\U(\alpha,A)$ have a twist in the following sense: the matrix term $u_1: \T^n\to \Mat_{n\times m}(\R)$ in \[u(\teta,r)= (\alpha + u_1(\teta)\cdot r + O(r^2), A\cdot r + O(r^2)),\] is such that $\int_{\T^n}u_1(\teta)\, d\teta$ has maximal rank $n$.

  \begingroup
\setcounter{tmp}{\value{thm}}
\setcounter{thm}{3} 
\renewcommand\thethm{\Alph{thm}}

\begin{thm}[Translated torus]
Let $u^0\in\U(\alpha,A)$ have a twist and $\alpha$ be Diophantine. If $v$ is sufficiently close to $u^0$, the torus $\normal{T}^n_0$ persists up to translation.
\label{theorem D}
\end{thm}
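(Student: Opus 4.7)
The plan is to reduce Theorem D to Theorem C via a finite-dimensional implicit function theorem, using the twist hypothesis to trade the tangential counter-term $\beta \in \R^n$ for a translation $b \in \R^m$ in the normal direction. First, I invoke Theorem C (Twisted torus) with the given $\alpha$ (Diophantine) and $A$ (simple real non-zero eigenvalues, hence invertible): it provides, for each $v$ in a neighborhood of $u^0$, a smooth triplet $(g(v), u(v), \beta(v)) \in \G \times \U(\alpha, A) \times \R^n$ satisfying $\push{g(v)} u(v) + (\beta(v), 0) = v$ and $(g, u, \beta)(u^0) = (\id, u^0, 0)$. Consider the finite-dimensional map $H(v, b) := \beta(v - (0, b)) \in \R^n$ for $b \in \R^m$: a smooth solution $b(v)$ of $H(v, b(v)) = 0$ with $b(u^0) = 0$, together with Theorem C applied to $v - (0, b(v))$, yields the translated-torus decomposition $\push{g'} u' + (0, b(v)) = v$ and completes the proof.

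The crucial step is therefore the surjectivity of the differential $\partial_b H|_{(u^0, 0)}: \R^m \to \R^n$. Linearizing the identity $\push{g} u + (\beta, 0) = v$ at the reference point gives
\[
  [\delta g, u^0] + \delta u + (\delta\beta, 0) = \delta v,
\]
with $\delta g = (\delta\varphi(\theta), \delta R_0(\theta) + \delta R_1(\theta) r)$ and $\delta u \in T_{u^0} \U(\alpha, A)$ vanishing at $r = 0$ (both the tangential constant $\alpha$ and the normal linear term $A$ being frozen in $\U(\alpha, A)$). Choosing $\delta v = -(0, \delta b)$ and evaluating at $r = 0$, a direct Lie-bracket computation reduces the system to the two coupled equations on $\T^n$:
\[
  u_1(\theta) \delta R_0(\theta) - (\alpha \cdot \partial_\theta) \delta\varphi(\theta) + \delta\beta = 0,
  \qquad
  A \delta R_0(\theta) - (\alpha \cdot \partial_\theta) \delta R_0(\theta) = -\delta b.
\]
Invertibility of $A$, whose real eigenvalues never coincide with the purely imaginary numbers $i k \cdot \alpha$, renders the cohomological operator $A - (\alpha \cdot \partial_\theta)$ invertible on every Fourier mode, forcing $\delta R_0 \equiv -A^{-1} \delta b$ (constant). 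Averaging the tangential equation over $\T^n$ then gives $\delta\beta = \bar u_1 A^{-1} \delta b$, where $\bar u_1 := \int_{\T^n} u_1(\theta)\, d\theta$.

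The twist hypothesis $\operatorname{rank}(\bar u_1) = n$ (which is precisely what forces $m \geq n$) makes this linear map surjective onto $\R^n$, so the standard finite-dimensional implicit function theorem, applied on any complement of $\ker \partial_b H|_{(u^0, 0)}$ in $\R^m$, produces the required smooth $b(v)$. The main obstacle of the whole argument is really Theorem C itself, which encapsulates the KAM--Moser machinery via the abstract inverse function theorem and the Diophantine condition on $\alpha$; once Theorem C is available as a parametric black box, Theorem D becomes a clean finite-dimensional elimination of parameters in the spirit of Herman and R\"ussmann, the twist hypothesis supplying precisely the $n$ degrees of freedom required to kill $\beta$.
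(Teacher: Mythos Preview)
Your proof is correct and follows the same overall strategy as the paper's: invoke Theorem~C as a black box and then kill the remaining tangential counter-term $\beta$ by a finite-dimensional implicit function argument, the twist hypothesis supplying exactly the $n$ directions needed. The parametrisation differs, though. You subtract a constant normal field $(0,b)$ from $v$ and solve $\beta\bigl(v-(0,b)\bigr)=0$; the paper instead first conjugates by a diffeomorphism $F$ so that $u_1$ becomes equal to its average, and then translates in actions, looking at $v_c(\theta,r)=v(\theta,r+c)$ and reading off $\beta^0(c)=u_1\cdot c+O(c^2)$ and $b^0(c)=A\cdot c+O(c^2)$ directly from the Taylor expansion of $u^0_c$. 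Since $A$ is invertible the two families are related at first order by $b\sim Ac$, and your linearisation $\delta\beta=\bar u_1\,A^{-1}\,\delta b$ recovers exactly the paper's $\partial_c\beta^0(0)=u_1$ after this change of parameter. Your route has the small advantage of not needing the preliminary straightening of $u_1$; the paper's has the advantage that action-translation is geometrically the natural way to move the candidate torus. One minor slip: in Theorem~C the matrix $A$ is allowed to vary ($u\in\widehat{\U}$), so your parenthetical that $A$ is ``frozen in $\U(\alpha,A)$'' is not quite accurate --- but since any such $\delta u$ still vanishes at $r=0$, this does not affect the computation of $\delta\beta$.
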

\endgroup

\subsection{An application to Celestial Mechanics} The motivation of the previous geometric results on normal forms for dissipative systems comes from Celestial Mechanics. The normal forms we proved provide ready-to-use theorems that, in some cases fit very well concrete problems issued from Celestial Mechanics. Besides, if on the one hand these theorems clarify in a very neat way the "lack of parameters" problem, on the one other the procedure of elimination of parameters highlights relations between physical parameters and the existence of invariant tori in the system.

\comment{One of the main interests of Celestial Mechanics is to understand the dynamics of the revolution of a body (or many bodies) around a fixed massive point or the rotation's dynamics of such a body around its spin-axis. Since the internal structure of planets and satellites can influence such motions, depending on its elastic or non elastic response to the gravitational attraction, it is of major importance to understand the behavior of a planet or satellite subjected to non conservative forces (tide's effects in particular). The first attempt in this direction, has been to consider systems in which the dissipative effect translates in the presence of a friction term (whose precise characteristics is very difficult to determine) in the equations of motions, while the remaining interactions are still Hamiltonian. }

To give a major exemple, we conclude the paper with an application of Theorem \ref{theorem B} to the problem of persistence of quasi-periodic attractors in the spin-orbit system; this astronomical problem wants to study the dynamics of the rotation about its spin axis of a non-rigid and non-elastic body whose center of mass revolves along a given elliptic Keplerian orbit around a fixed massive point (see section \ref{The spin-orbit problem} for the precise formulations of the model). A study of this problem using a PDE approach was given in \cite{Celletti-Chierchia:2009}, while a generalization in higher dimension was presented in \cite{Locatelli-Stefanelli:2012}, but using Lie series techniques instead. 

For the $2n$-dimensional model, on $\T^n\times\R^n$ we consider vector field of the form $$ \hat{v} = v - \eta(r - \Omega)\partial_r $$ where $v\in\Vh$ is a perturbation of $u^0\in\Uh(\alpha,0)$ with non-degenerate torsion, $\eta\in\R^+$ a dissipation constant and $\Omega\in\R^n$ a vector of external free parameters. By simple application of the translated torus theorem \ref{theorem B} and the implicit function theorem in finite dimension, the persistence result is phrased as follows.

\begin{thm*}[spin-orbit in $n$ d.o.f.] If $v$ is sufficiently close to $u^0$, there exists a unique frequency adjustment $\Omega\in\R^n$ close to $0$, a unique $u\in\Uh(\alpha,-\eta)$ and a unique $g\in\G^{\omega}$ such that $\hat{v}$ verifies $\push{g} u = \hat{v}$. Hence $\hat{v}$ possesses an invariant $\alpha$-quasi-periodic and $\eta$-normally attractive torus. 
\end{thm*}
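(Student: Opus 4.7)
The plan is to invoke Theorem \ref{theorem B} to put $\hat v$ in normal form up to a constant action-translation $b \in \R^n$, and then to use the $n$ free external parameters $\Omega$ to kill this counter-term via the classical implicit function theorem in finite dimension.

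Rewrite $\hat v = v + \eta\Omega\,\base{r} - \eta r\,\base{r}$, which lies in the affine space $\Vh \oplus (-\eta r + \zeta)\base{r}$ of Theorem \ref{theorem B} with $\zeta = \eta\Omega$. For $(v,\Omega)$ in a neighborhood of $(u^0,0)$, that theorem (applied with base point $u^0 - \eta r\,\base{r} \in \Uh(\alpha,-\eta)$) produces a unique triple $(g,u,b)\in\G^{\omega}\times\Uh(\alpha,-\eta)\times\R^n$, smoothly depending on $(v,\Omega)$, such that
\[
\push{g}u + (0,b) = \hat v.
\]
At the base point $(v,\Omega)=(u^0,0)$ the right-hand side already belongs to $\Uh(\alpha,-\eta)$, so uniqueness forces $(g,u,b)=(\id,\,u^0-\eta r\,\base{r},\,0)$ and $b(u^0,0)=0$. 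The problem is thus reduced to solving the finite-dimensional equation $b(v,\Omega)=0$ for $\Omega=\Omega(v)$ near $v=u^0$.

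To invoke the classical implicit function theorem it suffices to verify that $\partial_\Omega b(u^0,0)$ is invertible. Geometrically, Theorem \ref{theorem B} forces $\hat v-(0,b)$ to carry a translated invariant torus on which the flow is $\alpha$-quasi-periodic with normal exponent $-\eta$; for $v$ close to $u^0$ such a torus lies approximately at $r_\ast\approx\Omega-b/\eta$, and the mean tangential frequency there equals $\alpha+\bar u_1\,r_\ast+O(r_\ast^2)$, where $\bar u_1:=\int_{\T^n}u_1(\theta)\,d\theta$ is the invertible mean torsion of $u^0$. Imposing this mean frequency to equal $\alpha$ yields $\bar u_1(\Omega-b/\eta)=O(\|\Omega\|^2)$, hence $b=\eta\,\Omega+O(\|\Omega\|^2)$ and $\partial_\Omega b(u^0,0)=\eta\,I_n$. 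A rigorous derivation linearizes $\push{g}u+(0,b)=\hat v$ at the base point and inspects its zeroth-order-in-$r$ part: the dissipation $\eta\neq 0$ kills every non-mean Fourier mode of the linearized generating function $\delta S$, the torsion hypothesis forces the shift parameter $\delta\xi\in\R^n$ of $\G^{\omega}$ to vanish, and the $\T^n$-mean of the $r$-component of the linearized equation then gives directly $\delta b=\eta\,\delta\Omega$.

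Since $\eta>0$, the matrix $\eta\,I_n$ is invertible and the finite-dimensional implicit function theorem furnishes a unique smooth map $v\mapsto\Omega(v)$ with $\Omega(u^0)=0$ and $b(v,\Omega(v))\equiv 0$. The triple $(g(v,\Omega(v)),\,u(v,\Omega(v)),\,\Omega(v))$ realizes the claimed decomposition $\push{g}u=\hat v$, its uniqueness being inherited from Theorem \ref{theorem B} together with the implicit function theorem. The whole infinite-dimensional KAM work is absorbed into Theorem \ref{theorem B}; the only remaining difficulty, namely the invertibility of $\partial_\Omega b$, is of finite-dimensional nature and relies on the non-degenerate torsion of $u^0$ together with the strict positivity of $\eta$.
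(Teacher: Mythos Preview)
Your proof is correct and follows essentially the same two-step strategy as the paper: apply the translated-torus normal form (Theorem~\ref{theorem B}) to obtain $\push{g}u + b\,\partial_r = \hat v$, then eliminate $b$ by the finite-dimensional implicit function theorem in $\Omega$. The only difference is in how the invertibility of $\partial_\Omega b$ at the base point is established: you compute it explicitly by linearizing the cohomological system (correctly obtaining $\partial_\Omega b(u^0,0)=\eta\,I_n$ via $d\dot S=0$, then $\dot\xi=0$ from torsion, hence $\delta b=\eta\,\delta\Omega$), whereas the paper argues that for the \emph{unperturbed} $\hat u$ the normal form is trivially $\hat u = \push{\id}u^0 + \eta\Omega\,\partial_r$, so $b(\Omega)=\eta\Omega$ by uniqueness, and then invokes $C^1$-openness of the local-diffeomorphism property to pass to nearby $\hat v$. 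Both routes are valid; the paper's is slightly quicker since it avoids re-opening the linearized system, while yours makes the mechanism (torsion forces $\dot\xi=0$) more transparent. One cosmetic point: your phrase ``the dissipation $\eta\neq 0$ kills every non-mean Fourier mode of $\delta S$'' slightly misattributes the cause---it is rather that the right-hand side $\dot V_0^H$ vanishes when only $\Omega$ is varied at the base point---but the conclusion $d\dot S=0$ is correct regardless.
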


This result is eventually applied to the astronomical spin-orbit problem; in this case $n=2$ and the vector field $v$ corresponds to the Hamiltonian (issued from a non-autonomous model)
$$ H(\teta,r) = \alpha r_1 + r_2 + \frac{1}{2} r_1^2 + \eps f(\teta,r),$$ where the degeneracy of torsion is nothing but an artificial problem. The frequency adjustment is in this case $\Omega= (\nu-\alpha,0)$, $\nu\in\R$.\\ 
The point of view of normal forms and elimination of parameters allow to formulate the persistence result as follows.

\begin{thm*}[Surfaces of invariant tori] Let $\eps_0$ be the maximal value that the perturbation can attain. In the space $(\eps,\eta,\nu)$, to every $\alpha$ Diophantine corresponds a surface $\nu=\nu(\eta,\eps)$ ($\eps\in [0,\eps_0]$) analytic in $\eps$, smooth in $\eta$, for those values of parameters of which $\hat{v}$ admits an invariant $\alpha$-quasi-periodic torus. This torus is $\eta$-normally attractive (resp. repulsive) if $\eta>0$ (resp. $\eta<0$)
\end{thm*}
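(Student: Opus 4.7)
The plan is to apply the ``spin-orbit in $n$ d.o.f.'' theorem stated just above to the family $\hat v_{\varepsilon,\eta,\nu}$, restrict the vector adjustment $\Omega$ to the admissible line $\Omega=(\nu-\alpha,0)$, and use an implicit function theorem in the scalar $\nu$ to exhibit the surface. The required non-degeneracy will come from the single non-vanishing torsion direction $\partial^2_{r_1}H=1\neq 0$ of $H_0=\alpha r_1+r_2+\tfrac12 r_1^2$.

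Fix $\alpha\in\mathcal{D}_{\gamma,\tau}$. For $(\varepsilon,\eta,\nu)$ in a neighborhood of $(0,0,\alpha)$, the field $\hat v_{\varepsilon,\eta,\nu}=X_H-\eta(r-\Omega)\partial_r$ is $O(\varepsilon)+O(\nu-\alpha)$ close to $u^0-\eta r\,\partial_r\in\Uh(\alpha,-\eta)$, uniformly in $\eta\in[-\eta_0,\eta_0]$, so Theorem \ref{theorem B} applies and produces a unique triplet $(g,u,b)(\varepsilon,\eta,\nu)\in\G^{\omega}\times\Uh(\alpha,-\eta)\times\R^n$ with $\push{g}u+(0,b)=\hat v_{\varepsilon,\eta,\nu}$; the counter-term $b$ is analytic in $(\varepsilon,\nu)$ and (Whitney-)smooth in $\eta$. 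Because $H$ depends on $r_2$ only through the trivial linear term, the $r_2$-component of $X_H$ reduces to $-\varepsilon\,\partial_{\theta_2}f$, whose mean over $\T^2$ vanishes by $\theta_2$-periodicity; the persistence obstruction in that direction is thus absorbed for free and $b_2(\varepsilon,\eta,\nu)\equiv 0$. One is reduced to the scalar equation $b_1(\varepsilon,\eta,\nu)=0$.

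At $\varepsilon=0$ a short direct computation gives $b_1(0,\eta,\nu)=\eta(\nu-\alpha)$, and more generally the structure of the problem leads to the factorization $b_1(\varepsilon,\eta,\nu)=\eta\,\tilde b(\varepsilon,\eta,\nu)$ with $\tilde b(0,\eta,\alpha)=0$ and $\partial_\nu\tilde b(0,\eta,\alpha)=1$. The classical implicit function theorem applied to $\tilde b=0$ then produces a unique $\nu=\nu(\varepsilon,\eta)$ on $[0,\varepsilon_0]\times[-\eta_0,\eta_0]$, analytic in $\varepsilon$ and smooth in $\eta$, along which $\push{g}u=\hat v$; hence $\hat v$ admits the invariant $\alpha$-quasi-periodic torus $g(\mathrm{T}^n_0)$, whose normal Floquet spectrum equals $\{-\eta\}$ because $u\in\Uh(\alpha,-\eta)$, giving attractivity when $\eta>0$ and repulsivity when $\eta<0$.

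The main obstacle I foresee is justifying the factorization $b_1=\eta\,\tilde b$ uniformly near $\eta=0$, where the coupling $\eta\,\Omega$ between the parameter and the obstruction degenerates. The key observation is that for $\eta=0$ any value of $\Omega$ leaves $\hat v$ unchanged, so $b_1|_{\eta=0}$ is independent of $\nu$; combined with $b_1(0,0,\alpha)=0$ and the uniform-in-$\eta$ estimates built into Theorem \ref{theorem B}, this forces every contribution to $b_1$ along the Newton-like iteration to carry at least one factor of $\eta$, making $b_1/\eta$ a regular analytic-in-$\varepsilon$, smooth-in-$\eta$ quantity to which the finite-dimensional implicit function theorem applies cleanly.
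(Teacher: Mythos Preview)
Your overall strategy --- get a normal form with counter-term $b$, then kill $b$ by the finite-dimensional implicit function theorem in $\nu$ --- matches the paper's. But there is a genuine gap at the very first step: you invoke Theorem~\ref{theorem B} for the Hamiltonian $H_0=\alpha r_1+r_2+\tfrac12 r_1^2$, whose torsion matrix is $Q=\operatorname{diag}(1,0)$. Theorem~\ref{theorem B} (and the $n$-d.o.f.\ spin-orbit theorem built on it) requires $\det\int_{\T^n}Q\,d\theta\neq 0$; with $Q$ degenerate the linearized operator $\phi'$ is \emph{not} invertible, because in the proof of Theorem~\ref{theorem Russmann} the equation determining $\dot\xi$ reads $\int Q\cdot(\eta M+\id)\,\dot\xi = -\int(\dot v_0 + Q\cdot S_0)$, and its second row collapses to $0=\int\dot v_{0,2}$ with $\dot\xi_2$ undetermined. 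So the triplet $(g,u,b)$ you want to produce simply does not exist in the function spaces of Theorem~\ref{theorem B}.

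The paper does not try to argue $b_2\equiv 0$ a posteriori; it instead restricts the problem \emph{a priori} to smaller spaces (Corollary~\ref{Russmann spin-orbit}): transformations $g\in\bar\G^\omega$ with $\varphi_2=\id$ and $\xi_2=0$, Hamiltonians in $\bar{\mathcal H}$ with $\partial_{r_2}H\equiv 1$, and a one-dimensional counter-term $b\in\R$ from the start. In those bar-spaces only the $r_1$-torsion $Q_{11}=1$ is needed to invert $\phi'$, and the $\theta_2$-equation is trivially satisfied because $\dot v_{0,2}=0$ by the structure of $\bar{\mathcal H}$ and $\bar\G^\omega$. Your argument that ``$-\eps\,\partial_{\theta_2}f$ has zero mean, hence $b_2\equiv 0$'' only treats the zeroth Newton iterate at $g=\id$; after one step the pull-back by $g\neq\id$ mixes the components and the $r_2$-equation is no longer just $-\eps\,\partial_{\theta_2}f$. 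The restriction to bar-spaces is what makes this closure hold at every iterate.

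Once you work in the restricted setting, your remaining steps are essentially what the paper does: one has $b=\eta(\nu-\alpha)+\sum_k\delta\xi_k$ with each $\delta b_k=-\eta\,\delta\xi_k$ (Remark~\ref{remark nu}), so $b/\eta$ is smooth in $\eta$ down to $\eta=0$, analytic in $\eps$, and $\partial_\nu(b/\eta)|_{\eps=0}=1$; the implicit function theorem then yields $\nu=\nu(\eta,\eps)=\alpha+O(\eps^2)$. Your heuristic for the factorization $b_1=\eta\,\tilde b$ is correct in spirit, but the clean justification is the explicit formula $\delta b=\eta(\widehat{\delta\zeta}-\dot\xi)$ from the proof of Theorem~\ref{theorem Russmann}, not the indirect observation that $\hat v|_{\eta=0}$ is independent of $\nu$.
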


All the due reductions being made (see corollary \ref{Russmann spin-orbit}), the proof is a particular case of the theorem \ref{theorem B} "à la R\"ussmann" and the elimination of the translation parameter; as a byproduct it starts a portrait of the parameters' space of this problem in terms of zones where different kind of dynamics occurs. See theorem \ref{teorema superfici} and corollary \ref{corollario curve}.

\comment{ In this line of thought in section \ref{The Hamiltonian case: Herman's theorem} we start by presenting Moser's theorem in the purely {Hamiltonian context} (proved independently by Herman and presented in Féjoz's works \cite{Fejoz:2004} and \cite{Fejoz:2010} as the "twisted conjugacy" theorem).\\ Vector fields $v^{\h}\in\Vh\subset\V$ involved correspond to real analytic Hamiltonians $H$ defined in a neighborhood of $\text{T}^n_0:=\T^n\times\set{0}$, the corresponding subspace of Hamiltonian vector fields $u^{\h}\in\Uh(\alpha,0)$ for which $\text{T}^n_0$ is invariant come from a Hamiltonian $K(\teta,r)= \alpha\cdot r + O(r^2)$ instead:
\[u^{\h}=\eqsys{\dot\teta = \alpha + O(r)\\
\dot r = O(r^2).}\] }

\comment{\subsection{Dissipative systems: normal forms "à la Herman" and "à la R\"ussmann"}

From this context, slightly modifying the class of these vector fields by adding the aforementioned dissipative linear term\footnote{We noted $\partial_r = (\partial_{r_1},\ldots,\partial_{r_n})$ and omitted the tensor product sign $r\tensor\partial_r$} in the normal direction $u^{\h}\oplus (-\eta  r\,\partial_r)$, $\eta\in\R^{+}$, it is possible to prove a first \emph{generalization of Herman's theorem} to dissipative systems (see section \ref{dissipative Herman} and theorem \ref{theorem Herman} baptized "Herman dissipative") in which  the number of needed external parameters breaking the dynamical conjugacy is the same as in the purely Hamiltonian context (a translation term $\beta\,\partial_{\teta}, \, \beta\in\R^n$ in the angle's direction). For this to be true it is fundamental that the dissipation acts the same in any direction: the constant matrix $A$ appearing in $r$-directions is a homothety $-\eta\id$. See lemma \ref{lemma Hamiltonian1}.

\comment {In section \ref{The Hamiltonian case: Herman's theorem}, we present the normal form in the classical Hamiltonian context (proved independently by Herman, and presented also by Féjoz in his works \cite{Fejoz:2004} and \cite{Fejoz:2010}) from which, modifying the class of vector fields involved by adding a linear dissipative term in the normal direction, it is possible to give and prove a first generalization in the direction of dissipative systems (see section \ref{dissipative Herman}). The more general dissipative case is presented as well in \eqref{general dissipative}.  A diagram portraying normal forms in the dissipative context, from the more general to the Hamiltonian one, concludes this part.}

In a second step (see section \ref{Russmann section vector fields}) we add a \emph{twist hypothesis} on the Hamiltonian  assuming that the average of the coefficient of the quadratic term in $$K(\teta,r)= \alpha\cdot r + \frac{1}{2}Q(\teta)\cdot r^2 + O(r^3)$$ is a non degenerate quadratic form: $\det \int_{\T_n} Q(\teta)\,\frac{d\teta}{(2\pi)^n}\neq 0$.\\ In this context it is natural to take advantage of this non degeneracy condition and perform transformations by symplectic diffeomorphisms. Because of the presence of the constant term $-\eta r\,\partial_r$ we obtain a translated torus result via a normal form theorem (see section \ref{Russmann section vector fields} and the \emph{translated torus theorem} therein) that can be considered as an analog for vector fields in this class of the celebrated R\"ussmann's translated curve theorem for diffeomorphisms of the annulus (see \cite{Russmann:1970}). As a matter of fact we prove that the operator 
\begin{equation}
\phi : (g,u,b)\mapsto \push{g}u + b\,\partial_r = v,\quad b\in\R^n
\label{translated torus intro}
\end{equation}
is a local diffeomorphism and the image of the torus $g(\normal{T}^n_0)$ by the flow of $v$ is translated by $b$. 

The more {general dissipative} case in which no Hamiltonian hypothesis is made and the dissipative term is provided by a more general matrix $A\cdot r$ with negative real eigenvalues, is also given as a straightforward corollary to Moser's theorem.\\ We eventually summarize these results in two diagrams that give a portrait of these dissipative systems in terms of normal forms (see section \ref{general dissipative} and the end of section \ref{Russmann section vector fields}).

}

\comment{

\subsubsection{The spin-orbit problem} 
In section \ref{section A KAM result}, we deduce the central results of Celletti-Chierchia \cite[Theorem $1$]{Celletti-Chierchia:2009} and Stefanelli-Locatelli \cite[Theorem $3.1$]{Locatelli-Stefanelli:2012} (who generalize the work of Celletti-Chierchia to any dimension) from the translated torus theorem of section \ref{Russmann section vector fields} and the  elimination of the translation parameter $b$ (see section \ref{elimination of parameters} and theorems \ref{teorema implicita} and \ref{A curve of invariant tori}).\\ The result can be phrased as following.\\
A satellite (or a planet) is said to be in $n:k$ spin-orbit resonance when it accomplishes $n$ complete rotations about its spin axis, while revolving exactly $k$ times around its planet (or star). There are various examples of such a motion in Astronomy, among which the Moon $(1:1)$ or Mercury $(3:2)$.\\ We want to study the dynamics of the rotation about its spin axis (represented by the angular variable $\teta$) of a triaxial body whose center of mass revolves along a given elliptic Keplerian orbit around a fixed massive point. The rotation axis is supposed to be perpendicular to the orbital plane. The internal structure of the body responds in a non-elastic way to gravitational forces. In the case of a triaxial ellipsoid with different equatorial axis, the calculation of the potential gives out a supplementary term proportional to the difference of the two smallest axes of inertia which perturbs the rotation. Under these hypothesis, the dynamics is described by the following equation of motion in $\R$ (see \cite{Goldreich-Peale:1966} and \cite{Laskar-Correia:2010} for instance):
\begin{equation}
\ddot\teta + \eta(\dot\teta - \nu) + \eps\partial_{\teta}f(\teta,t) = 0,
\label{spin-orbit intro}
\end{equation}
where $(\teta,t)$ are $2\pi$-periodic variables.\\
In the equation $\eta\dot\teta$, with $\eta\in\R^{+}$, is a friction term due to the non-elastic response of the internal structure,  $\eps f(\teta,t)$ is the potential function where $\eps = \frac{B-A}{C}$ ($A,B,C$ being the principal axes of inertia), while $\nu\in\R$ is an external free parameter representing the proper frequency of the attractor of the dynamics when $\eps = 0$. 
\begin{figure*}[h!]
\begin{picture}(0,0)%
\includegraphics{Fig2.eps}%
\end{picture}%
\setlength{\unitlength}{1657sp}%
\begingroup\makeatletter\ifx\SetFigFont\undefined%
\gdef\SetFigFont#1#2#3#4#5{%
  \reset@font\fontsize{#1}{#2pt}%
  \fontfamily{#3}\fontseries{#4}\fontshape{#5}%
  \selectfont}%
\fi\endgroup%
\begin{picture}(5199,4241)(2689,-6099)
\put(6211,-4156){\makebox(0,0)[lb]{\smash{{\SetFigFont{6}{7.2}{\familydefault}{\mddefault}{\updefault}{\color[rgb]{1,0,0}$\theta$}%
}}}}
\put(4141,-2356){\makebox(0,0)[lb]{\smash{{\SetFigFont{5}{6.0}{\familydefault}{\mddefault}{\updefault}{\color[rgb]{0,0,0}$\eta>0$ dissipation}%
}}}}
\put(4141,-2581){\makebox(0,0)[lb]{\smash{{\SetFigFont{5}{6.0}{\familydefault}{\mddefault}{\updefault}{\color[rgb]{0,0,0}$\eps$ perturbation}%
}}}}
\put(4141,-2806){\makebox(0,0)[lb]{\smash{{\SetFigFont{5}{6.0}{\familydefault}{\mddefault}{\updefault}{\color[rgb]{0,0,0}$\nu\in\R$ frequency}%
}}}}
\put(4141,-3031){\makebox(0,0)[lb]{\smash{{\SetFigFont{5}{6.0}{\familydefault}{\mddefault}{\updefault}{\color[rgb]{0,0,0}$f$ potential}%
}}}}
\end{picture}%
\caption{The spin-orbit problem}
\label{fig:2}  
\end{figure*}\\ 
The vector field corresponding to equation \eqref{spin-orbit intro}, in the coordinates $(\teta, r = \dot\teta - \alpha)$, $\alpha$ being a fixed Diophantine frequency conveniently introduced, reads 
\begin{equation*}
\eqsys{\dot\teta = \alpha + r\\
\dot r = -\eta r + \eta(\nu - \alpha) -\eps \partial_{\teta}f(\teta,t).}
\end{equation*}
 When $\eps = 0$, for every Diophantine $\alpha$ the torus $r=0$ is invariant and attractive, provided $\nu=\alpha$. \smallskip\\
The persistence of the attractive torus under perturbation is shown in two steps. By the translated torus theorem \ref{theorem Russmann} adapted to this particular context (corollary \ref{Russmann spin-orbit}), we prove that, provided the perturbation is small enough, a normal form like \eqref{translated torus intro} exists for \emph{any} values of $\eta\in [-\eta_0,\eta_0]$. In a second step we show that the translation $b$ can be eliminated by implicitly solving $b(\alpha,\nu,\eta,\eps) = 0$ for a unique choice of $\nu$, on which $b$ smoothly depends. Since the maximal bound of the perturbation $\eps$ turns out to be uniform with respect to $\eta$, the smooth dependence on parameters allows to define, for every Diophantine $\alpha$, a surface $\nu=\nu(\eta,\eps)$ in the space $(\eta,\nu,\eps)$ on which the counter term $b(\alpha,\nu,\eta,\eps)$ vanishes, guaranteeing the existence of reducible $\alpha$-quasi-periodic attractive (resp. repulsive, when $\eta<0$) invariant tori (see theorem \ref{teorema superfici} and corollary \ref{corollario curve}). Every plane $\eps= const.$ ($\eps$ being an admissible perturbation), thus carries a Cantor set of curves $C_\alpha$, along which the counter term $b(\alpha,\nu,\eta,\eps)=0$.\\An important dichotomy between dissipative systems carrying Hamiltonian as opposed to non-Hamiltonian perturbation will be pointed out as conclusion of these results (see section \ref{dichotomy}). \\ This interpretation opens the way to a more global study in the parameter' space $(\eta,\nu,\eps)$ that will aim at delimiting regions in which the dynamics, or its important features, is understood. This will be carried out in a future work.

}

\section{The normal form of Moser}
\label{the normal form of Moser}
Theorem \ref{Moser theorem} will be deduced by the abstract inverse function theorem \ref{teorema inversa} and the regularity results contained in appendix \ref{section teorema inversa}.
\subsection{Complex extensions}
\label{complex extensions}
Let us extend the tori \[\T^n=\R^n/{2\pi\Z^n}\qquad \text{and}\qquad \normal{T}^n_0=\T^n\times\set{0}\subset\T^n\times\R^m,\] as \[\T^n_{\C}= \C^n/{2\pi\Z^n}\qquad \text{and} \qquad \text{T}^n_\C = \T^n_{\C}\times\C^m\] respectively, and consider the corresponding $s$-neighborhoods defined using $\ell^\infty$-balls (in the real normal bundle of the torus): \[\T^n_s=\set{\teta\in\T^n_{\C}:\, \max_{1\leq j \leq n}\abs{\Im\teta_j}\leq s}\quad\text{and}\quad \text{T}^n_s=\set{\pa{\teta,r}\in\text{T}^n_{\C}:\, \abs{(\Im\teta,r)}\leq s},\]
where $\abs{(\Im\teta,r)}:= \max_{1\leq j\leq n}\max(\abs{\Im\teta_j},\abs{r_j})$. \\

Let now $f: \normal{T}^n_s\to \C$ be holomorphic, and consider its Fourier expansion $f(\teta,r)=\sum_{k\in\Z^n}\,f_k(r)\,e^{i\,k\cdot\teta}$, noting $k\cdot\teta = k_1\teta_1 +\ldots k_n\teta_n$. In this context we introduce the so called "weighted norm": \[\abs{f}_s := \sum_{k\in\Z^n}\abs{f_k}\, e^{\abs{k}s},\quad \abs{k}=\abs{k_1}+\ldots +\abs{k_n},\] $\abs{f_k}=\sup_{\abs{r}<s} \abs{f_k(r)}$. Whenever $f : \normal{T}^n_s\to\C^{n}$, $\abs{f}_s = \max_{1\leq j\leq n}(\abs{f_j}_s)$, $f_j$ being the $j$-th component of $f(\teta,r)$.\\ It is a trivial fact that the classical sup-norm is bounded from above by the weighted norm: \[\sup_{z\in{\normal{T}^n_s}}\abs{f(z)}\leq\abs{f}_s\] and that $\abs{f}_s<+\infty$ whenever $f$ is analytic on its domain, which necessarily contains some $\normal{T}^n_{s'}$ with $s'>s$. In addition, the following useful inequalities hold if $f,g$ are analytic on $\normal{T}^n_{s'}$ \[\abs{f}_s\leq\abs{f}_{s'}\,\text{ for }\, 0<s<s',\] and \[\abs{fg}_{s'}\leq \abs{f}_{s'}\abs{g}_{s'}.\] For more details about the weighted norm, see for example \cite{Meyer:1975}.\\
In general for complex extensions $U_s$ and $V_{s'}$ of $\T^n\times\R^n$, we will denote $\mathcal{A}(U_s,V_{s'})$ the set of holomorphic functions from $U_s$ to $V_{s'}$ and $\mathcal{A}(U_s)$, endowed with the $s$-weighted norm, the Banach space $\mathcal{A}(U_s,\C)$.

Eventually, let $E$ and $F$ be two Banach spaces,

\begin{itemize}[leftmargin=*]
\item We indicate contractions with a dot "$\,\cdot\,$", with the convention that if $l_1,\ldots, l_{k+p}\in E^\ast$ and $x_1,\ldots, x_p\in E$ 
\begin{equation*}
(l_1\tensor\ldots\tensor l_{k+p})\cdot (x_1\tensor\ldots\tensor x_p) = l_1\tensor\ldots \tensor l_k \gen{l_{k+1},x_1}\ldots \gen{l_{k+p},x_p}.
\end{equation*}
In particular, if $l\in E^\ast$, we simply note $l^n= l\tensor\ldots\tensor l$.\\
\item If $f$ is a differentiable map between two open sets of $E$ and $F$, $f'(x)$ is considered as a linear map belonging to $F\tensor E^{\ast}$,  $f'(x): \zeta\mapsto f'(x)\cdot\zeta$; the corresponding norm will be the standard operator norm \[\abs{f'(x)} = \sup_{\zeta\in E, \abs{\zeta}_E=1}\abs{f'(x)\cdot\zeta}_F.\]
\end{itemize}


\subsection{Space of conjugacies}We define $\mathcal{G}^\sigma_s$ as the subspace of $\mathcal{A}(\text{T}^n_s,\text{T}^n_\C)$ consisting of maps of the form \[g(\teta,r)=\pa{\varphi(\teta), R_0(\teta) + R_1(\teta)\cdot r},\]where
\begin{itemize}[leftmargin=*]
\item  the function $\varphi$ belongs to $\mathcal{A}(\T^n_s,\T^n_\C)$ and is such that $\varphi(0) = 0$ and \[\abs{\varphi - \id}_s<\sigma,\] where $\varphi - \id$ is considered as going from $\T^n_s$ to $\C^n$,\\
\item  $R_0 \in \mathcal{A}(\T^n_s,\C^m)$ and  $R_1\in\mathcal{A}(\T^n_s,\Mat_m(\C))$ satisfy 
\[\abs{R_0(\teta) + R_1(\teta)\cdot r - r}_s < \sigma.\]
\end{itemize}
\begin{figure}[h!]
\begin{picture}(0,0)%
\includegraphics{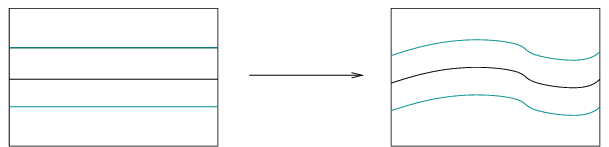}%
\end{picture}%
\setlength{\unitlength}{1657sp}%
\begingroup\makeatletter\ifx\SetFigFont\undefined%
\gdef\SetFigFont#1#2#3#4#5{%
  \reset@font\fontsize{#1}{#2pt}%
  \fontfamily{#3}\fontseries{#4}\fontshape{#5}%
  \selectfont}%
\fi\endgroup%
\begin{picture}(7905,1881)(1561,-5473)
\put(5581,-4426){\makebox(0,0)[lb]{\smash{{\SetFigFont{6}{7.2}{\familydefault}{\mddefault}{\updefault}{\color[rgb]{0,0,0}$g$}%
}}}}
\put(1846,-3751){\makebox(0,0)[lb]{\smash{{\SetFigFont{6}{7.2}{\familydefault}{\mddefault}{\updefault}{\color[rgb]{0,0,0}$\text{T}^n_{s+\sigma}$}%
}}}}
\put(1576,-4246){\makebox(0,0)[lb]{\smash{{\SetFigFont{6}{7.2}{\familydefault}{\mddefault}{\updefault}{\color[rgb]{0,.56,.56}$\text{T}^n_s$}%
}}}}
\put(1711,-4606){\makebox(0,0)[lb]{\smash{{\SetFigFont{6}{7.2}{\familydefault}{\mddefault}{\updefault}{\color[rgb]{0,0,0}$\text{T}^n_{0}$}%
}}}}
\put(9451,-4201){\makebox(0,0)[lb]{\smash{{\SetFigFont{6}{7.2}{\familydefault}{\mddefault}{\updefault}{\color[rgb]{0,.56,.56}$g(\text{T}^n_s)$}%
}}}}
\put(9406,-4651){\makebox(0,0)[lb]{\smash{{\SetFigFont{5}{6.0}{\familydefault}{\mddefault}{\updefault}{\color[rgb]{0,0,0}$g(\text{T}^n_0)$}%
}}}}
\end{picture}%

\caption{Deformed complex domain}
\end{figure}
The "Lie Algebra" $T_{\id}\G^\sigma_s$ of $\mathcal{G}^\sigma_s$, consists of maps \[\dot g (\teta,r)= \pa{\dot\varphi(\teta), \dot R_0(\teta) + \dot R_1(\teta)\cdot r}.\]
Here $\dot{g}$ lies in $\mathcal{A}(\text{T}^n_s,\C^{n+m})$; more specifically $\dot{\varphi}\in\mathcal{A}(\T^n_s,\C^n)$, $\dot{R}_0\in\mathcal{A}(\T^n_s,\C^m)$ and $\dot{R}_1\in\mathcal{A}(\T^n_s,\Mat_m(\C))$. We endow this space too with the norm \[\abs{\dot{g}}_s = \max_{1\leq j\leq n+m} (\abs{\dot{g}_j(\teta,r)}_s).\] 
\subsection{Spaces of vector fields} 
\label{Spaces of vector fields}
We define 
\begin{itemize}[leftmargin=*]
\item $\mathcal{V}_s= \mathcal{A}(\text{T}^n_s,\C^{n+m})$, endowed with the norm \[\abs{v}_s:= \max_{1\leq j\leq n+m}(\abs{v_j(\teta,r)}_s),\] and $\mathcal{V}=\bigcup_s\,\mathcal{V}_s$.
\item For $\alpha\in\R^n$ and $A\in\Mat_n{\R}$, $\mathcal{U}_s(\alpha,A)$ is the subspace of $\V_s$ consisting of vector fields in the form \[u(\teta,r)=\pa{\alpha + O(r) , A\cdot r + O(r^2)}.\]
\end{itemize}

Finally, for a given isomorphism $g\in\mathcal{G}^\sigma_{s}$, we define as \[\abs{v}_{g,s} := \abs{\pull{g} v}_s\] a "deformed" norm depending on $g$,  the notation $\pull{g}$ standing for the pull-back of $v$: this in order not to shrink artificially the domains of analyticity. The problem, in a smooth context, may be solved without changing the domain, by using plateau functions. 

\subsection{The normal form operator $\phi$}
According to theorem \ref{theorem well def} and corollary \ref{cor well def}, the operators 
\begin{equation}
\phi: \G^{\sigma/n}_{s+\sigma}\times\U_{s+\sigma}(\alpha,A)\times\Lambda\to \V_s,\, (g,u,\lambda)\mapsto \push{g}u + \lambda,
\label{normal operator}
\end{equation}
$\push{g}u = (g'\cdot u)\circ g^{-1}$, are now defined. It would be more appropriate to write $\phi_{s,\sigma}$ but, since these operators commute with source and target spaces, we will refer to them simply as $\phi$.\\ We will always assume that $0<s<s+\sigma<1$ and $\sigma<s$.\\ In the following we do not intend to be optimal. 

\subsection{Cohomological equations}

Here we present three derivation operators and the three associated cohomological equations.\\ 
Let $\alpha\in\R^n$ and $a = (a_1,\ldots,a_m)\in\C^m$, the vector of eigenvalues of a matrix $A\in\Mat_m(\R)$, satisfy the following conditions, which all follow from \eqref{DC} in the case $i\,k\cdot\alpha + l\cdot a\neq 0$.
\begin{align}
\label{Dio 1}
&\abs{k\cdot\alpha}\geq \frac{\gamma}{\abs{k}^\tau},\qquad \forall k\in\Z^n\setminus\set{0}\\
\label{Dio 2}
&\abs{\imath k\cdot\alpha + a_j}\geq \frac{\gamma}{\pa{1 + \abs{k}}^\tau},\qquad \forall k\in\Z^n, j=1,\dots,m,\\
\label{Dio 3}
&\abs{\imath k\cdot\alpha + l\cdot a}\geq \frac{\gamma}{\pa{1 + \abs{k}}^\tau},\qquad \forall (k,l)\in\Z^n\times\Z^m\setminus\set{0},\quad\abs{l}=2.
\end{align}

Let us consider a constant vector field $\alpha = (\alpha_1,\ldots,\alpha_n)$  on $\T^n_s$, identified with a vector $\alpha\in\R^n$ and the Lie derivative operator associated to it  
\begin{equation*}
L_{\alpha} : \mathcal{A}(\T^n_{s+\sigma}) \to \mathcal {A}(\T^n_s),\quad f \mapsto L_\alpha f = f'\cdot \alpha := \sum_{j=1}^n \alpha_j \frac{\partial f}{\partial\teta_j},
\end{equation*}
$f$ being an analytic function on $\T^n_{s+\sigma}$ with values in $\C$.
\begin{lemma}[Straightening dynamics on the torus]
Let $\alpha\in\R^n$ satisfy condition \eqref{Dio 1} and let $0<s<s+\sigma$. For every $g\in\mathcal{A}(\T^n_{s+\sigma},\C)$ having zero average on the torus, there exists a unique preimage $f\in\mathcal{A}(\T^n_s,\C)$ of zero average such that \[L_\alpha f = g;\] moreover, the following estimate holds\[\abs{f}_{s}=\abs{L_{\alpha}^{-1}g}_s\leq \frac{C_1}{\gamma}\frac{1}{ \sigma^{n+\tau}}\abs{g}_{s+\sigma},\] $C_1$ being a constant depending only on the dimension $n$ and the exponent $\tau$. 
\label{tangent lemma}
\end{lemma}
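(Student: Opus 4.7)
My plan is to pass to the Fourier side on $\T^n$, invert the operator mode-by-mode using the Diophantine lower bound \eqref{Dio 1}, and control the resulting series in the weighted norm by splitting the exponential weight into two pieces.

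\textbf{Step 1 (Fourier expansion and formal inversion).} Since $g\in\A(\T^n_{s+\sigma})$, it admits an absolutely convergent Fourier expansion $g(\teta)=\sum_{k\in\Z^n}g_k\,e^{i k\cdot\teta}$, with $g_0=0$ by the zero-average hypothesis, and $|g|_{s+\sigma}=\sum_{k}|g_k|\,e^{|k|(s+\sigma)}<\infty$. Writing $f(\teta)=\sum_{k}f_k\,e^{ik\cdot\teta}$ with $f_0=0$, the equation $L_\alpha f=g$ becomes $(i k\cdot\alpha)\,f_k=g_k$ for every $k\in\Z^n$. The assumption \eqref{Dio 1} ensures $k\cdot\alpha\neq 0$ for $k\neq 0$, so $f_k=g_k/(ik\cdot\alpha)$ is uniquely determined. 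Uniqueness of $f$ (of zero average) is then immediate from the injectivity of the Fourier expansion.

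\textbf{Step 2 (mode-wise small-divisor bound).} Using \eqref{Dio 1} one gets $|f_k|\leq \gamma^{-1}|k|^\tau |g_k|$ for all $k\neq 0$, so that formally
\[
|f|_s \;=\; \sum_{k\neq 0}|f_k|\,e^{|k|s}\;\leq\;\frac{1}{\gamma}\sum_{k\neq 0}|k|^\tau\,e^{-|k|\sigma}\,|g_k|\,e^{|k|(s+\sigma)}.
\]

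\textbf{Step 3 (splitting the weight).} I would split $e^{-|k|\sigma}=e^{-|k|\sigma/2}\cdot e^{-|k|\sigma/2}$ and use the two elementary estimates
\[
\sup_{x\geq 0} x^\tau e^{-x\sigma/2}\;\leq\;\bigl(2\tau/e\bigr)^\tau\,\sigma^{-\tau},\qquad \sum_{k\in\Z^n}e^{-|k|\sigma/2}\;=\;\Bigl(\tfrac{1+e^{-\sigma/2}}{1-e^{-\sigma/2}}\Bigr)^{\!n}\;\leq\; C'^{\,n}\sigma^{-n},
\]
the second one coming from the product structure $|k|=|k_1|+\cdots+|k_n|$ and the assumption $\sigma<1$. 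Plugging these into the estimate of Step 2 and factoring out $|g|_{s+\sigma}$ yields exactly the announced bound
\[
|f|_s\;\leq\;\frac{C_1}{\gamma\,\sigma^{n+\tau}}\,|g|_{s+\sigma},
\]
with $C_1=C_1(n,\tau)$. The bound also shows that the Fourier series defining $f$ converges absolutely in the weighted norm on $\T^n_s$, hence $f\in\A(\T^n_s,\C)$, closing the existence part.

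There is no real obstacle here: the argument is essentially a one-page Fourier-plus-Cauchy calculation. The only point that requires a bit of care is the splitting of the exponential weight in Step 3, because one factor must be reserved to tame the polynomial growth $|k|^\tau$ coming from the small divisors, while the other must still yield a convergent multidimensional geometric sum; both losses combine to give the $\sigma^{-(n+\tau)}$ factor.
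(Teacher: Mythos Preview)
Your proof is correct. It shares the Fourier-inversion skeleton with the paper; the only difference is in how the resulting lattice sum is handled in Step~3. The paper first uses the crude coefficient bound $|g_k|\leq |g|_{s+\sigma}e^{-|k|(s+\sigma)}$ to factor out $|g|_{s+\sigma}$, and then estimates $\sum_{k\neq 0}|k|^\tau e^{-|k|\sigma}$ by counting integer points with $|k|=\ell$ via a binomial coefficient and comparing with the integral $\int_0^\infty x^{n+\tau-1}e^{-x\sigma}\,dx=\sigma^{-(n+\tau)}\Gamma(n+\tau)$; your split-and-sup argument reaches the same exponent more quickly and without the Gamma function. Incidentally, your Step~2 already sets up a \emph{sharper} bound: since the weighted norm is an $\ell^1$-norm on the Fourier side, one may simply pull $\sup_{k}|k|^\tau e^{-|k|\sigma}\leq (\tau/e)^\tau\sigma^{-\tau}$ out of the sum $\sum_k|g_k|e^{|k|(s+\sigma)}=|g|_{s+\sigma}$ to obtain $|f|_s\leq C(\tau)\gamma^{-1}\sigma^{-\tau}|g|_{s+\sigma}$ with no $\sigma^{-n}$ loss at all; your detour through the second half of the split and the geometric sum is precisely what reintroduces that extra factor, matching the (non-optimal) exponent in the lemma as stated.
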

\begin{proof}
Let \begin{equation*}
g(\teta)= \sum_{k\in\Z^n\setminus\set{0}} g_k e^{i\, k\cdot \teta},
\end{equation*}
be the Fourier expansion of $g$. Coefficients $g_k$ decay exponentially:
\begin{equation*}\abs{g_k}=\abs{\int_{\T^n}g(\teta)e^{- i\,k\cdot\teta}\,\frac{d\teta}{2\pi}}\leq\abs{g}_{s+\sigma}e^{-\abs{k}(s+\sigma)},
\end{equation*}
obtaining the inequality by deforming the path of integration to $\Im\teta_j = - \sgn (k_j)(s+\sigma)$. Expanding the term $L_\alpha f$ too, we see that a formal solution of $L_\alpha f = g$ is given by 
\begin{equation}
f = \sum_{k\in\Z^n\setminus\set{0}} \frac{g_k}{i\,k\cdot \alpha} e^{i\, k\cdot \teta}.
\label{inversa tangente}
\end{equation}
Taking into account Diophantine condition \eqref{Dio 1} we have
\begin{align*}
\abs{f}_s &\leq \frac{\abs{g}_{s+\sigma}}{\gamma}\sum_k \abs{k}^\tau e^{-\abs{k}\sigma}
         \leq \frac{2^n\abs{g}_{s+\sigma}}{\gamma}\sum_{\ell \geq 1}\binom{\ell + n + 1}{\ell} e^{-\ell\sigma}\ell^\tau \\
         &\leq \frac{4^n\abs{g}_{s+\sigma}}{\gamma(n-1)!}\sum_{\ell \geq 1}(n + \ell - 1)^{n - 1 + \tau} e^{-\ell\sigma}\\
         &\leq \frac{4^n\abs{g}_{s+\sigma}}{\gamma(n-1)!}\int_1^\infty (\ell + n -1)^{n + \tau - 1} e^{-(\ell - 1)\sigma}\,d\ell.
\end{align*}
The integral is equal to 
\begin{align*}
&\sigma^{-\tau - n} e^{n\sigma}\int_{n\sigma}^\infty \ell^{\tau +n -1}e^{-\ell}\,d\ell\\
& < \sigma^{-\tau - n} e^{n\sigma} \int_0^\infty \ell^{\tau +n -1}e^{-\ell}\,d\ell = \sigma^{-\tau - n} e^{n\sigma} \Gamma (\tau + n).
\end{align*}
Hence $f\in\A(\T^n_s)$ and satisfies the claimed estimate.
\end{proof}

 Let 
\begin{equation*}
L_\alpha + A : \mathcal{A}(\T^n_{s+\sigma},\C^m) \to \mathcal {A}(\T^n_s,\C^m),\quad f\mapsto L_\alpha f + A\cdot f = f'\cdot \alpha + A\cdot f.
\label{cohomological2}
\end{equation*}
\begin{lemma}[Relocating the torus]
Let $\alpha\in\R^n$ and $A\in\Mat_m(\R)$ be a diagonalizable matrix satisfying the Diophantine condition \eqref{Dio 2}. For every $g\in\mathcal{A}(\T^n_{s+\sigma},\C^m)$, there exists a unique preimage $f\in\mathcal{A}(\T^n_{s},\C^m)$  by $L_\alpha + A$. Moreover the following estimate holds
\[\abs{f}_{s}=\abs{\pa{L_\alpha + A}^{-1} g }_s\leq \frac{C_2}{\gamma}\frac{1}{ \sigma^{n+\tau}}\abs{g}_{s+\sigma}, \] $C_2$ being a constant depending only on the dimension $n$ and the exponent $\tau$.
\label{normal lemma}
\end{lemma}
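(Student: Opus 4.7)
The plan is to reduce the problem to the scalar setting already handled in Lemma \ref{tangent lemma}, then rerun essentially the same Fourier/Diophantine estimate, with the key improvement that condition \eqref{Dio 2} now covers the zero-mode $k=0$ so no average has to be removed.

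First, since $A \in \Mat_m(\R)$ is diagonalizable, I would fix $P \in \GL_m(\C)$ with $P^{-1} A P = D := \operatorname{diag}(a_1,\ldots,a_m)$ and change unknowns via $\tilde f := P^{-1} f$, $\tilde g := P^{-1} g$. The equation $L_\alpha f + A f = g$ becomes $L_\alpha \tilde f + D \tilde f = \tilde g$, which decouples into the $m$ scalar equations $L_\alpha \tilde f_j + a_j \tilde f_j = \tilde g_j$, $j=1,\ldots,m$. Expanding in Fourier series $\tilde g_j(\teta) = \sum_{k\in\Z^n} \tilde g_{j,k}\,e^{ik\cdot\teta}$, the unique formal solution is
\begin{equation*}
\tilde f_j(\teta) \;=\; \sum_{k\in\Z^n}\frac{\tilde g_{j,k}}{i\, k\cdot\alpha + a_j}\, e^{i\, k\cdot\teta}.
\end{equation*}
Note that condition \eqref{Dio 2} applies for every $k\in\Z^n$ (including $k=0$), so every divisor is controlled and no compatibility condition on the average of $\tilde g_j$ is needed.

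Next I would run the same exponential-decay/Diophantine estimate as in the proof of Lemma \ref{tangent lemma}. The Cauchy-like bound $|\tilde g_{j,k}| \le |\tilde g_j|_{s+\sigma}\, e^{-|k|(s+\sigma)}$ combined with $|ik\cdot\alpha + a_j|^{-1} \le \gamma^{-1}(1+|k|)^\tau$ yields
\begin{equation*}
|\tilde f_j|_{s} \;\le\; \frac{|\tilde g_j|_{s+\sigma}}{\gamma}\sum_{k\in\Z^n}(1+|k|)^\tau e^{-|k|\sigma},
\end{equation*}
and the lattice-point sum is majorised, exactly as before, by a constant depending only on $n$ and $\tau$ times $\sigma^{-(n+\tau)}$ (using the $\Gamma$-function bound on $\int_0^\infty \ell^{\tau+n-1} e^{-\ell}\,d\ell$). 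This gives $|\tilde f|_s \le \tfrac{C}{\gamma\sigma^{n+\tau}}|\tilde g|_{s+\sigma}$.

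Finally, reverting to $f = P\tilde f$ and $\tilde g = P^{-1} g$ yields the claimed estimate, with $C_2$ obtained from $C$ by absorbing $\|P\|\,\|P^{-1}\|$ (a harmless factor depending only on $A$, hence on fixed data; one may also invoke a unitary diagonalisation in case $A$ is normal to keep $C_2$ depending solely on $n,\tau$). Uniqueness is immediate: if $L_\alpha f + A f = 0$, the same Fourier computation forces every coefficient $\tilde f_{j,k}$ to vanish because the divisors $i\,k\cdot\alpha + a_j$ never vanish under \eqref{Dio 2}. I do not anticipate a genuine obstacle here; the only subtle point is the treatment of the $k=0$ mode, which is precisely why \eqref{Dio 2} is stated for all $k\in\Z^n$ rather than for $k\neq 0$ as in \eqref{Dio 1}.
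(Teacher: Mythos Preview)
Your proposal is correct and follows essentially the same route as the paper: reduce to the diagonal (hence scalar) case via a conjugation $P$, solve coefficient-wise in Fourier with the divisors $i\,k\cdot\alpha + a_j$ controlled by \eqref{Dio 2} for all $k\in\Z^n$ (including $k=0$), and invoke the same $\Gamma$-function lattice-sum bound as in Lemma~\ref{tangent lemma}. Your remark that the factor $\|P\|\,\|P^{-1}\|$ in principle enters $C_2$ is a fair observation; the paper's proof has the same dependence but does not comment on it.
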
 
\begin{proof}
Let us start for simplicity with the scalar case $g\in\mathcal{A}(\T^n_{s+\sigma})$ and $A=a\neq 0\in\R$. Expanding both sides of $L_\alpha f + a\cdot f = g $ we see that the Fourier coefficients of the formal preimage $f$ is given by \[f_k= \frac{g_k}{i k\cdot\alpha + a},\] hence 
\begin{equation}
f = (L_\alpha + a)^{-1}g = \sum_{k\in{\Z^n}}\frac{g_k}{i k\cdot\alpha + a}\,e^{i k\cdot\teta}.
\label{inversa normale}
\end{equation}
Taking now into account the Diophantine condition and doing the same sort of calculations as in Lemma \ref{tangent lemma}, we get the wanted estimate.

The case where $A$ is a diagonal matrix can be recovered from the scalar one just by noticing that to $g(\teta)=\pa{g^1(\teta),\ldots,g^m(\teta)}$ correspond a preimage $f(\teta)= (f^1(\teta),\ldots,f^m(\teta))$ whose components read like in the scalar case.

When $A$ is diagonalizable, let $P\in\GL_n(\C)$ such that $PAP^{-1}$ is diagonal. Considering $f'\cdot\alpha + A\cdot f = g$, and left multiplying both sides by $P$, we get \[\tilde{f}'\cdot\alpha + PAP^{-1}\tilde{f}=\tilde{g},\] where we have set $\tilde{g}=Pg$ and $\tilde{f}=Pf$. This equation has a unique solution with the wanted estimates. We just need to put $f= P^{-1}\tilde{f}$. 

\end{proof}

Finally, consider an analytic function $F$ on $\T^n_{s+\sigma}$ with values in $\Mat_m(\C)$. Define the operator 
\begin{equation*}
\amat{llcl}{
L_\alpha + [A, \cdot]:\, & \A(\T^n_{s+\sigma},\Mat_m(\C)) &\to &\A(\T^n_s,\Mat_m(\C))\\
& F &\mapsto &L_\alpha F + [A,F]},
\label{cohomological3}
\end{equation*}
where here the notation $L_\alpha F$ (or $F'\cdot\alpha$) means that we are applying the Lie derivative operator to each component of the matrix $F$, and $[A,F]$ is the usual commutator.

\begin{lemma}[Straighten the first order dynamics] Let $\alpha\in\R^n$ and $A\in\Mat_m(\R)$ be a diagonalizable matrix satisfying  the Diophantine conditions \eqref{Dio 1} and \eqref{Dio 3} respectively. For every $G\in\A(\T^n_{s+\sigma},\Mat_m(\C))$, such that $\int_{\T^n} G^i_i \, \frac{d\teta}{(2\pi)^n}= 0$, there exists a unique $F\in\A(\T^n_{s},\Mat_m(\C))$, having zero average diagonal elements $\int_{\T^n} F^i_i \, \frac{d\teta}{(2\pi)^n}= 0$, such that the matrix equation \[L_\alpha F + [A,F] = G\] is satisfied; moreover the following estimate holds
\[\abs{F}_s \leq \frac{C_3}{\gamma}\frac{1}{\sigma^{n+\tau}}\abs{G}_{s+\sigma},\]
$C_3$ being a constant depending only on the dimension $n$ and the exponent $\tau$. 
\label{matrix lemma}
\end{lemma}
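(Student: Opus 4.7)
The plan is to mimic the structure of the two preceding cohomological lemmas, reducing the matrix equation to a collection of scalar ones. First I would conjugate $A$ to diagonal form exactly as at the end of the proof of Lemma \ref{normal lemma}: choose $P\in\GL_m(\C)$ with $PAP^{-1}=D=\operatorname{diag}(a_1,\dots,a_m)$ and set $\tilde F=PFP^{-1}$, $\tilde G=PGP^{-1}$. Since $P$ is constant, $L_\alpha \tilde F = P(L_\alpha F)P^{-1}$ and $[D,\tilde F]=P[A,F]P^{-1}$, so the equation becomes $L_\alpha\tilde F+[D,\tilde F]=\tilde G$. The zero-diagonal-average hypothesis on $G$ does not transport literally through a non-unitary $P$, but this is not a problem: we solve the diagonal form of the equation first and then transform back, reimposing the normalization at the level of $F$.

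With $A$ diagonal, the commutator acts entry-wise as $[A,F]^i_j=(a_i-a_j)F^i_j$, so the matrix equation decouples into $m^2$ independent scalar cohomological equations $L_\alpha F^i_j+(a_i-a_j)F^i_j=G^i_j$. For $i\neq j$, writing $a_i-a_j=l\cdot a$ with $l=e_i-e_j\in\Z^m$, $|l|=2$, the Diophantine condition \eqref{Dio 3} provides
\[
|ik\cdot\alpha+(a_i-a_j)|\geq\frac{\gamma}{(1+|k|)^\tau}\qquad\forall k\in\Z^n,
\]
and, expanding $F^i_j$ and $G^i_j$ in Fourier series, the unique formal solution
\[
F^i_j=\sum_{k\in\Z^n}\frac{G^i_j(k)}{ik\cdot\alpha+(a_i-a_j)}\,e^{ik\cdot\teta}
\]
is estimated by the very same Gamma-function integral argument used in Lemma \ref{normal lemma}. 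For $i=j$ the operator collapses to $L_\alpha$, so the zero-average assumption on $G^i_i$ guarantees, by Lemma \ref{tangent lemma}, the existence of a unique zero-average preimage $F^i_i$ with the same type of bound. Assembling the $m^2$ entries gives $F$ in $\A(\T^n_s,\operatorname{Mat}_m(\C))$ with $|F|_s\leq\frac{C}{\gamma}\sigma^{-(n+\tau)}|G|_{s+\sigma}$.

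Pulling back through $P$, the matrix $F=P^{-1}\tilde F P$ solves the original equation, and the constants of $P$ and $P^{-1}$ are absorbed into $C_3$. Uniqueness under the diagonal zero-average normalization follows from uniqueness of Fourier coefficients for the off-diagonal entries (the denominators never vanish by \eqref{Dio 3}, including at $k=0$, where we recover the separation $|a_i-a_j|\geq\gamma$ of the eigenvalues) and from the zero-average choice for the diagonal entries. The only slightly delicate point — and hence the main bookkeeping obstacle — is the interplay between the zero-average normalization, which is intrinsically attached to the diagonal entries of $F$ in the original basis, and the conjugation by $P$; this is handled by solving with the natural normalization in the diagonal basis and observing that the solution map is well-defined modulo the obvious finite-dimensional kernel, which is precisely what the normalization kills. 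All weighted-norm estimates are the same as in Lemmas \ref{tangent lemma} and \ref{normal lemma}, so no new analytic ingredient is needed.
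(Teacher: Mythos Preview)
Your proposal is correct and follows essentially the same route as the paper: reduce to diagonal $A$ by conjugation, decouple the matrix equation into $m$ scalar equations of Lemma~\ref{tangent lemma} type on the diagonal and $m^2-m$ scalar equations of Lemma~\ref{normal lemma} type off the diagonal, then conjugate back. You even flag the normalization-versus-conjugation bookkeeping that the paper's own proof passes over in silence.
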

\begin{proof}
Let us start with the diagonal case. Let $A=\operatorname{diag}(a_1,\ldots,a_m)\in\R^m$ be diagonal and $F\in\Mat_m(\C)$ be given, the commutator $[A,F]$ reads
\begin{equation}
\pmat{0 & (a_1 - a_2) F^1_2 & (a_1 - a_3) F^1_3 & \ldots  & (a_1 - a_m) F^1_m \\
        (a_2 - a_1) F^2_1 & 0 & (a_2 - a_3) F^2_3 & \ldots & (a_2 - a_m) F^2_m\\
        \vdots & \vdots & \vdots &\vdots & \vdots\\
        (a_m - a_1) F^m_1 & (a_m - a_2) F^m_2 & \ldots & \ldots & 0 },
        \label{commutator}
        \end{equation}
               where we called $F^i_j$ the element corresponding to the $i$-th line and $j$-th column of the matrix $F(\teta)$. Using components notation, the matrix reads
        \[\pa{[A,F]^i_j} = \pa{(a_i - a_j)F^i_j},\]
           and shows all zeros along the diagonal. 
Adding it now up with the matrix $L_\alpha F$, which reads 
\begin{equation}
\pmat{L_\alpha F^1_1 &\ldots & L_\alpha F^1_m\\
\vdots & L_\alpha F^i_j & \vdots\\
L_\alpha F^n_1 &\ldots & L_\alpha F^m_m },
\label{Lie matrix}
\end{equation}
we see that to solve the equation $L_\alpha F + [A,F] = G$, $G$ being given, we need to solve $n$ equations of the type of Lemma \ref{tangent lemma} and $m^2 - m$ equations 
of the type of Lemma \ref{normal lemma}. Expanding every element in Fourier series, we see that the formal solution is given by a matrix $F$ whose diagonal elements are of the form \[F^j_j= \sum_{k\in{\Z^n\setminus \set{0}}}\frac{G^j_{j,k}}{i k\cdot\alpha}\,e^{i k\cdot\teta},\] while the non diagonal are of the form \[F^i_j= \sum_{k\in{\Z^n}}\frac{G^i_{j,k}}{i k\cdot\alpha + (a_i - a_j)}\,e^{i k\cdot\teta}.\] By conditions \eqref{Dio 1}-\eqref{Dio 3}, via the same kind of calculations we did in the previous lemmata, we get the wanted estimate.

Eventually, to recover the general case, we consider the transition matrix $P\in\GL_m(\C)$ and the equation \[L_\alpha (PFP^{-1}) + P\sq{A,F}P^{-1} = PGP^{-1},\] and observe that we can see $P\sq{A,F}P^{-1}$ as \[P\sq{A,F}P^{-1}= PAP^{-1}PFP^{-1} - PFP^{-1}PAP^{-1}=\sq{PAP^{-1},PFP^{-1}}.\] Letting $\tilde{F}=PFP^{-1}$ and $\tilde{G}=PGP^{-1}$, $\tilde{F}$ satisfies the wanted estimates, and $G=P^{-1}\tilde{G}P$.
\end{proof}
We address the reader looking for optimal estimates to the paper of R\"ussmann \cite{Russmann:1975}.\\ Remark that in case of a real eigenvalue condition \eqref{Dio 2} is redundant. Condition \eqref{Dio 1} suffices, choosing $\gamma < \min_j (\Re a_j)$. 
\subsection{Inversion of the operator $\phi$: estimates on $\phi'^{-1}$ and $\phi''$}
\label{subsection The infinitesimal equation}
The following theorem represents the main result of this first part, from which Moser's theorem \ref{Moser theorem} follows. \\ Let us fix $u^0\in\U_s(\alpha,A)$ and note $\V^{\sigma}_{s+\sigma}=\set{v\in\V : \abs{v - u^0}_s<\sigma}$ the ball of radius $\sigma$ centered at $u^0$.
\begin{thm}
\label{abstract Moser} The operator $\phi$ is a local diffeomorphism in the sense that for every $s<s+\sigma<1$ there exist $\eps>0$ and a unique $C^\infty$-map $\psi$ $$\psi : \V^\eps_{s+\sigma}\to \G_{s}\times\U_s(\alpha,A)\times\Lambda$$ such that $\phi\circ\psi = \id.$ Moreover $\psi$ is Whitney-smooth with respect to $(\alpha,A)$. 
\end{thm}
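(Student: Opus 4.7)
The plan is to deduce the theorem by applying the abstract inverse function theorem \ref{teorema inversa} to $\phi$. The hypotheses to check are two-fold: bijectivity of the linearization $\phi'(g,u,\lambda)$ in a neighborhood of $(\id, u^0, 0)$ together with tame estimates on $\phi'^{-1}$ accounting for the loss of analyticity width $\sigma$; and a quadratic bound on the second derivative $\phi''$. The latter follows from Cauchy's inequality, the algebra property of the weighted norm, and the well-definedness of $\phi$ established in Theorem \ref{theorem well def} and Corollary \ref{cor well def}; the real work lies in the former.

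To invert $\phi'$, I would first compute it using the variation formula $\frac{d}{dt}\big|_0 (g_t)_{\ast} u = [\delta g\circ g^{-1}, g_{\ast}u]$, obtaining
\[
\phi'(g,u,\lambda)\cdot(\delta g,\delta u,\delta\lambda) = [\delta g\circ g^{-1}, g_{\ast}u] + g_{\ast}\delta u + \delta\lambda.
\]
The crucial observation is that pulling back by $g$ via the deformed norm $|\cdot|_{g,s}$ turns the equation $\phi'\cdot(\delta g,\delta u,\delta\lambda) = \delta v$ into
\[
[\delta G, u] + \delta u + g^{\ast}\delta\lambda = g^{\ast}\delta v,\qquad \delta G = (g')^{-1}\cdot\delta g,
\]
absorbing the $g$-dependence into the right-hand side and into a small perturbation of the reference operator $[\,\cdot\,, u^0] + \delta u + \delta\lambda$ at $(\id, u^0, 0)$.

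Expanding $u^0 = (\alpha + O(r), A\cdot r + O(r^2))$ and projecting the reference equation in powers of $r$ splits it into four blocks solvable in cascade. The zeroth-order $\teta$-component is a scalar equation of type \eqref{Dio 1} for $\delta\varphi$, solved by Lemma \ref{tangent lemma} once the mean is absorbed by the counter-term $\delta\beta$. The zeroth-order $r$-component is a vector equation of type \eqref{Dio 2} for $\delta R_0$, solved by Lemma \ref{normal lemma} once $\delta b\in\ker A$ absorbs the kernel obstruction. The first-order $r$-component is a matrix equation of type \eqref{Dio 3} for $\delta R_1$, solved by Lemma \ref{matrix lemma} after the part of $\delta B$ with $[A,\delta B]=0$ cancels the resonant diagonal modes. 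All higher-order terms in $r$ are absorbed algebraically by $\delta u\in T_{u^0}\U(\alpha,A)$. The uniqueness conditions $A\cdot b = 0$ and $[A,B]=0$ built into $\Lambda$ make $\delta\lambda$ unique, and $\phi'(\id, u^0, 0)$ becomes a bijection whose inverse loses a factor of order $\gamma^{-1}\sigma^{-n-\tau}$. For general $(g,u,\lambda)$ close to $(\id, u^0, 0)$, a Neumann-series argument in the deformed norm upgrades this to invertibility of $\phi'(g,u,\lambda)$ with a comparable estimate. Plugging these bounds, together with the control of $\phi''$, into Theorem \ref{teorema inversa} delivers the inverse $\psi$ on a small ball $\V^{\eps}_{s+\sigma}$. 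Whitney smoothness in $(\alpha,A)$ is then inherited from the explicit Fourier representations \eqref{inversa tangente} and \eqref{inversa normale} of the cohomological inverses, which depend smoothly on $(\alpha, A)$ throughout the Diophantine set at fixed $(\gamma,\tau)$.

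The main obstacle is the bookkeeping of analyticity losses: each pull-back or push-forward by $g$ shrinks the domain by a fraction of $\sigma$, and each call to a cohomological lemma costs a factor $\gamma^{-1}\sigma^{-n-\tau}$. Arranging these losses so that the quadratic convergence provided by Theorem \ref{teorema inversa} dominates the linear degradation of the Banach scale is the delicate part of the construction, and it is precisely for this reason that the scheme $0<\sigma<s<s+\sigma<1$ is imposed from the outset.
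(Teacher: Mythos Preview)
Your overall strategy matches the paper's: verify tame estimates on $\phi'^{-1}$ (Proposition~\ref{proposition infinitesimal}) and on $\phi''$ (Lemma~\ref{lemma derivata seconda}), then invoke Theorem~\ref{teorema inversa}. But your route to invertibility of $\phi'(g,u,\lambda)$ at a general base point has a real gap. You invert the linearization only at $(\id,u^0,0)$ and then appeal to ``a Neumann-series argument in the deformed norm'' for nearby $(g,u,\lambda)$. In a scale of Banach spaces where $\phi'^{-1}$ loses a fixed power $\sigma^{-\tau'}$ of analyticity width, a Neumann series $\sum_k\bigl(\phi'(x_0)^{-1}(\phi'(x_0)-\phi'(x))\bigr)^k$ does not converge: each factor costs a fresh loss of width, and no fixed $E_s$ can contain the sum. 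This is exactly the obstruction that forces one to use a Nash--Moser scheme instead of the ordinary inverse function theorem, and it reappears verbatim at the level of $\phi'$.

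The paper sidesteps this entirely. After pulling back by $g$, the equation $[u,\dot g]+\delta u+\dot\lambda=\dot v$ involves $u$, not $u^0$; but since every $u\in\U(\alpha,A)$ shares the \emph{same} principal part $(\alpha,A\cdot r)$, the cohomological equations \eqref{Teta}--\eqref{R_1} carry the same leading operators $L_\alpha$, $L_\alpha-A$, $L_\alpha+[A,\cdot]$ regardless of which $u$ one is at---only the bounded coupling coefficients $u_1,U_2$ change. One therefore solves the triangular system \emph{directly} at $(g,u,\lambda)$, with a finite loss of width and no iteration. Note also that the correct order of the cascade is $\dot R_0$ first (equation \eqref{R_0} involves only $\alpha,A$), then $\dot\varphi$ (equation \eqref{Teta} needs $u_1\cdot\dot R_0$), then $\dot R_1$; your ordering of $\dot\varphi$ before $\dot R_0$ would not close. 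Two further points are missing from your sketch: when $g\neq\id$ the pulled-back counter-term $\dot\lambda=g^*\delta\lambda$ is no longer constant, and the paper determines the unique $\delta\lambda$ via the finite-dimensional map $F_g:\Lambda\to\Lambda$, a perturbation of $-\id$; and the proof must verify log-convexity of the weighted norm, after which uniqueness, $C^\infty$-regularity and Whitney smoothness come from Propositions~\ref{lipschitz}, \ref{smoothness} and \ref{Whitney} rather than from direct inspection of the Fourier formulas.
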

This result will follow from the inverse function theorem \ref{teorema inversa} and regularity prepositions \ref{smoothness}-\ref{lipschitz}.\\ In order to solve locally $\phi(x)=y$, we use the remarkable idea of Kolmogorov and find the solution by composing infinitely many times the operator \[x = (g,u,\lambda)\mapsto x + \phi'^{-1}(x)(y - \phi(x)),\] on extensions $\normal{T}^n_{s+\sigma}$ of shrinking width.\\At each step of the induction, it is necessary that $\inderiv{\phi}(x)$ exist at an unknown $x$ (not only at $x_0$) in a whole neighborhood of $x_0$ and that $\inderiv{\phi}$ and $\phi''$ satisfy a suitable estimate, in order to control the convergence of the iterates.\\ Thus let us start to check the existence of a right inverse for 
\[\phi'(g,u,\lambda): T_g\mathcal{G}^{\sigma/n}_{s+\sigma}\times\overrightarrow{\mathcal{U}}_{s+\sigma}\times\Lambda\to\mathcal{V}_{g,s},\] if $g$ is close to $\id$. We indicated with $\overrightarrow{\U}$ the vector space directing $\U(\alpha,A)$.
\begin{prop}
\label{proposition infinitesimal}
There exists $\eps_0$ such that if $(g,u,\lambda)$ are in $\G_{s+\sigma}^{\eps_0}\times\U_{s+\sigma}(\alpha,A)\times\Lambda$ then for every $\delta v$ in $\mathcal{V}_{g,s+\sigma}$ there exists a unique triplet $(\delta g,\delta u, \delta \lambda)\in T_g\mathcal{G}_{s}\times\overrightarrow{\mathcal{U}_{s}}\times\Lambda$ such that 
\begin{equation}
\phi'(g,u,\lambda)\cdot (\delta g,\delta u, \delta \lambda) = \delta v;
\end{equation}
\label{linearized equation}
moreover, we have the following estimate
\begin{equation}
\max\,(\abs{\delta g}_s,\abs{\delta u}_s,\abs{\delta\lambda})\leq \frac{C'}{\sigma^{\tau'}}\abs{\delta v}_{g,s+\sigma},
\label{linear bounds}
\end{equation}
where $\tau'>1$ and $C'$ is a constant that depends only on $\abs{\pa{g - \id, u - (\alpha, A\cdot r)}}_{s+\sigma}$.
\end{prop}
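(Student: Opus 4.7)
The plan is to linearise $\phi$ explicitly at $(g, u, \lambda)$, pull the resulting equation back by $g$ to put it on the undeformed domain, and then reduce it---up to perturbations of size $O(\abs{u - u^0}) + O(\abs{g - \id})$---to the three cohomological equations solved in Lemmata \ref{tangent lemma}--\ref{matrix lemma}. A final Neumann-series argument will absorb the perturbations and yield the bounded right inverse with estimate \eqref{linear bounds}.

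First I would compute the derivative. Writing $\xi = \delta g\circ g^{-1}$, a direct calculation gives
$$\phi'(g,u,\lambda)\cdot(\delta g,\delta u,\delta\lambda) = -[\xi,\push{g}u] + \push{g}\delta u + \delta\lambda,$$
where $[\cdot,\cdot]$ is the Lie bracket of vector fields. Applying $\pull{g}$ to both sides of the equation $\phi'(g,u,\lambda)\cdot(\delta g,\delta u,\delta\lambda) = \delta v$ and setting $\eta := \pull{g}\xi = g'^{-1}\cdot\delta g$, I obtain the equivalent cohomological equation
$$[u,\eta] + \delta u + \pull{g}\delta\lambda = \pull{g}\delta v,$$
whose right-hand side is exactly of size $\abs{\delta v}_{g,s+\sigma}$ by the very definition of the deformed norm. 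A short calculation shows that $\eta$ still has the Lie-algebra form $(\tilde\varphi(\teta), \tilde R_0(\teta) + \tilde R_1(\teta)\cdot r)$, the new coefficients being obtained from $(\delta\varphi, \delta R_0, \delta R_1)$ by invertible operations depending on $g$.

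Next I would solve the principal version of this equation, obtained by replacing $u$ by $u^0 = (\alpha, A\cdot r)$ and $\pull{g}\delta\lambda$ by $\delta\lambda$. Computing $[u^0,\eta]$ order by order in $r$, its tangent component at $r=0$ reads $L_\alpha\tilde\varphi$ (up to sign), its normal component at $r=0$ reads $(L_\alpha \pm A)\tilde R_0$, and its normal component at order one in $r$ reads $(L_\alpha \pm [A,\cdot])\tilde R_1$. Since $\delta u\in\overrightarrow{\U}$ must be $O(r)$ tangentially and $O(r^2)$ normally, these three pieces of $\eta$ are forced to carry the full obstruction at those orders. The counter-term $\delta\lambda = (\delta\beta,\delta b + \delta B\cdot r)$ is then chosen precisely to absorb the cokernels of Lemmata \ref{tangent lemma}--\ref{matrix lemma}: $\delta\beta$ takes the $\T^n$-mean of the tangent $r=0$ part, $\delta b$ takes the projection onto $\ker A$ of the normal $r=0$ part, and $\delta B$ takes the diagonal $\T^n$-mean of the linear-in-$r$ part, the structural constraints $A\cdot\delta b = 0$ and $[A,\delta B] = 0$ matching exactly the dimensions of these cokernels. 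The three lemmata then yield $\tilde\varphi, \tilde R_0, \tilde R_1$ with the standard loss $1/(\gamma\sigma^{n+\tau})$, after which $\delta u\in\overrightarrow{\U_s}$ is uniquely determined by the higher-order-in-$r$ remainder.

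To close the argument for general $(g,u)$ near $(\id,u^0)$, I would write $[u,\eta] = [u^0,\eta] + [u - u^0,\eta]$ and $\pull{g}\delta\lambda = \delta\lambda + (\pull{g} - \id)\delta\lambda$, treating both extra terms as perturbations of the principal operator, of respective sizes $O(\abs{u - u^0}_{s+\sigma})$ and $O(\abs{g - \id}_{s+\sigma})$ times the unknown $(\eta, \delta\lambda)$. A Neumann series on the nested analytic norms then converges once $\eps_0$ is small enough, producing a bounded right inverse of $\phi'(g,u,\lambda)$ with norm dominated by $C'/\sigma^{\tau'}$. The hard part will be the bookkeeping of the three coupled losses of regularity---the $s+\sigma \to s$ shrinkage built into each cohomological lemma, the further loss from the pull-back by $g$, and the factor consumed by the Neumann iteration---together with handling $(\pull{g} - \id)\delta\lambda$, which leaves the finite-dimensional space $\Lambda$ and must be carried as a perturbation of the obstruction itself rather than of the unknown; this is what forces $\tau' > n+\tau$ and makes $C'$ depend on $\abs{(g - \id, u - u^0)}_{s+\sigma}$.
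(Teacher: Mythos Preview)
Your setup---linearise, pull back by $g$, and reduce to the three cohomological equations of Lemmata \ref{tangent lemma}--\ref{matrix lemma}---is exactly what the paper does. The divergence is in how you pass from the principal operator $[u^0,\cdot]$ to the full $[u,\cdot]$.

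The paper does \emph{not} run a Neumann series. Instead it observes that, once expanded at orders $r^0$ and $r^1$, the system is \emph{triangular}: the equation for $\dot R_0$ (order~$0$ normal) involves only $\alpha$ and $A$, with no trace of $u_1$ or $U_2$; it is solved first. The equations for $\dot\varphi$ and $\dot R_1$ then carry $u_1\cdot\dot R_0$, $\dot R_0'\cdot u_1$, $U_2\cdot\dot R_0$ as already-known forcing terms, not as operators on the current unknowns. So there is no iteration at all: each cohomological equation is solved once, the $u$-dependent pieces enter multiplicatively on previously determined data, and their size simply goes into the constant $C'$. This is why the proposition allows \emph{arbitrary} $u\in\U_{s+\sigma}(\alpha,A)$---no smallness on $u-u^0$ is assumed, only on $g-\id$.

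Your Neumann-series plan, as written, has two issues. First, you implicitly assume $\abs{u-u^0}$ is controlled by $\eps_0$, but in the statement $\eps_0$ bounds only $\abs{g-\id}$; the size of $u-u^0$ must be absorbed into $C'$, not used as a smallness parameter for convergence. Second, an honest infinite Neumann series here would fail: each application of the principal inverse costs $\sigma^{-(n+\tau)}$ and each Lie bracket $[u-u^0,\cdot]$ costs another $\sigma^{-1}$, so the losses compound and no choice of nested widths rescues the sum. The reason a perturbative argument \emph{can} be made to work is precisely the triangularity above: the perturbation operator is nilpotent of order two in the $(R_0,\varphi,R_1)$ grading, so the series terminates after one correction. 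If you add that observation, your approach collapses to the paper's direct substitution; without it, the argument does not close. Your treatment of $(\pull{g}-\id)\delta\lambda$ as a finite-dimensional perturbation requiring $\eps_0$ small is correct and matches the paper's map $F_g$.
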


\begin{proof}
Let a vector field $\delta v$ in $\V_{g,s+\sigma}$ be given, we want to solve the linearized equation \[\phi'(g,u,\lambda)\cdot (\delta g,\delta u, \delta \lambda) = \delta v,\] where $\delta v$ is the data, and the unknowns are $\delta u \in O(r)\times O(r^2)$, $\delta g$ (geometrically a vector field along $g$) and $\delta\lambda\in\Lambda$. Calculating explicitly the left hand side of the equation, we get 
\begin{equation}
\sq{\push{g}u,\delta g \circ g^{-1}} + \push{g}\delta u + \delta \lambda = \delta v.
\label{explicit linearized}
\end{equation}

Both sides are supposed to belong to $\V_{g,s+\sigma}$; in order to solve the equation we pull it back, obtaining an equation between germs along the standard torus $\text{T}^n_0$ (as opposed to the $g$-dependent torus $g(\text{T}^n_0)$). By naturality of the Lie bracket with respect to the pull-back operator, we thus obtain the equivalent system in $\V_{s+\sigma}$
\begin{equation*}
\sq{u,\pull{g}\delta g \circ g^{-1}} + \delta u + \pull{g} \delta \lambda = \pull{g}\delta v.
\end{equation*}
To lighten the notation we baptize the new terms as \[\dot\lambda:=\pull{g} \delta \lambda,\qquad \dot v:=\pull{g}\delta v,\qquad \dot{g}= \pull{g}\delta g\circ g^{-1}= g'^{-1}\cdot\delta g\] and read
\begin{equation}
\sq{u,\dot{g}} + \delta u + \dot {\lambda} = \dot{v}.
\label{pullback linearized}
\end{equation}
 The unknowns are now $\dot{g}$ (geometrically a germ of vector fields along $\text{T}^n_0$), $\delta u$ and $\dot{\lambda}$; the new infinitesimal vector field of counter terms $\dot{\lambda}$ is no more constant in general, on the other hand, we can take advantage of $u$ in its "straight" form.\\ Let us expand the vector fields along $\T^n_{s+\sigma}\times\set{0}$; we obtain 
\[\eqsys{
u(\teta,r)= \pa{\alpha + u_1(\teta)\cdot r + O(r^2), A\cdot r + U_2(\teta)\cdot r^2 + O(r^3)}\\
\dot {g}(\teta,r) = \pa{\dot\varphi(\teta) , \dot {R}_0(\teta) + \dot {R}_1(\teta)\cdot r} \\
\dot\lambda(\teta,r) = \pa{\dot \lambda_0(\teta) , \dot \Lambda_0(\teta) + \dot \Lambda_1(\teta)\cdot r }\\
\dot v(\teta,r) = \pa{\dot v_0(\teta) + O(r), \dot V_0(\teta) + \dot V_1(\teta)\cdot r + O(r^2)}.}
 \]

We are interested in normalizing the dynamics tangentially at the order zero with respect to $r$, while up to the first order in the normal direction; we then consider the "mixed jet" : 
\[j^{0,1}\dot{v} = \pa{\dot v_0(\teta), \dot V_0(\teta) + \dot V_1(\teta)\cdot r}.\]
Using the expression
\begin{align*}
\sq{u,\dot g} &= \pa{\dot{\varphi}'\cdot\alpha - u_1\cdot \dot{R}_0 + O(r^2)}\base{\teta} +\\ &\pa{\dot{R}'_0\cdot\alpha - A\cdot \dot{R}_0 + ([A,\dot{R}_1] + \dot{R}_1'\cdot\alpha + \dot{R}_0'\cdot u_1 - 2 U_2\cdot \dot{R}_0)\cdot r  + O(r^2)}\base{r}
\end{align*}
and identifying terms of the same order in \eqref{pullback linearized}, yelds
\begin{align}
\label{Teta}
\dot\varphi' \cdot \alpha - u_1\cdot \dot{R_0} &= \dot {v}_0 - \dot{\lambda}_0,\\
\label{R_0}
\dot{R}_0'\cdot \alpha - A\cdot \dot{R}_0 &= \dot{V}_0 - \dot{\Lambda}_0,\\
\label{R_1}
[A,\dot{R}_1] + \dot{R}_1'\cdot\alpha + \dot{R}_0'\cdot u_1 - 2 U_2\cdot \dot{R}_0 &= \dot{V}_1 - \dot{\Lambda}_1,
\end{align}
where the first equation concerns the tangent direction and \eqref{R_0}-\eqref{R_1} the normal direction. This is a triangular system that, starting from \eqref{R_0}, we are able to solve; actually these equations are of the same type as the ones we already solved in Lemmata \ref{tangent lemma}-\ref{normal lemma}-\ref{matrix lemma} (in the sense of their projection on the image of the operator $[u,\dot{g}]$). \\We remark that since $\delta u = (O(r),O(r^2))$, $j^{0,1}\delta u=0$ and $\delta u$ has no contribution to the previous equations. Once we have solved them, we will determine $\delta u$ identifying the reminders.
\begin{rmk}
Every equation contains two unknowns: the components of $\dot g$ and $\dot \lambda$, and the given $\dot v$. We start to solve equations modulo $\dot\lambda$, eventually $\delta\lambda$ will be uniquely chosen to kill the component of the right hand side belonging to the kernel of $[u,\dot{g}]$ (i.e. the constant part of the given terms in \eqref{Teta}-\eqref{R_0}-\eqref{R_1} belonging to the kernel of $A$ and $[A, \cdot]$ respectively), and solve the cohomological equations.
\end{rmk}
Let us proceed with solving the system. We are going to repeatedly apply lemmata \ref{tangent lemma}-\ref{normal lemma}-\ref{matrix lemma} and Cauchy's inequality. Furthermore, we do not keep track of constants - just know that they depend only on $n, \tau>0$ (from the Diophantine condition), $\abs{g - \id}_{s+\sigma}$ and $\abs{(u - (\alpha, A\cdot r))}_{s+\sigma}$ - and hence refer to them as $C$.\\

First, consider \eqref{R_0}. Defining $\bar b = \int_{\T^n} \dot V_0 - \dot\Lambda_0\,\frac{d\teta}{(2\pi)^n}$, we have
\begin{equation*}
\dot{R}_0 = (L_\alpha + A)^{-1} (\dot{V_0} - \dot{\Lambda}_0 - \bar b),  
\end{equation*}
and
\[\abs{\dot{R}_0}_{s} \leq \frac{C}{\gamma}\,\frac{1}{\sigma^{n+\tau}}\abs{\dot{V}_0 - \dot\Lambda_0 }_{s+\sigma}. \]

Secondly, consider equation \eqref{Teta}. Calling the average
\[\bar\beta = \int_{\T^n} \dot{v_0} + u_1\cdot \dot{R}_0 - \dot{\lambda}_0\,\frac{d\teta}{(2\pi)^n}, \]
the solution reads
\[\dot{\varphi}= L_\alpha^{-1} (\dot{v_0} + u_1\cdot \dot{R}_0 - \dot{\lambda}_0 - \bar\beta),\]
with
\[\abs{\dot\varphi}_{s-\sigma}\leq \frac{C}{\gamma}\,\frac{1}{\sigma^{n+\tau}}\abs{\dot{v}_0 + u_1\cdot{\dot{R}}_0- \dot\lambda_0}_{s}.\]
Thirdly, the $\Mat_m(\C)$-valued solution of \eqref{R_1} reads
\[\dot{R}_1 = (L_\alpha + [A,\cdot])^{-1} (\dot{\tilde{V}}_1 + \dot{\Lambda}_1 - \bar{B}),\]
having defined $\dot{\tilde{V}}_1= \dot{V}_1 -\dot{R}_0'\cdot u_1 +2 U_2\cdot \dot{R}_0$, and $\bar{B}$ being the average \[\bar{B}=\int_{\T^n}\dot{ V}_1 -\dot{R}_0'\cdot u_1 +2 U_2\cdot \dot{R}_0 - \dot{B} \,\frac{d\teta}{(2\pi)^n}.\] 
It now remains to handle the choice of $\delta\lambda$ that makes equations average free.
Consider the vector field $\bar\lambda(\teta,r) = (\bar\beta,\bar b + \bar B\cdot r)$, which consequently lays in $\Lambda$, and the map $$ F_g: \Lambda\to \Lambda,\quad \delta\lambda\mapsto - \bar\lambda. $$
When $g=\id$, $F'_{\id} = -\id$. Provided that $g$ stays sufficiently close to the identity, say $\eps_0$-close to the identity in the $\abs{\,\cdot\,}_{s_0}$-norm ($s_0<s<s+\sigma$), $F'$ will be bounded away from $0$.  Note in particular that $-\bar\lambda$ is affine in $\delta\lambda$, the system to solve being triangular of the form $\average{a(g,\dot{v}) + A(g)\cdot\delta\lambda}=0$, with diagonal close to $1$ if the smalleness condition above is assumed, thus there exists a unique $\delta\lambda$ such that $F_g(\delta\lambda)=0$, and $$\abs{\delta\lambda} \leq \frac{C}{\gamma\sigma^{\tilde\tau}}\abs{\dot v}_{s+\sigma},$$
for some $\tilde\tau>1.$
We finally have  
\[\abs{\dot{g}}_{s-2\sigma} \leq \frac{C}{\gamma}\,\frac{1}{\sigma^{\tau '''}}\abs{\delta v}_{g,s+\sigma}. \]
Remembering the definition of $\dot g$ we have $\delta g = g'\cdot \dot g$, hence similar kind of estimates hold for $\delta g$:

\[\abs{\delta g}_{s-2\sigma} \leq \sigma^{-1}(1 + \abs{g - \id}_{s+\sigma}) \frac{C}{\gamma}\,\frac{1}{\sigma^{\tau'''}}\abs{\delta v}_{g,s+\sigma}. \] 
Finally, we see that $\delta u$ is actually well defined in $\overrightarrow{\U}_{s-2\sigma}$ and have \[\abs{\delta u}_{s-2\sigma}\leq \frac{C}{\gamma}\,\frac{1}{\sigma^{\tau'}}\abs{\delta v}_{g,s+\sigma}.\]
Up to defining $\sigma' = \sigma/3$ and $s'= s+\sigma$, the proposition is proved for all indexes $s'$ and $\sigma'$ with $s'<s' + \sigma'$.
\end{proof}
\begin{lemma}[Bounding $\phi ''$]
 The bilinear map
\[\phi ''(x) : (T_g\mathcal{G}_{s+\sigma}^{\sigma/n}\times\overrightarrow{\mathcal{U}}_{s+\sigma}\times\Lambda)^{\otimes 2}\to\mathcal{V}_{s}, \] where $x = (g,u,\lambda)$,
satisfies the following estimate \[\abs{\phi''(x)\cdot \delta x^{\otimes 2}}_{g,s}\leq \frac{C''}{\sigma^{\tau ''}} \abs{\delta x}^2_{s+\sigma},\]
$C''$ being a constant depending on $\abs{x}_s$.
\label{lemma derivata seconda}
\end{lemma}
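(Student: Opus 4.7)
The key observation is that $\phi(g,u,\lambda)=g_*u+\lambda$ is affine in $\lambda$ and linear in $u$ (for fixed $g$), so every second partial involving a $\delta\lambda$ vanishes, as does $\partial_u^2\phi$. Consequently
$$
\phi''(x)\cdot\delta x^{\otimes 2}=\partial_g^2\phi(x)\cdot\delta g^{\otimes 2}+2\,\partial_g\partial_u\phi(x)\cdot(\delta g,\delta u),
$$
and only these two terms need to be estimated.

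To bypass the $g$-dependent target norm, I shall pull back by $g$: by definition of $|\cdot|_{g,s}$, one has $|\phi''(x)\cdot\delta x^{\otimes 2}|_{g,s}=|g^*(\phi''(x)\cdot\delta x^{\otimes 2})|_s$. Setting $\dot g=g'^{-1}\cdot\delta g$, the computation already made in the proof of Proposition \ref{proposition infinitesimal} gives
$$
g^*\phi'(x)\cdot\delta x=[u,\dot g]+\delta u+g^*\delta\lambda.
$$
Differentiating this identity once more in $(g,u,\lambda)$ and exploiting linearity in $u$ and $\lambda$, the pulled-back second derivative $g^*\phi''(x)\cdot\delta x^{\otimes 2}$ becomes a finite, universal sum of Lie brackets whose building blocks are $u$, $\delta u$, $\delta g$, $\delta g'$ together with the scalar-valued fields $g'$, $g'^{-1}$ and composition with $g$ or $g^{-1}$.

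Each such summand will then be bounded as follows. Derivative factors like $\delta g'$, or first-order derivatives of $u$ and $\delta u$, are controlled by Cauchy's inequality $|\partial f|_{s+\sigma/2}\le(2/\sigma)|f|_{s+\sigma}$, costing a slice of width $\sigma/2$ and a factor $2/\sigma$. Products of analytic functions are handled by submultiplicativity $|fg|_\bullet\le|f|_\bullet|g|_\bullet$ of the weighted norm. The inverse Jacobian $g'^{-1}$ is expanded as a convergent Neumann series in $g'-\id$, which converges under the smallness hypothesis $g\in\mathcal{G}_{s+\sigma}^{\sigma/n}$ (the Cauchy estimate $|g'-\id|_{s+\sigma/2}\le(2/\sigma)|g-\id|_{s+\sigma}$ being then small). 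Compositions with $g^{\pm 1}$ are benign under the same hypothesis, since $g^{\pm 1}$ sends $\normal{T}^n_{s+\sigma/2}$ inside $\normal{T}^n_{s+\sigma}$ and the weighted norm is preserved up to a harmless multiplicative constant. Summing the $O(1)$ contributions yields the announced estimate, with $\tau''$ a small integer (at most $2$, one loss per Cauchy step) and $C''$ polynomial in $|(u-(\alpha,A\cdot r),g-\id,\lambda)|_{s+\sigma}$, hence depending only on $|x|_s$.

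The main obstacle is not analytic but purely combinatorial: one has to enumerate the finitely many summands produced by applying the Leibniz and chain rules twice to the push-forward formula $(g'\cdot u)\circ g^{-1}$ and then organise the successive shrinkings of the analyticity strip so that one Cauchy estimate on $\delta g'$, one Neumann expansion for $g'^{-1}$ and one composition with $g^{-1}$ all fit inside the available gap of width $\sigma$. The pull-back trick is precisely what makes this packaging possible, since it trades the $g$-dependent norm on the target for the standard weighted norm on the pulled-back space, with no extra loss.
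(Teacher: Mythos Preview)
Your plan is correct and follows essentially the same route as the paper: pull back by $g$, differentiate the identity $g^*\phi'(x)\cdot\delta x=[u,\dot g]+\delta u+g^*\delta\lambda$ once more, and bound the resulting Lie-bracket expression with Cauchy's inequality and the Lie-bracket estimate (Lemma~\ref{lemma lie brackets}). The paper simply carries the computation one step further and records the explicit three-term formula
\[
g^*\phi''(x)\cdot\delta x^{\otimes 2}=2[\delta u,\dot g]+[[u,\dot g],\dot g]+[u,\,g'^{-1}\cdot(\delta g'\cdot g'^{-1}\cdot\delta g)],
\]
after which the estimate is immediate from Lemma~\ref{lemma lie brackets} and Cauchy.

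One small caution: your assertion that ``compositions with $g^{\pm1}$ \dots\ preserve the weighted norm up to a harmless multiplicative constant'' is not obviously true for this Fourier-$\ell^1$ norm (it is for the sup norm). Fortunately the explicit formula above shows that after pulling back \emph{no compositions remain}---every factor lives on the standard torus---so this point is moot. Writing out the formula, as the paper does, is therefore not mere bookkeeping: it is what makes clear that only the Lie-bracket lemma and Cauchy are needed, and that your composition estimate is never actually invoked.
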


\begin{proof}
For simplicity call $x=(g,u,\lambda)$ and $\delta x = (\delta g,\delta u,\delta\lambda)$. Recall the expression of $\phi'(x)\cdot\delta x= \sq{\push{g}u,\delta g \circ g^{-1}} + \push{g}\delta u + \delta \lambda $. Differentiating again with respect to $x$ yelds
\[\sq{\sq{\push{g}u,\delta g\circ g^{-1}} + \push{g}\delta u, \delta g\circ g^{-1}}  - \sq{\push{g}u, \delta g'\circ g^{-1}\cdot\delta g^{-1}} + \sq{\push{g}\delta u,\delta g\circ g^{-1}}. \]
Since $\delta g^{-1} = - (g'^{-1}\cdot \delta g)\circ g^{-1}$, 
\[\pull{g}\phi''(x)\cdot \delta x^{\otimes 2} = 2\sq{\delta u, \dot g} + \sq{\sq{u,\dot g}, \dot g} + \sq{u,\pull{g} (\delta g'\cdot \inderiv{g}\cdot \delta g)\circ g^{-1}}, \] where the last term simplifies in \[\sq{u,\inderiv{g}\cdot (\delta g'\cdot \inderiv{g}\cdot \delta g)}.\] the wanted bound follows from repeatedly applying Cauchy's inequality, triangular inequality and Lemma \ref{lemma lie brackets}.
\end{proof}

\subsection{Proof of Moser's theorem} Proposition \ref{proposition infinitesimal} and Lemma \ref{lemma derivata seconda} guarantee to apply Theorem \ref{teorema inversa}, which  provides the existence of $(g,u,\lambda)$ such that $g_\star u + \lambda = v$. Uniqueness and smooth differentiation follow from propositions \ref{lipschitz}, \ref{smoothness} and \ref{Whitney}, once $\abs{v - u^0}_{s+\sigma}$ satisfies the required bound. The only brick it remains to add is the log-convexity of the weighted norm: let $x\in E_s$, to prove that $s\mapsto \log{\abs{x}_s}$ is convex one can easily show that \[\abs{x}_s\leq \abs{x}^{1-\mu}_{s_1}\abs{x}^{\mu}_{s_0},\quad \mu\in[0,1],\ \forall s=(1-\mu)s_1 + s_0\mu\] by H\"older inequality with conjugates $(1-\mu)$ and $\mu$, with the counting measure on $\Z^n$, observing that $\abs{x}_s$ coincides with the $\ell^{1}$-norm of the sequence $(\abs{x_k}e^{\abs{k}s})$. Theorem \ref{abstract Moser} follows, hence theorem \ref{Moser theorem}. 
\section{Hamiltonian systems. Herman's twisted conjugacy theorem}

\label{The Hamiltonian case: Herman's theorem}

The Hamiltonian analogue of Moser's theorem was presented by Michael Herman in a colloquium held in Lyon in 1990. It is also an extension of the normal form theorem of Arnold for vector fields on $\T^n$ (see \cite{Arnold:1961}). \\ In what follows we rely on the formalism developed by Féjoz in his remarkable papers \cite{Fejoz:2012, Fejoz:2004, Fejoz:2015}. This frame will be also used in section \ref{Hamiltonian-dissipative section}, for generalizing Herman's result. Vector fields will be defined on $\T^n\times\R^n$. As always the standard identification $\R^{n\ast}\equiv \R^n$ will be used.
\subsection{Spaces of vector fields}
Let $\mathcal{H}$ be the space of germs of real analytic Hamiltonians defined on some neighborhood of $\normal{T}^n_0 = \T^n\times\set{0}\subset\T^n\times\R^n$, and $\mathcal{V}^{\text{Ham}}$ the corresponding set of germs along $\normal{T}^n_0$ of real analytic Hamiltonian vector fields.\\ In this and the following sections we will only need to consider the standard Diophantine condition \eqref{Dio 1}, for some $\gamma,\tau>0$.\\ Fixing $\alpha\in \mathcal{D}_{\gamma,\tau}	\subset \R^n$, consider the following affine subspace of $\mathcal{H}$, $$\mathcal{K}^{\alpha}=\set{K\in\mathcal{H}: K(\teta,r)= c + \alpha\cdot r + O(r^2)}.$$ 
$\mathcal{K}^{\alpha}$ is the set of Hamiltonians $K$ for which $\normal{T}^n_0$ is invariant by the flow $u^K$ and $\alpha$-quasi-periodic:
\begin{equation}
u^{\text{K}}=\eqsys{\dot\teta = \frac{\partial K}{\partial r}(\teta,r)= \alpha + O(r)\\
\dot{r} = -\frac{\partial K}{\partial \teta}(\teta,r)= O(r^2).} 
\end{equation}
We define
 $$\mathcal{U}^{\text{Ham}}(\alpha,0)= \set{u^{\text{K}}\in\mathcal{V}^{\text{Ham}} : K\in \mathcal{K}^\alpha}$$ 
and introduce the set of counter terms $$\Lambda^{\operatorname{Ham}}=\set{\lambda\in\V^{\operatorname{Ham}}: \lambda(\teta,r)= (\beta,0)} \equiv \R^n.$$ We define the complex extension of width $s$ of $\T^n\times\R^n$  as in section \ref{complex extensions}, and note $\mathcal{H}_s = \mathcal{A}(\T^n_s)$ the space of Hamiltonians defined on this extension.  $\mathcal{K}^\alpha_s$ is the affine subspace consisting of those $K\in\mathcal{H}_s$ of the form $K(\teta,r)= c + \alpha\cdot r + O(r^2)$.

\subsection{Spaces of conjugacies}

Let $\mathcal{D}^\sigma_s$ be the space real holomorphic invertible maps  $\varphi = \id + v : \T^n_s\to\T^n_\C$, fixing the origin with \[\abs{v}_s = \max_{1\leq j\leq n}(\abs{v_j}_s)<\sigma.\]
 We consider the contragredient action of $\mathcal{D}_s$ on $\normal{T}^n_s$, with values in $\normal{T}^n_\C$: \[\varphi(\teta,r) = (\varphi(\teta), \,^t\varphi'^{-1}(\teta)\cdot r).\] This is intended to linearize the dynamics on the tori.\\ Let $\mathcal{B}^\sigma_s$ be the space of exact $1$-forms $\rho(\teta)= dS(\teta)$ on $\T^n_s$ ($S$ being a map $\T^n_s\to \C$, vanishing at the origin) such that $$\abs{\rho}_s = \max_{1\leq j\leq n} (\abs{\rho_j}_s)<\sigma;$$ we hence consider the space $\G^{{\Ham},\sigma}_{s}= \mathcal{D}^\sigma_s\times \mathcal{B}^\sigma_s$ of those Hamiltonian transformations $g=(\varphi,\rho)$ acting this way $$g(\teta,r)= (\varphi(\teta),\,^t\varphi'^{-1}(\teta)\cdot (r + \rho(\theta))),$$ that is identified, locally in the neighborhood of the identity, to an open set of the affine space passing through the identity and directed by $\set{(\varphi - \id), S}$. 
The form $\rho= dS$ being exact, it doesn't change the cohomology class of the torus.\footnote{In this work we indicated derivations sometimes by "$\,'\,$", "$\operatorname{d}$" or "$\operatorname{D}$" to avoid heavy notations.}

The tangent space at the identity of $\G^{\Ham}_s$, $T_{\id}\G^{\Ham}_s= \chi_s\times \mathcal{B}_s$ is endowed with the norm $$\abs{\dot g}_s = \max (\abs{\dot\varphi}_s,\abs{\dot\rho}_s).$$

\comment{\subsection{Spaces of conjugacies} 
Let $\mathcal{D}_s$ be the space of maps $$\varphi = \id + v \in\A(\T^n_s,\T^n_\C),$$ fixing the origin. \\ We consider the contragredient action of $\mathcal{D}_s$ on $\normal{T}^n_s$, with values in $\normal{T}^n_\C$: \[\varphi(\teta,r) = (\varphi(\teta), \,^t\varphi'^{-1}(\teta)\cdot r).\] This is intended to linearize the dynamics on the tori.\\
Let $\mathcal{B}_s$ be the space of exact complex valued $1$-forms $\rho$ on $\T^n_s$.\\
We define $\G^{\Ham}_s=\mathcal{D}_s\times\mathcal{B}_s$ and identify it with the space of exact symplectomorphisms \footnote{For "exact symplectomorphism" we mean a symplectic $g$ such that $\pull{g}\lambda - \lambda$ is exact, $\lambda(\teta,r)=\sum_{j=0}^n r_jd\teta_j$ being the fundamental $1$-form of Liouville on $\T^n\times\R^n$} $$\G^{\operatorname{Ham}}_s=\set{g\in\G_s: g(\teta,r)=(\varphi(\teta),\,^t\varphi'^{-1}(\teta)\cdot(r + \rho(\teta))}.$$ The form $\rho= dS$ being exact ($S:\T^n_s\to \C$), it doesn't change the cohomology class of the torus.\footnote{In this work we indicated derivations sometimes by "$\,'\,$", "$\operatorname{d}$" or "$\operatorname{D}$" to avoid heavy notations.}
}

\begin{thm}[Herman] 
\label{Herman Hamiltonian}
Let $\alpha\in \mathcal{D}_{\gamma,\tau}$ and $K^0 \in \mathcal{K}_{s+\sigma}^\alpha$. If $H\in\mathcal{H}_{s+\sigma}$ is close enough to $K^0$, there exists a unique $(K, g, \beta)\in \mathcal{K}_s^\alpha \times \G^{\operatorname{\Ham}}_s \times \Lambda^{\operatorname{\Ham}}$ close to $(K^0,\id,0)$ such that $$ H = {K}\circ g + \beta\cdot r. $$
\end{thm}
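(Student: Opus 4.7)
The plan is to mimic the architecture of Theorem~\ref{Moser theorem}. Define the Hamiltonian normal-form operator
\begin{equation*}
\phi^{\Ham}:\ \G^{\Ham,\sigma/n}_{s+\sigma}\times\mathcal{K}^\alpha_{s+\sigma}\times\Lambda^{\Ham}\ \to\ \mathcal{H}_s,\qquad (g,K,\beta)\ \mapsto\ K\circ g+\beta\cdot r,
\end{equation*}
and show it is a local diffeomorphism via the abstract inverse function theorem~\ref{teorema inversa} applied at the base point $(\id,K^0,0)$. As in the proof of Moser's theorem, this reduces to two tasks: (i) exhibiting a bounded right inverse of $\phi'^{\Ham}$ in a whole neighborhood of the base point, with the standard Diophantine loss of analyticity, and (ii) bounding the second derivative $\phi''^{\Ham}$ in the style of Lemma~\ref{lemma derivata seconda}.

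To invert the linearization, fix $(g,K,\beta)$ close to $(\id,K^0,0)$ and write an infinitesimal triplet as $((\dot\varphi,\dot S),\dot K,\dot\beta)$ with $\dot K=\dot c+O(r^2)\in\overrightarrow{\mathcal{K}}^\alpha$ and the normalizations $\dot\varphi(0)=0$, $\dot S(0)=0$. Differentiating $\phi^{\Ham}$ and pulling back by $g$, in the spirit of the passage from \eqref{explicit linearized} to \eqref{pullback linearized}, reduces the equation $\phi'^{\Ham}(g,K,\beta)\cdot(\dot g,\dot K,\dot\beta)=\dot H$ to a perturbation of its value at $(\id,K^0,0)$, which upon expansion to orders $r^0$ and $r^1$ reads
\begin{equation*}
L_\alpha\dot S+\dot c=\dot H_0(\theta),\qquad -L_\alpha\dot\varphi+\dot\beta=\dot H_1(\theta),
\end{equation*}
the contributions of order $\geq 2$ in $r$ being absorbed into $\dot K_{\geq 2}$. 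Both equations fall under the scope of Lemma~\ref{tangent lemma}, and the two finite-dimensional obstructions are killed by setting $\dot c=\langle\dot H_0\rangle$ and $\dot\beta=\langle\dot H_1\rangle$. The Hamiltonian structure is decisive here: since $\partial_rK^0=\alpha+O(r)$ (trivial linear matrix $A=0$), what would have been equations of types~\ref{normal lemma} and~\ref{matrix lemma} collapse into two scalar straightening equations of type~\ref{tangent lemma}, which is precisely why the counter-term space $\Lambda^{\Ham}$ reduces to translations $\beta\in\R^n$.

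The bound on $\phi''^{\Ham}$ then follows from direct computation via Cauchy's estimate and product/composition inequalities on the weighted norm, in the same spirit as Lemma~\ref{lemma derivata seconda}. With both ingredients in place, Theorem~\ref{teorema inversa} together with the regularity results~\ref{smoothness}--\ref{lipschitz} will deliver a unique $(g,K,\beta)$ near $(\id,K^0,0)$ solving $\phi^{\Ham}(g,K,\beta)=H$ for every $H$ in a suitable neighborhood of $K^0$ in $\mathcal{H}_{s+\sigma}$. The main obstacle, I expect, is not the KAM machinery itself but the bookkeeping of the exact-symplectic parametrization $g=(\varphi,S)$: one must verify that the tangent vectors used at each step genuinely lie in $T_{\id}\G^{\Ham}_s$, so that the Newton-like iteration stays within the class of exact symplectic transformations and the counter-term $\beta\cdot r$ captures precisely the full finite-dimensional obstruction -- no more, no less.
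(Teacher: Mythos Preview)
Your strategy is correct and essentially coincides with what the paper does: the paper itself does not give a standalone proof of Herman's theorem but refers to F\'ejoz \cite{Fejoz:2010,Fejoz:2004} and, in effect, recovers it as the $\eta=0$ specialization of its own dissipative generalization (Theorem~\ref{theorem Herman} and Proposition~\ref{proposition infinitesimal ham}), whose architecture---inverse function theorem~\ref{teorema inversa}, pull-back of the linearized equation, triangular cohomological system solved via Lemma~\ref{tangent lemma}, bound on $\phi''$---is exactly what you outline.

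One small correction in your linearized system: even at the base point $(\id,K^0,0)$, the order-$r^1$ equation is not decoupled. Writing $K^0=c+\alpha\cdot r+\tfrac12 Q(\theta)\cdot r^2+O(r^3)$, the term $\partial_rK^0\cdot d\dot S$ contributes $Q\cdot d\dot S$ at order $r^1$, so the correct pair is
\[
L_\alpha\dot S+\dot c=\dot H_0,\qquad -L_\alpha\dot\varphi+Q\cdot d\dot S+\dot\beta=\dot H_1,
\]
which is precisely the coupling $u_1\cdot d\dot S$ appearing in the paper's Proposition~\ref{proposition infinitesimal ham} (there written for vector fields). The system is therefore triangular rather than decoupled: one solves for $d\dot S$ first, then treats $Q\cdot d\dot S$ as a known datum in the equation for $\dot\varphi$. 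This does not alter the estimates, the choice of $\dot c$ and $\dot\beta$ as averages, or any of the remaining steps you describe.
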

Here too, the presence of $\beta\cdot r$ breaks the dynamical conjugacy between $H$ and $K$: the orbits of $K's\in\mathcal{K}^\alpha$ under the action of diffeomorphisms of $\G^{\Ham}$, form a subspace of codimension $n$. \\
For a proof of this result, known also as "twisted conjugacy theorem", see \cite{Fejoz:2010}, and \cite{Fejoz:2004} for an analogue in the context of Hamiltonians with both tangent and normal frequencies.\\
Phrased in terms of vector fields, the theorem becomes
\begin{thm}[Herman] If $v^{\h}$ is close enough to $u^{K^0}\in\U^{\Ham}_s(\alpha,0)$, there exists a unique $(g,u^K,\beta)\in\G^{\Ham}_s\times\U_s^{\Ham}(\alpha,0)\times\Lambda^{\Ham},$ close to $(\id,u^{K_0},0)$ such that $$\push{g}u^{K} + \beta\,{\partial_\theta} = v^{\h}.$$
\end{thm}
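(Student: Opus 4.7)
The strategy is to reduce the statement to the Hamiltonian-function version (Theorem~\ref{Herman Hamiltonian}) via the symplectic-gradient correspondence. Since $v^{\h}\in\Vh$, write $v^{\h}=X_H$ for some germ of real analytic Hamiltonian $H$, defined uniquely modulo an additive constant. Closeness of $v^{\h}$ to $u^{K^0}=X_{K^0}$ in the weighted analytic norm is equivalent, up to an additive constant, to closeness of $H$ to $K^0$ in the weighted norm on Hamiltonians. Applying Theorem~\ref{Herman Hamiltonian} to $H$ produces a unique triple $(K,\tilde g,\beta)\in\mathcal{K}^{\alpha}_s\times\G^{\Ham}_s\times\Lambda^{\Ham}$ close to $(K^0,\id,0)$ with $H=K\circ\tilde g+\beta\cdot r$.

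The key algebraic point is that an exact symplectomorphism $\tilde g$ intertwines composition of Hamiltonians with pull-back of Hamiltonian vector fields, $X_{K\circ\tilde g}=\pull{\tilde g}X_K$. Combined with the elementary identity $X_{\beta\cdot r}=\beta\,\partial_{\teta}$, taking the Hamiltonian vector field of both sides of $H=K\circ\tilde g+\beta\cdot r$ yields $v^{\h}=\pull{\tilde g}u^K+\beta\,\partial_{\teta}$. Setting $g:=\tilde g^{-1}$ gives $\pull{\tilde g}u^K=\push{g}u^K$, and the desired normal form $\push{g}u^K+\beta\,\partial_{\teta}=v^{\h}$ follows. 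Uniqueness transfers directly from the Hamiltonian statement: additive constants in $K$ do not affect $u^K$, and inversion $\tilde g\mapsto\tilde g^{-1}$ is a local diffeomorphism of $\G^{\Ham}$ near the identity.

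The only technical step requiring real care is the control of the inversion $\tilde g\mapsto\tilde g^{-1}$ in the scale of Banach spaces $\G^{\Ham}_s$ and the standard width loss in the passage between Hamiltonians and Hamiltonian vector fields; both are of the same nature as estimates invoked in the proof of Theorem~\ref{abstract Moser} and introduce no new difficulty. As an alternative, a direct self-contained proof would apply the abstract inverse function theorem to the operator $\phi^{\Ham}:(g,u^K,\beta)\mapsto\push{g}u^K+\beta\,\partial_{\teta}$: linearizing and pulling back by $g$ reduces, thanks to the Hamiltonian structure, to a pair of scalar cohomological equations of the type of Lemma~\ref{tangent lemma}, one for the generating function of the infinitesimal exact symplectomorphism and one for the tangential diffeomorphism, with zero-mean obstructions absorbed respectively by an additive constant in $\delta K$ and by the counter term $\dot\beta$. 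The conceptual upshot, either way, is that the Hamiltonian constraint collapses the Moser counter-term space from $(\beta,b+B\cdot r)$ to the purely tangential translation $\beta\in\R^n$.
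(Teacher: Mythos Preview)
Your proposal is correct and matches the paper's own treatment: the paper presents this vector-field statement simply as a rephrasing of the Hamiltonian-function version (Theorem~\ref{Herman Hamiltonian}), and your reduction via the symplectic-gradient correspondence $X_{K\circ\tilde g}=\pull{\tilde g}X_K$, $X_{\beta\cdot r}=\beta\,\partial_\theta$, followed by the relabeling $g=\tilde g^{-1}$, is exactly the intended passage. Your alternative direct route through the abstract inverse function theorem is likewise the $\eta=0$ specialization of the argument the paper gives for Theorem~\ref{theorem Herman} (cf.\ Remark~\ref{remark uniform} and the remark following it).
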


\section{Hamiltonian-dissipative systems. Generalization of Herman's theorem and translated tori à la R\"ussmann}
\label{Hamiltonian-dissipative section}

\subsection{A generalization of Herman's theorem}
\label{dissipative Herman}
Here we generalize to a particular class of dissipative vector-fields the theorem of Herman.
\subsubsection{Spaces of vector fields}
Let $\mathcal{H}_s = \A(\normal{T}^n_s)$ and $\V^{\Ham}_s$ the  space of Hamiltonian vector fields corresponding to Hamiltonians $H's\in\mathcal{H}_s$. Let now $\eta\in\R$ be a fixed constant and let extend $\V^{\Ham}_s$ as $\pa{\V^{\Ham}\oplus(-\eta r\partial_r)}_s$. The corresponding affine subspace becomes $$\U^{\Ham}_s(\alpha,-\eta) = \set{u\in\pa{\V^{\Ham}\oplus(-\eta r\partial_r)}_s: u(\teta,r)= (\alpha + O(r), -\eta r + O(r^2))}.\footnote{We recall that the notation $r\partial_r$ is a shortcut for $\sum_j^n r_j\partial_{r_j}$.}$$ When $\eta>0$ (resp. $\eta<0$) the invariant quasi-periodic torus $\text{T}^n_0$ of $u$ is $\eta$-normally attractive (resp. repulsive).  \\ The class $\V^{\Ham}\oplus(-\eta r\partial_r)$ is mathematically peculiar: it is invariant under the Hamiltonian transformations in $\G^{\Ham}$. Physically, the described system undergoes a constant linear friction (resp. amplification) which is the same in every direction.

According to theorem \ref{theorem well def} and corollary \ref{cor well def}, the operators 
\begin{equation}
\phi : \G^{{\Ham},\sigma^2/2n}_{s+\sigma}\times\Uh_{s+\sigma}({\alpha,-\eta})\times\Lambda^{\Ham}\to \Vh_s,\quad (g,u,\beta)\mapsto \push{g}u + \beta\partial_\teta,
\label{operator Herman}
\end{equation}
 commuting with inclusions, are well defined.
\begin{thm}["Dissipative Herman"]
\label{theorem Herman}
If $v\in\pa{\Vh\oplus(-\eta r\partial_{r})}_{s+\sigma}$ is sufficiently close to $u^0\in\Uh_{s+\sigma}(\alpha,-\eta)$, for any $\eta\in [-\eta_0,\eta_0], \eta_0\in\R^+$,  there exists a unique $(g,u,\beta)\in \G^{\operatorname{Ham}}_s\times\Uh_s(\alpha,-\eta)\times\Lambda^{\Ham}$, close to $(\id,u^0,0),$ such that \[\push{g}u + \beta\partial_{\teta} = v.\]
\end{thm}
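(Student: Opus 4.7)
The plan is to deduce the statement from the abstract inverse function theorem \ref{teorema inversa}, mirroring the proof of Theorem \ref{abstract Moser}. Well-definedness of the operator $\phi$ in \eqref{operator Herman} is already secured by Theorem \ref{theorem well def} and Corollary \ref{cor well def}, so the task reduces to (i) inverting the linearization $\phi'(g,u,\beta)$ at any $(g,u,\beta)$ close to $(\id,u^0,0)$ with a tame estimate of the form \eqref{linear bounds} and (ii) a quadratic bound on $\phi''$ analogous to Lemma \ref{lemma derivata seconda}. Point (ii) is a straightforward transcription of the computation in Lemma \ref{lemma derivata seconda}, the only change being that $\dot g$ now ranges in the Hamiltonian tangent space $T_{\id}\G^{\Ham}_s$; the real work lies in the infinitesimal inversion.

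To carry out (i), I would fix a datum $\delta v\in\pa{\Vh\oplus(-\eta r\partial_r)}_{g,s+\sigma}$ and pull back the linearized equation
\[\sq{\push{g}u,\delta g\circ g^{-1}}+\push{g}\delta u+\delta\beta\,\partial_\teta = \delta v\]
by $g$. Setting $\dot g = g'^{-1}\cdot\delta g = (\dot\varphi,\,^t\dot\varphi'^{-1}\cdot r + d\dot S)$, $\dot\lambda = \pull{g}(\delta\beta\,\partial_\teta)$ and $\dot v = \pull{g}\delta v$, and expanding in jets along $\T^n\times\set{0}$, one reaches, as in Proposition \ref{proposition infinitesimal}, a triangular system of three cohomological equations. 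The decisive simplification is that $A=-\eta\,\id$ gives $[A,\cdot]=0$, so the three equations take the form
\begin{align*}
L_\alpha\dot\varphi - u_1\cdot\dot R_0 &= \dot v_0 - \dot\beta,\\
(L_\alpha-\eta)\dot R_0 &= \dot V_0,\\
L_\alpha\dot R_1 &= \dot V_1-\dot R_0'\cdot u_1 + 2U_2\cdot\dot R_0,
\end{align*}
and are solved in triangular order via Lemmata \ref{tangent lemma}, \ref{normal lemma} (applied with scalar eigenvalue $-\eta$) and \ref{matrix lemma} (with vanishing commutator). The average obstruction in the first equation is uniquely absorbed by the choice of $\dot\beta\in\R^n$, exactly as in the proof of Proposition \ref{proposition infinitesimal}.

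The main obstacle is to obtain these inverses uniformly in $\eta\in[-\eta_0,\eta_0]$ \emph{without} introducing any normal counter-term, particularly at $\eta=0$ where $L_\alpha-\eta$ degenerates to $L_\alpha$ and develops a zero mode. The key point is that the class $\Vh\oplus(-\eta r\partial_r)$ is invariant under the exact-symplectic action of $\G^{\Ham}$, so $\dot v$ inherits the Hamiltonian-dissipative structure: modulo the diagonal contribution from the dissipative term, the normal Taylor coefficients $\dot V_0$ and $\dot V_1$ are pure $\teta$-derivatives of a Hamiltonian, which forces $\int_{\T^n}\dot V_0\,d\teta=0$ and $\int_{\T^n}\dot V_1\,d\teta = 0$ componentwise. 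These are precisely the zero-average conditions that Lemmata \ref{normal lemma} and \ref{matrix lemma} require to invert \emph{without} adding normal counter-terms. Moreover the lower bound $\abs{ik\cdot\alpha - \eta}\geq\abs{k\cdot\alpha}\geq\gamma/\abs{k}^\tau$ for $k\neq 0$ is uniform in $\eta$, which makes the small-divisor estimates uniform in $\eta$; this is the observation alluded to as Remark \ref{remark uniform} in the introduction. Combining this tame estimate with the quadratic bound on $\phi''$, Theorem \ref{teorema inversa} yields the desired local inverse $\psi$ of $\phi$, and uniqueness together with regularity follow from Propositions \ref{lipschitz} and \ref{smoothness}, concluding the proof.
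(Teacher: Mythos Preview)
Your overall strategy is the same as the paper's, and the key insight about uniformity in $\eta$ (Remark \ref{remark uniform}) is correctly identified. However, there is a genuine gap in the infinitesimal inversion.

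You write the pulled-back linearized system with unknowns $\dot R_0,\dot R_1$ as if they were independent (as in the general Moser setting of Proposition \ref{proposition infinitesimal}) and propose to solve the third equation for $\dot R_1$ via Lemma \ref{matrix lemma}. But in $\G^{\Ham}$ the tangent vector is constrained: $\dot g=(\dot\varphi,\,-\,^t\dot\varphi'\cdot r+d\dot S)$, so $\dot R_0=d\dot S$ and $\dot R_1=-\,^t\dot\varphi'$. There are only \emph{two} free unknowns for the conjugacy, not three; once $\dot\varphi$ is determined by the first equation, $\dot R_1$ is fixed, and the third equation is an \emph{overdetermination} rather than an equation to be solved. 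The paper closes this gap by observing (Proposition \ref{proposition infinitesimal ham}) that, thanks to the Hamiltonian structure inherited from Lemma \ref{lemma Hamiltonian2}, the third equation is precisely the transpose of the $\teta$-derivative of the first with opposite sign, hence automatically satisfied once the first two are. Your zero-average remark on $\dot V_1$ points in the right direction but does not by itself establish this consistency.

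A smaller point: the pullback $\dot\lambda=\pull{g}(\delta\beta\,\partial_\teta)$ has a nonzero normal component $\dot\Lambda_0$ (see the paper's formula $\dot\Lambda_0=-\partial_\teta(\,^t\varphi'^{-1}\!\cdot\rho)\cdot\delta\beta$), which you omit from the second equation; the paper checks that $\dot\Lambda_0$ has zero average so the $d\dot S$-equation remains solvable without a normal counter-term. Also your form of $\dot g$ and the sign in $(L_\alpha-\eta)$ should read $(L_\alpha+\eta)$ given $A=-\eta\,\id$ in equation \eqref{R_0}, but these are cosmetic compared to the structural issue above.
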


The key point relies on the following two technical lemmata.

\begin{lemma}
\label{lemma Hamiltonian1}
If $g\in\G^{\Ham}$ and $v\in\Vh\oplus(-\eta r\partial_r)$, the vector field $\push{g}v$ is given by 
\begin{equation}
\push{g}v = \eqsys{\dot{\Theta} = \frac{\partial \hat{H}}{\partial R}\\
\dot{R} = -\frac{\partial \hat{H}}{\partial\Theta} - \eta R,}
\label{tesi lemma Hamiltonian1}
\end{equation}
where $$\hat{H}(\Theta,R) = H\circ g^{-1}(\Theta,R) - \eta \pa{S\circ\varphi^{-1}(\Theta)}.$$
\end{lemma}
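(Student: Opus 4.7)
The plan is to decompose $v$ into its Hamiltonian part plus the linear dissipation, push each term forward separately, and recognize the discrepancy as an extra Hamiltonian contribution that gets absorbed into a modified Hamiltonian $\hat H$.

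First I would write $v = u^{H} + (-\eta\, r\,\partial_r)$ with $H\in\mathcal{H}$. Since $g\in\G^{\Ham}$ is an exact symplectomorphism, the naturality of the Hamiltonian formalism gives at once $\push{g}u^{H} = u^{H\circ g^{-1}}$. The whole issue is therefore to compute $\push{g}(-\eta\, r\,\partial_r)$ and to show that it equals the sum of $-\eta R\,\partial_R$ and the Hamiltonian vector field of $-\eta\, S\circ\varphi^{-1}$.

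Next, using the explicit form $g(\teta,r)=(\varphi(\teta),\,{}^t\varphi'^{-1}(\teta)\cdot(r+\operatorname{d}S(\teta)))$, I compute the Jacobian of $g$ applied to $(0,-\eta r)$: the angular component is $0$, and the action component is $-\eta\,{}^t\varphi'^{-1}(\teta)\cdot r$. Inverting the relation $R={}^t\varphi'^{-1}(\teta)(r+\operatorname{d}S(\teta))$ to recover $r={}^t\varphi'(\teta)R-\operatorname{d}S(\teta)$, I rewrite the action component in $(\Theta,R)$-coordinates as
\begin{equation*}
-\eta R \;+\; \eta\,{}^t\varphi'^{-1}(\teta)\cdot \operatorname{d}S(\teta),\qquad \teta=\varphi^{-1}(\Theta).
\end{equation*}
The chain rule identity $\operatorname{d}(S\circ\varphi^{-1})(\Theta)={}^t(\varphi^{-1})'(\Theta)\cdot\operatorname{d}S(\varphi^{-1}(\Theta))={}^t\varphi'(\teta)^{-1}\cdot\operatorname{d}S(\teta)$ then identifies this correction as $\eta\,\operatorname{d}(S\circ\varphi^{-1})(\Theta)$, which is nothing else than $-\partial_\Theta\bigl(-\eta\,S\circ\varphi^{-1}\bigr)$.

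Finally, I recognize $(0,\eta\,\operatorname{d}(S\circ\varphi^{-1})(\Theta))$ as the Hamiltonian vector field of the $\Theta$-only function $F(\Theta)=-\eta\, S\circ\varphi^{-1}(\Theta)$, and combine everything: $\push{g}v = u^{H\circ g^{-1}} + u^{-\eta\, S\circ\varphi^{-1}} - \eta R\,\partial_R = u^{\hat H} - \eta R\,\partial_R$ with $\hat H = H\circ g^{-1} - \eta\,S\circ\varphi^{-1}$, which is precisely \eqref{tesi lemma Hamiltonian1}. The only subtle step is the second one — carefully transforming $r$ back into $(\Theta,R)$ variables and performing the chain-rule identification that turns the transpose-Jacobian factor into the gradient of $S\circ\varphi^{-1}$; everything else is either symplectic naturality or linear bookkeeping.
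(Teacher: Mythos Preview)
Your proof is correct and takes a genuinely different, more structural route than the paper. The paper computes $\dot\Theta$ and $\dot R$ directly by time-differentiating the relations $\Theta=\varphi(\teta)$ and $R={}^t\varphi'^{-1}(\teta)\cdot(r+dS(\teta))$ along the full flow of $v$, expanding $\dot R$ into four pieces $A,B,C,D$ and then regrouping everything into $-\partial_\Theta(H\circ g^{-1})-\eta R+\eta\,{}^t\varphi'^{-1}\circ\varphi^{-1}\cdot dS\circ\varphi^{-1}$. You instead split $v=u^H+(-\eta r\,\partial_r)$, invoke symplectic naturality to dispatch $\push{g}u^H=u^{H\circ g^{-1}}$ in one line, and reduce the computation to pushing forward the single simple vector field $-\eta r\,\partial_r$. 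Your approach isolates exactly where the correction $-\eta\,S\circ\varphi^{-1}$ comes from (the affine shift $+dS$ in the action coordinate, which prevents $-\eta r\,\partial_r$ from being $g$-equivariant), and makes transparent why a general diagonal matrix in place of $\eta\,\id$ would break the argument: the leftover term would no longer be a gradient. The paper's brute-force computation has the virtue of being self-contained (it re-derives the symplectic covariance of the Hamiltonian part along the way), but your decomposition is cleaner and better explains the structure.
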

The fact that $\eta\in\R$ is fundamental to maintain the Hamiltonian structure, which would be broken even if $\eta$ was a diagonal matrix. Geometrically, the action of $g$ on $H$ is "twisted" by the dissipation.
\begin{proof}
$g(\teta,r)= (\Theta,R)$, that is, 
\begin{equation*}
\eqsys{\Theta = \varphi(\teta)\\
R =\, ^t\varphi'^{-1}(\teta)\cdot(r + dS(\teta)).}
\end{equation*}
We have 
\begin{itemize}[leftmargin=*]
\item in the tangent direction 
\[\dot\Theta = \varphi'(\teta)\cdot \dot\teta = \frac{\partial (H\circ g^{-1})}{\partial R}.\]
\item The derivation of $\dot R$ requires a little more attention: 
\begin{align*}
\dot{R} &= \underbrace{(\,^t \varphi'^{-1}(\teta))' \cdot r \cdot \dot\teta}_A + \underbrace{\,^t\varphi'^{-1}(\teta)\cdot \dot r}_B + \underbrace{\,^t \inderiv{\varphi}(\teta)\cdot D^2 S(\teta)\cdot  \dot \teta}_C \\  
& + \underbrace{(\,^t \varphi'^{-1}(\teta))'\cdot dS(\teta)\cdot\dot{\teta}}_D
\end{align*}
where, expanding and composing with $g^{-1}$
\begin{align*}
A &= \pa{-\,^t\varphi'^{-1}\cdot\,^t\varphi''\cdot\,^t\varphi'^{-1}}\circ\varphi^{-1}(\Theta)\cdot(\,^t\varphi'\circ\varphi^{-1}(\Theta)\cdot R - dS\circ\varphi^{-1}(\Theta))\cdot \ph{r}\\
B & = -\,^t\inderiv{\varphi}\circ\varphi^{-1}(\Theta)\cdot \ph{\teta} - \eta R + \eta\,^t\inderiv{\varphi}\circ\varphi^{-1}(\Theta)\cdot dS\circ\varphi^{-1}(\Theta) \\
C &= \,^t\inderiv{\varphi}(\teta)\cdot D^2 S(\teta)\cdot \ph{r} \\
 &= \,^t\inderiv{\varphi}\circ\varphi^{-1}(\Theta)\cdot D^2S\circ\varphi^{-1}(\Theta)\cdot\ph{r}\\
D &= -\pa{\,^t\varphi'^{-1}\cdot\,^t\varphi''\cdot\,^t\varphi'^{-1}}\circ\varphi^{-1}(\Theta)\cdot dS\circ\varphi^{-1}(\Theta)\cdot \ph{r}\\
\end{align*}
\end{itemize}
Remark that if $$H\circ g^{-1}(\Theta,R) = H\pa{\varphi^{-1}(\Theta),\,^t\varphi'\circ\varphi^{-1}(\Theta)\cdot R - dS\circ\varphi^{-1}(\Theta)},$$
we have
\begin{align*}
\ph{\Theta} &= \ph{\teta}\cdot \inderiv{\varphi}\circ\varphi^{-1}(\Theta)\\
            & + \ph{r}\cdot \sq{\,^t\varphi''\circ\varphi^{-1}(\Theta)\cdot\inderiv{\varphi}\circ\varphi^{-1}(\Theta)\cdot R - D^2S\circ\varphi^{-1}(\Theta)\cdot\inderiv{\varphi}\circ\varphi^{-1}(\Theta)}.
\end{align*}
Summing terms we get  $$\dot{R} = -\frac{\partial H\circ g^{-1}}{\partial \Theta} - \eta R + \eta \pa{\,^t\varphi'^{-1}\circ\varphi^{-1}(\Theta)\cdot dS\circ\varphi^{-1}(\Theta) }.$$\medskip 
Introducing the modified Hamiltonian  $\hat{H}$ as in the statement, the transformed system has the claimed form \eqref{tesi lemma Hamiltonian1}.  
\end{proof}
The same is true for the pull-back of such a $v$:
\begin{lemma}
\label{lemma Hamiltonian2}
If $g\in\G^{\operatorname{Ham}}$ and $v\in\V^{\Ham}\oplus(-\eta r\partial_r)$, the vector field $\pull{g}v=\push{g^{-1}}v$ is given by 
\begin{equation}
\pull{g}v= \eqsys{\dot{\teta} = \frac{\partial \hat{H}}{\partial r}\\
\dot{r} = -\frac{\partial \hat{H}}{\partial\teta} - \eta r ,}
\end{equation}
$\hat{H}$ being $\hat{H}(\teta,r)= H\circ g\, (\teta,r) + \eta S(\teta)$.
\end{lemma}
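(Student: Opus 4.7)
The plan is to deduce Lemma \ref{lemma Hamiltonian2} directly from Lemma \ref{lemma Hamiltonian1} by exploiting $\pull{g} = \push{g^{-1}}$ together with the fact that $\G^{\Ham}$ is a group.

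First, I would put $g^{-1}$ in the canonical form $g^{-1}(\Theta,R) = (\psi(\Theta), \,^t\psi'^{-1}(\Theta)\cdot(R + dT(\Theta)))$. Solving $g(\teta,r)=(\Theta,R)$ gives $\teta = \varphi^{-1}(\Theta)$ and $r = \,^t\varphi'(\varphi^{-1}(\Theta))\cdot R - dS(\varphi^{-1}(\Theta))$, so I read off $\psi = \varphi^{-1}$ and, noting that $\,^t\psi'^{-1}(\Theta) = \,^t\varphi'(\varphi^{-1}(\Theta))$, I am left with the requirement
\[\,^t\varphi'(\varphi^{-1}(\Theta))\cdot dT(\Theta) = -dS(\varphi^{-1}(\Theta)).\]
A one-line chain-rule computation, using $(\varphi^{-1})'(\Theta)= \varphi'(\varphi^{-1}(\Theta))^{-1}$, shows this is equivalent to $dT = -d(S\circ\varphi^{-1})$, so $T = -S\circ\varphi^{-1}$; the additive constant is fixed by the normalization $T(0)=0$, which is compatible with $S(0)=\varphi(0)=0$. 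In particular $g^{-1}\in\G^{\Ham}$, and the construction simply witnesses that $\G^{\Ham}$ is closed under inversion.

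Next, I would apply Lemma \ref{lemma Hamiltonian1} to $g^{-1}$ acting on $v$. Since $\pull{g}v = \push{g^{-1}}v$, substituting $(\varphi,S)\leadsto(\psi,T)$ in the conclusion of Lemma \ref{lemma Hamiltonian1} yields that $\pull{g}v$ is Hamiltonian-dissipative with the same constant $-\eta$ (the monomial $-\eta r\,\partial_r$ being invariant because $\eta$ is a scalar, as already exploited in the previous lemma) and modified Hamiltonian
\[\hat H(\teta,r) \;=\; H\circ (g^{-1})^{-1}(\teta,r) \,-\, \eta\,T\!\circ\!\psi^{-1}(\teta) \;=\; H\circ g(\teta,r) \,-\, \eta\,T(\varphi(\teta)).\]
Substituting $T\circ\varphi = -S\circ\varphi^{-1}\circ\varphi = -S$ produces the advertised formula $\hat H = H\circ g + \eta S$.

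The only mildly delicate point is correctly identifying the generating function $T$ of $g^{-1}$ in terms of $S$; once this bookkeeping is performed, no new analytical computation is needed, because all the interplay between the symplectic twist and the linear friction has already been handled in Lemma \ref{lemma Hamiltonian1}. As an alternative, one could mirror the direct calculation done there, evaluating $g'^{-1}\cdot(v\circ g)$ componentwise and reorganizing via the chain rule for $H\circ g$; this second route is strictly longer but provides a sanity check on the sign of the $\eta S$ correction.
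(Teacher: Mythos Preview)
Your proposal is correct. The paper does not give an explicit proof of this lemma; it merely prefaces the statement with ``The same is true for the pull-back of such a $v$'' immediately after proving Lemma~\ref{lemma Hamiltonian1}, leaving the reader to infer either a parallel direct computation or precisely the reduction you carry out. Your route---identifying $g^{-1}\in\G^{\Ham}$ with data $(\psi,T)=(\varphi^{-1},-S\circ\varphi^{-1})$ and then invoking Lemma~\ref{lemma Hamiltonian1}---is the cleanest way to make this inference rigorous, and the sign bookkeeping you perform (in particular the chain-rule identity ${}^t\varphi'(\varphi^{-1}(\Theta))\cdot d(S\circ\varphi^{-1})(\Theta)=dS(\varphi^{-1}(\Theta))$) is accurate. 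The alternative you mention, mirroring the componentwise calculation of Lemma~\ref{lemma Hamiltonian1}, is what the paper's phrasing most literally suggests and is what is later indicated for the symplectic analogue (Lemma~\ref{lemma Hamiltonian3}), but your approach avoids repeating that work.
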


\subsubsection{The linearized problem}
Theorem \ref{dissipative Herman} will follow - again - from the inverse function theorem \ref{teorema inversa}, once we check the existence of a right (and left) inverse for $\phi'$ and bounds on it and $\phi''$. \\ Except from a minor difference, the system that solves the linearized problem is the same as the one in the purely hamiltonian context. 
\begin{prop}
\label{proposition infinitesimal ham}
There exists $\eps_0$ such that if $(g,u,\beta)$ is in $\G^{\Ham,\eps_0}_{s+\sigma}\times\U^{\Ham}_{s+\sigma}(\alpha,-\eta)\times\Lambda^{\Ham}$, then for every $\delta v$ in $\pa{\mathcal{V}^{\Ham}\oplus(-\eta r\partial_r)}_{g,s+\sigma}$ there exists a unique triplet $(\delta g,\delta u, \delta \beta)\in T_g\mathcal{G}^{\Ham}_{s}\times\overrightarrow{\mathcal{U}_s}(\alpha,-\eta)\times\Lambda^{\Ham}$ such that 
\begin{equation}
\phi'(g,u,\beta)\cdot (\delta g,\delta u, \delta \beta) = \delta v;
\label{linearized equation ham}
\end{equation}
moreover, we have the following estimate
\begin{equation}
\max\,(\abs{\delta g}_s,\abs{\delta u}_s,\abs{\delta\beta})\leq \frac{C}{\sigma^{\tau'}}\abs{\delta v}_{g,s+\sigma},
\label{linear bounds ham}
\end{equation}
where $\tau'>0$ and $C$ is a constant that depends only on $\abs{g - \id}_{s+\sigma}$ and $\abs{u - (\alpha, -\eta r)}_{s+\sigma}$.
\end{prop}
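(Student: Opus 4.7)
The plan is to mimic the strategy of Proposition \ref{proposition infinitesimal}, exploiting Lemmata \ref{lemma Hamiltonian1}--\ref{lemma Hamiltonian2} so that the Hamiltonian-plus-friction shape is preserved at every step. First I compute
\[
\phi'(g,u,\beta)\cdot(\delta g,\delta u,\delta\beta)=[\push{g}u,\delta g\circ g^{-1}]+\push{g}\delta u+\delta\beta\,\partial_\teta
\]
and pull the equation back by $g$. By Lemma \ref{lemma Hamiltonian2}, both sides lie in $\Vh\oplus(-\eta r\,\partial_r)$ with identical $-\eta r\,\partial_r$ contributions, so the dissipations cancel and the equation reduces, modulo additive constants, to a scalar identity between Hamiltonian germs along $\normal{T}^n_0$. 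Writing $\dot g=g'^{-1}\cdot\delta g$ as the Hamiltonian vector field generated (at $g=\id$) by $G=\dot\varphi(\teta)\cdot r-\dot S(\teta)$, identifying $\delta u$ with its Hamiltonian $\dot K\in\overrightarrow{\mathcal{K}^\alpha}$, and noting that $\delta\beta\,\partial_\teta$ has Hamiltonian $\delta\beta\cdot r$, the Hamiltonian form of the linearized equation reads
\[
\{K,G\}-\eta\dot S+\dot K+\delta\beta\cdot r=\dot H\qquad(\text{mod.\ constants}),
\]
where $\dot H$ is the Hamiltonian of $\pull{g}\delta v$.

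Next I expand in powers of $r$, using $K=c+\alpha\cdot r+\tfrac12 Q(\teta)\cdot r^2+O(r^3)$, to obtain a triangular system in $(\dot S,\dot\varphi,\dot K,\delta\beta)$. The order $r^0$ equation reads
\[
(L_\alpha-\eta)\dot S=\dot H(\teta,0)-\dot c
\]
and is solved by Lemma \ref{normal lemma} applied with $A=-\eta\,\id\in\Mat_n(\R)$; the constant $\dot c$ is uniquely determined as the mean of $\dot H(\teta,0)$ so that the right-hand side has zero average, and any residual constant in $\dot S$ is harmless because only $d\dot S$ enters the exact-symplectic transformation. The order $r^1$ equation is
\[
L_\alpha\dot\varphi=-\partial_r\dot H(\teta,0)+Q(\teta)\cdot d\dot S+\delta\beta,
\]
solved by Lemma \ref{tangent lemma} with $\delta\beta\in\R^n$ fixed as the mean of the right-hand side. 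All higher-order equations determine the $O(r^2)$ part of $\dot K$ algebraically, without further small divisors, so $\delta\beta$ alone suffices as counter-term.

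The main technical point, which will be the subject of a dedicated uniformity remark after this proof, is that all constants in \eqref{linear bounds ham} are independent of $\eta\in[-\eta_0,\eta_0]$. The decisive observation is
\[
\abs{ik\cdot\alpha-\eta}^2=(k\cdot\alpha)^2+\eta^2\geq\frac{\gamma^2}{(1+\abs{k})^{2\tau}}\qquad\forall\, k\in\Z^n\setminus\{0\},
\]
so that the Diophantine condition \eqref{Dio 2} applied with $A=-\eta\,\id$ holds with constants independent of $\eta$. Cauchy's inequality together with the standard loss-of-domain bookkeeping from Proposition \ref{proposition infinitesimal} then yield \eqref{linear bounds ham} with some $\tau'>0$ and a constant depending only on $\gamma,\tau,n$, $\abs{g-\id}_{s+\sigma}$ and $\abs{u-(\alpha,-\eta r)}_{s+\sigma}$. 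The hardest part is to check that no further counter-term (a translation $b$ or a twist $B$ in the normal direction) is needed: this follows from the Hamiltonian constraint on $\delta u$, which forbids a priori a linear-in-$r$ term in the normal direction, combined with the fact that $\eta\,\id$ commutes with every matrix, so the would-be normal obstructions either vanish or can be absorbed into the Hamiltonian corrector $\dot K$.
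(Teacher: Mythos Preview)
Your proof is correct and follows the same route as the paper: pull back the linearized equation, obtain a triangular system, first solve a cohomological equation of type $L_\alpha\pm\eta$ for the generating function $\dot S$ (with zero-average right-hand side, so that the divisors satisfy $|ik\cdot\alpha\pm\eta|\geq|k\cdot\alpha|$ and the estimate is uniform in $\eta$), then the standard $L_\alpha$ equation for $\dot\varphi$ with the obstruction absorbed into $\delta\beta$, and finally read off $\delta u$ from the higher-order terms.

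The only difference is packaging. You work directly with the scalar Hamiltonian identity $\{K,G\}-\eta\dot S+\dot K+\delta\beta\cdot r=\dot H$ and expand in $r$; the paper instead specializes the three vector-field component equations of Proposition~\ref{proposition infinitesimal} to the present setting and then invokes Lemma~\ref{lemma Hamiltonian2} to observe that the normal zero-order right-hand side has zero average (so no $b$ is needed) and that the third equation is the transposed $\teta$-derivative of the first (so no $B$ is needed). Your formulation makes the absence of $b$ and $B$ automatic rather than a separate check, which is a small gain; the paper's formulation keeps the parallel with the general Moser case more visible. One harmless sign slip: with the paper's conventions the $\dot S$-equation reads $d\dot S'\cdot\alpha+\eta\,d\dot S=\dot V_0^H-\dot\Lambda_0$, i.e.\ the operator is $L_\alpha+\eta$ rather than $L_\alpha-\eta$; this does not affect the argument or the uniformity remark.
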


\begin{proof}
The proof is recovered from the one of proposition \ref{proposition infinitesimal}, additionally imposing that the transformation is Hamiltonian and the vector fields belong to this particular class "Hamiltonian + dissipation". The  interesting fact relies on the homological equation intended to "relocate" the torus. \\ Calculating $\phi'(x)\cdot\delta x$ and pulling back equation \eqref{linearized equation ham} we get \[\sq{u,\dot g} + \delta u = \dot v - \dot\lambda,\] here $\dot g$ has the form $\dot g = (\dot\varphi, -r\cdot\dot\varphi' + \dot\rho),$ where $\dot\varphi\in \chi_{s}$ and $\dot\rho = d\dot S\in\mathcal{B}_{s}$. The system to solve translates in
\begin{align*}
\dot{\varphi}'\cdot\alpha - u_1\cdot d\dot{S} &= \dot v_0^{H} -\dot\lambda_0 ,\\
d\dot S'\cdot \alpha +\eta d\dot S &= \dot{V}_0^{H} - \dot\Lambda_0,\\
 - ^t D\dot\varphi' \cdot \alpha + \,^t D(u_1\cdot d\dot S) &= \dot{V}_1^{H} - \dot\Lambda_1,
\end{align*}
where $\dot\lambda_0 = \varphi'^{-1}\cdot\delta\beta$, $\dot\Lambda_0 = -\partial_\teta (^t\varphi'^{-1}\cdot\rho(\teta))\cdot\delta\beta$ and $\dot\Lambda_1 =-\, ^t\dot\lambda_0'$. \\
Thanks to lemma \ref{lemma Hamiltonian2}, the right hand sides consist of Hamiltonian terms, normal directions are of $0$-average and, according to the symmetry of a Hamiltonian system, just the first two equations are needed to solve the whole systems, as the third one (corresponding to the coefficient of the linear term of the $\dot r$-component) is the transpose of the $\teta$-derivative of the first, with opposite sign.\\Coherently, the term $\dot\Lambda_0$ has 0-average and the $d\dot S$-equation can readily be solved.
\begin{rmk}\label{remark uniform} The fact that $d\dot S$ has zero average implies that $$d\dot S (\teta)= 0 + \sum_{k\neq 0} \frac{\dot V_{k}}{i\,k\cdot\alpha + \eta}\,e^{i\,k\cdot\teta}.$$ Hence, when passing to norms on the extended phase space, we can bound the divisors uniformly with respect to $\eta$, since $\abs{i\,k\cdot\alpha + \eta}>\abs{i\,k\cdot\alpha}$; we just need the standard Diophantine condition \eqref{Dio 1}. This will imply that the limit distance $\abs{v - u^0}_{s + \sigma}<\eps$ entailed in theorem \ref{teorema inversa}, will be defined for any $\eta$ varying in some interval containing $0$ ($\eps$ would depend on $\eta$ though $\gamma$ of the Diophantine condition \eqref{DC}, which appears in $C'$ in the bound of $\phi'^{-1}$). This is fundamental for the results in the last section.
\end{rmk}
Solutions and inequalities follow readily from lemmata \ref{tangent lemma}-\ref{normal lemma} and Cauchy's inequality.
\end{proof}
\begin{rmk} {The system above is the one that solves, when $\eta=0$, the infinitesimal problem of the "twisted conjugacy" theorem presented in \cite[\S 1.1]{Fejoz:2010}. Hence, up to the slight difference in the equation determining $d\dot S$, the proof of theorem \ref{theorem Herman} follows the same steps and difficulties as in \cite{Fejoz:2010} (application of theorem \ref{teorema inversa} in the frame of remark \ref{remark polynomial}).}
\end{rmk}
\subsubsection{A first portrait}
\label{general dissipative}
If the eigenvalues $a_i$ of $A$ are all distinct and different from $0$, it is immediate to see that the external parameters are of the form $\lambda = (\beta, B\cdot r)$, with $B$ a diagonal matrix as well (remember lemma \ref{matrix lemma}).
\begin{cor} Let $A\in\Mat_m(\R)$ be diagonalizable with simple, non $0$ eigenvalues. If $v$ is sufficiently close to $u^0\in\U(\alpha,A)$, there exists a unique $(g,u,\lambda)\in \G\times\U(\alpha,A)\times\Lambda(\beta, B\cdot r)$, close to $(\id, u^0,0)$, such that \[\push{g}u + \lambda = v,\] $\lambda$ being of the form $\lambda = (\beta, \operatorname{diag}B\cdot r)$, $B$ being diagonal.
\end{cor}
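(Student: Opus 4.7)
The plan is to apply Theorem \ref{Moser theorem} directly to the pair $(v, u^0)$ and then to identify the subspace of counter terms $\Lambda$ that survives under the stronger spectral hypothesis on $A$. Since the analytic theorem has already been proved in full generality, no new KAM-type estimates are required; the corollary is a purely algebraic reading of Moser's conclusion once the centralizer and kernel of $A$ have been made explicit.

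Concretely, Moser's theorem will produce a unique triple $(g,u,\lambda)\in\G\times\U(\alpha,A)\times\Lambda$, near $(\id,u^0,0)$, such that $\push{g}u+\lambda=v$, where by definition $\Lambda$ consists of constant vector fields $\lambda(\teta,r)=(\beta,b+B\cdot r)$ subject to the two algebraic constraints $A\cdot b=0$ and $[A,B]=0$. I would then observe that under the assumption that $A$ has simple, non-zero eigenvalues, the matrix $A$ is invertible, whence $A\cdot b=0$ immediately forces $b=0$. For the second constraint, recall that the centralizer of a matrix with simple spectrum is spanned by the projectors onto its eigenlines: if $P\in\GL_m(\C)$ satisfies $PAP^{-1}=\operatorname{diag}(a_1,\ldots,a_m)$, then $[A,B]=0$ is equivalent to $PBP^{-1}$ being diagonal, exactly as in the proof of Lemma \ref{matrix lemma}. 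Choosing $A$'s eigenbasis as reference frame, $B$ takes the advertised form $\operatorname{diag}B$, and $\lambda$ reduces to $(\beta,\operatorname{diag}B\cdot r)$.

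Finally, I would verify that the Diophantine hypothesis \eqref{DC} required by Theorem \ref{Moser theorem} is not a fresh burden. In the case of real eigenvalues, the remark following Lemma \ref{matrix lemma} shows that condition \eqref{Dio 2} is redundant and the standard condition \eqref{Dio 1} on $\alpha$ suffices, provided $\gamma<\min_j\abs{\Re a_j}$; if complex-conjugate pairs are present, one keeps the full condition on $(\alpha,a)$, and by hypothesis only finitely many combinations $\imath k\cdot\alpha+l\cdot a$ vanish. In either case no substantive obstacle arises: the result is simply the algebraic reduction of Moser's theorem under the non-degeneracy of the spectrum of $A$.
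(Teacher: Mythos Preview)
Your proposal is correct and follows exactly the paper's reasoning: the paper offers no separate proof environment for this corollary but dispatches it in the sentence preceding the statement, noting that with simple non-zero eigenvalues the counter terms reduce to $\lambda=(\beta,B\cdot r)$ with $B$ diagonal, and pointing back to Lemma~\ref{matrix lemma}. Your explicit unpacking of the two constraints $A\cdot b=0$ (forcing $b=0$ by invertibility) and $[A,B]=0$ (forcing $B$ diagonal in the eigenbasis) is precisely the content of that remark, together with the observation on the Diophantine conditions already made after Lemma~\ref{matrix lemma}.
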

Here a diagram that summarizes our results, from the most general to the purely Hamiltonian one. We emphasizes the parameters in the notation of $\Lambda$. \medskip
\begin{equation*}
\xymatrix{\text{Moser:} & \G\times\U(\alpha,A)\times\Lambda(\beta, b + B\cdot r)\ar[r]_-{\simeq\,\text{loc.}} & \V\\
\text{General dissip. ($\operatorname{diag}A$):} & \G\times\U(\alpha,A)\times\Lambda(\beta, \operatorname{diag} B\cdot r)\ar[r]_-{\simeq\,\text{loc.}} & \V\\
\text{Herman dissip.:} & \G^{\Ham}\times\U^{\Ham}(\alpha,\eta )\times\Lambda(\beta,0)\ar@{^{(}->}[u]\ar[r]_-{\simeq\,\text{loc.}} & \Vh\oplus (\eta r\,\base{r})\ar @{^{(}->}[u]\\
\text{Herman ($\eta=0$):} & \G^{\Ham}\times\U^{\Ham}(\alpha,0)\times\Lambda(\beta,0)\ar[r]_-{\simeq\,\text{loc.}} & \Vh}
\end{equation*}

\subsection{Normal form "à la R\"ussmann"}
\label{Russmann section vector fields}
In the context of the diffeomorphisms of the cylinder $\T\times\R$,  R\"ussmann proved a result that admits among the most important applications in the study of dynamical systems: the "theorem of the translated curve" (for the statement see \cite{Russmann:1970}, or \cite{Yoccoz:Bourbaki} for instance).\\ We give here an extension to vector fields of this theorem.\\
If hamiltonians considered above are non degenerate (see below), we can define a "hybrid normal form" that both relies on the peculiar structure of the vector fields and this torsion property; this makes unnecessary the introduction of \co{all} the counter terms a priori needed if we would have attacked the problem in the pure spirit of Moser.
\subsubsection{Twisted vector fields}
\label{objects spin}
The starting context is the one of section \ref{dissipative Herman} and notations are the same. \\ We are interested in those $K\in\mathcal{K}^\alpha$ of the form 
\begin{equation}
K(\teta,r)= c + \alpha\cdot r + \frac{1}{2}Q(\teta)\cdot r^2 + O(r^3),
\end{equation} 
$Q$ being a non degenerate quadratic form on $\T^n_s$ : $\det\frac{1}{(2\pi)^n}\int Q(\teta)\,d\teta \neq 0$.\\
There exist $s_0$ and $\eps_0$ such that $\forall s>s_0$, $K^0\in\mathcal{H}_s$ and for all $H\in\mathcal{H}_s$ such that $\abs{H - K^0}_{s_0}<\eps_0$ one has \[\abs{\det\int_{\T^n}\frac{\partial^2 H}{\partial r^2}(\teta,0)\,\frac{d\teta}{(2\pi)^n}}\geq \frac{1}{2}\abs{\det\int_{\T^n}\frac{\partial^2 K^0}{\partial r^2}(\teta,0)\,\frac{d\teta}{(2\pi)^n}}\neq 0.\]
We assume that $s\geq s_0$ and define $$\mathcal{K}^\alpha_s = \set{K\in \mathcal{K}^\alpha_s: \abs{K - K^0}_{s_0}\leq \eps_0}.$$
We hence consider the corresponding set of vector fields 
\begin{equation}
\Uh_s(\alpha,0) = \set{u^{K} (\teta,r)= (\alpha + \frac{1}{2}Q(\teta)\cdot r + O(r^2), O(r^2))}, 
\end{equation}
affine subset of $\Vh_s=\set{v^{\h} \text{ vector fields along $\normal{T}^n_s$}}$.\\
Now, fix $\eta\in\R$ and considered the extended spaces 
\begin{equation}
\Uh_s(\alpha,-\eta) \text{ and }\, \pa{\Vh\oplus (-\eta r + \eta\R^n){\partial_r}}_s.
\end{equation}

\begin{rmk} We enlarged the target space with the translations in actions $$ \zeta \mapsto v^{\h}\oplus (-\eta r + \eta\zeta)\partial_r$$ in order to handle symplectic transformations and guarantee the well definition of the normal form operator (see below). Note that the constant $\eta$ multiplying $\zeta$ is unessential; it just lighten notations in calculations and make results below ready-to-use for the application presented in section \ref{section A KAM result}. 
\end{rmk}

Like in the previous section, $\mathcal{D}^\sigma_s$ is the space holomorphic invertible maps $\varphi= \id + v : \T^n_s\to\T^n_\C$, fixing the origin with $\abs{v}_s<\sigma$, while $\mathcal{Z}^\sigma_s$ the space of closed $1$-forms $\rho(\teta)= dS(\teta) + \xi$ on $\T^n_s$ (which we see as maps $\T^n_s\to \C^n$) such that $$\abs{\rho}_s := \max (\abs{\xi}, \abs{dS}_s)<\sigma,$$ we consider the set $\G^{\omega,\sigma}_{s}= \mathcal{D}^\sigma_s\times \mathcal{Z}^\sigma_s$ of those symplectic transformations $g=(\varphi,\rho)$ of the form $$g(\teta,r)= (\varphi(\teta),\,^t\varphi'^{-1}(\teta)\cdot (r + dS(\theta) + \xi)).$$ \\
The tangent space at the identity $T_{\id}\G^{\omega}=\chi_s\times\mathcal{Z}_s$ is endowed with the norm \[\abs{\dot g}_s = \max(\abs{\dot\varphi}_s,\abs{\dot\rho}_s).\] 
Concerning the space of constant counter terms we define the space of translations in action as $$\Lambda(0,b) = \set{ \lambda = (0, b), b\in\R^n}.$$
According to the following lemmata and corollary \ref{cor well def}, the normal form operators (commuting with inclusions) 
\begin{equation}
\begin{aligned}
\phi: \G^{\omega,\sigma^2/2n}_{s+\sigma}\times\Uh_{s+\sigma}(\alpha,-\eta)\times\Lambda(0,b) &\to \pa{\Vh\oplus(-\eta r + \eta\R^n)\partial_r}_s, \\ (g,u,\lambda) &\mapsto \push{g}u + b
\end{aligned}
\label{normal Russmann}
\end{equation} 
are well defined.
\begin{lemma} If $g\in\G^\omega$ and $v\in\V^{\Ham}\oplus\pa{(-\eta r + \eta\zeta)\partial_r)}$, the push forward $\push{g}v$ is given by \[\push{g}v=\eqsys{\dot\Theta = \frac{\partial \hat{H}}{\partial R}\\ \dot R = -\frac{\partial\hat{H}}{\partial\Theta} - \eta(R - \hat\zeta),\quad \hat{\zeta}= \zeta + \xi}\] where $\hat{H}(\Theta, R)= H\circ g^{-1} - \eta (S\circ\varphi^{-1}(\Theta) + \hat{\zeta}\cdot(\varphi^{-1}(\Theta) - \Theta))$. 
\label{lemma symplectic0}
\end{lemma}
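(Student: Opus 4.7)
The plan is to mimic, in this extended setting, the explicit chain-rule computation of Lemma \ref{lemma Hamiltonian1}. The only two new ingredients with respect to that lemma are (i) the constant shift $\xi$ now allowed in the closed form $\rho = dS + \xi$ associated with $g$, and (ii) the constant translation $\eta\zeta\,\partial_r$ now present in the ambient space of vector fields. Both are ``constant'' from the point of view of the cohomological structure, so the argument should proceed in complete analogy with the previous proof.

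Concretely, I would decompose $v = v^{\Ham} + \eta(\zeta - r)\,\partial_r$ and push each piece separately. Symplecticity of $g$ yields at once $\push{g}v^{\Ham} = X_{H\circ g^{-1}}$, the Hamiltonian vector field in the coordinates $(\Theta,R)$. It then suffices to compute $\push{g}\bigl[\eta(\zeta - r)\,\partial_r\bigr]$. Differentiating $(\Theta,R) = g(\teta,r)$ along this vector field, and using the inversion $r = \,^t\varphi'(\teta)\cdot R - dS(\teta) - \xi$ with $\teta = \varphi^{-1}(\Theta)$, a direct expansion yields $\dot\Theta = 0$ and
\[
\dot R \;=\; -\eta R \;+\; \eta\,^t\varphi'^{-1}(\varphi^{-1}(\Theta))\cdot\bigl(dS(\varphi^{-1}(\Theta)) + \hat\zeta\bigr),
\]
where the two constants $\xi$ and $\zeta$ combine naturally as $\hat\zeta := \zeta + \xi$.

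The final step is to recognise the correction term on the right as a gradient plus a constant. Using the chain-rule identity $\partial(f\circ\varphi^{-1})/\partial\Theta = \,^t\varphi'^{-1}(\varphi^{-1}(\Theta))\cdot df(\varphi^{-1}(\Theta))$, applied to $f(\teta)=S(\teta)$ and $f(\teta)=\hat\zeta\cdot\teta$, one checks that
\[
\frac{\partial}{\partial\Theta}\Bigl(S\circ\varphi^{-1}(\Theta) + \hat\zeta\cdot(\varphi^{-1}(\Theta) - \Theta)\Bigr) \;=\; \,^t\varphi'^{-1}(\varphi^{-1}(\Theta))\cdot\bigl(dS(\varphi^{-1}(\Theta)) + \hat\zeta\bigr) \;-\; \hat\zeta.
\]
Absorbing this gradient into $H\circ g^{-1}$ produces exactly the modified Hamiltonian $\hat H$ announced in the statement, and the leftover constant $+\eta\hat\zeta$ furnishes the translation appearing in $\dot R = -\partial\hat H/\partial\Theta - \eta(R - \hat\zeta)$. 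The only real obstacle is the careful bookkeeping of Jacobians in the chain rule, essentially the same sign-check tedium as in Lemma \ref{lemma Hamiltonian1}; no new analytic input is needed. Note in particular that the second term $\hat\zeta\cdot(\varphi^{-1}(\Theta)-\Theta)$ in $\hat H$ is well-defined on the torus precisely because $\varphi^{-1}(\Theta) - \Theta$ takes values in $\R^n$, even though $\varphi^{-1}(\Theta)$ and $\Theta$ individually do not.
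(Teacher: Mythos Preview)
Your proposal is correct and follows essentially the same approach as the paper, which simply says that the proof is identical to that of Lemma~\ref{lemma Hamiltonian1} up to tracking the extra term $\eta\,{}^t\varphi'^{-1}\circ\varphi^{-1}\cdot(\xi+\zeta)$ arising from the non-exactness of $\rho$ and the translation $\zeta$. Your write-up is in fact more detailed than the paper's one-line proof, and your organizational choice of splitting $v$ as $v^{\Ham}+\eta(\zeta-r)\partial_r$ and invoking symplecticity directly for the Hamiltonian piece is a mild streamlining of the brute-force chain-rule computation of Lemma~\ref{lemma Hamiltonian1}, but the content is the same.
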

The proof is the same as for lemma \ref{lemma Hamiltonian1}, taking care of the additional term $\eta \,^t \varphi'^{-1}\circ\varphi^{-1}\cdot(\xi + \zeta)$ coming from the non exactness of $\rho(\teta)$ and the translation $\zeta$.

\comment{
Concerning the pull-back intervening in the equation of $\phi'^{-1}$, we have the following
\begin{lemma}
\label{lemma symplectic}
If $g\in\G^\omega$ and $v\in\V^{\Ham}\oplus{(-\eta r )\partial_r}$, the vector field $\pull{g}v$ is given by 
\begin{equation}
\eqsys{\dot{\teta} = \frac{\partial \hat{H}}{\partial r}\\
\dot{r} = -\frac{\partial \hat{H}}{\partial\teta} - \eta (r +\xi),}
\end{equation}
with $\hat{H}(\teta,r)= H\circ g (\teta,r) + \eta S(\teta)$.
\end{lemma}
Let now consider the parametrization $$\R^n\ni\zeta \mapsto v^H\oplus (-\eta r + \eta\zeta)\partial_r.$$ 

}

\begin{lemma}
\label{lemma Hamiltonian3}
The pull back of $v=v^H\oplus (-\eta r + \eta\zeta)\partial_r$ by a symplectic transformation $g\in\G^\omega$ reads 
\begin{equation}
\pull{g}v = \eqsys{\dot{\teta} = \frac{\partial \hat{H}}{\partial r}\\
\dot{r} = -\frac{\partial \hat{H}}{\partial\teta} - \eta (r - \hat\zeta),\quad \hat\zeta = \zeta - \xi, }
\end{equation}
where $\hat{H}(\teta,r) = H\circ g (\teta,r) + \eta (S(\teta) - \zeta\cdot (\varphi(\teta) - \teta))$.
\end{lemma}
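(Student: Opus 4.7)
The plan is to compute $\pull{g}v$ directly from the identity $\pull{g}v(\teta,r)=(g'(\teta,r))^{-1}\cdot v(g(\teta,r))$, following the template of Lemma \ref{lemma Hamiltonian2} but adapted to the non-exact $1$-form $\rho=dS+\xi$ in $g$ and to the extra translation $\eta\zeta\,\partial_r$ now present in $v$. Equivalently, one could invoke Lemma \ref{lemma symplectic0} applied to $g^{-1}\in\G^\omega$, but a direct derivation makes transparent where each summand of the modified Hamiltonian $\hat H$ originates.

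First I would write $(\Theta,R)=g(\teta,r)$, namely $\Theta=\varphi(\teta)$ and $R=\,^t\varphi'^{-1}(\teta)\cdot(r+dS(\teta)+\xi)$, and record
\[
(g')^{-1}=\pmat{\varphi'^{-1} & 0 \\ -\,^t\varphi'\cdot\partial_\teta g_2\cdot\varphi'^{-1} & \,^t\varphi'}.
\]
Since $g$ is symplectic, the naturality of Hamilton's equations under canonical transformations sends the Hamiltonian part $v^H$ of $v$ to the Hamiltonian vector field of $H\circ g$, contributing $\partial_r(H\circ g)\,\partial_\teta-\partial_\teta(H\circ g)\,\partial_r$ to $\pull{g}v$. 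Only the non-Hamiltonian piece of $v$, together with the correction coming from the fact that $\rho$ is merely closed and not exact, needs careful tracking.

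Applying the second row of $(g')^{-1}$ to the normal contribution $(-\eta R+\eta\zeta)\,\partial_R$ produces $\,^t\varphi'(\teta)\cdot\eta(\zeta-R)\,\partial_r$; substituting $\,^t\varphi'\cdot R=r+dS(\teta)+\xi$ rewrites this as
\[
\bigl(-\eta r-\eta\,dS(\teta)-\eta\,\xi+\eta\,^t\varphi'(\teta)\cdot\zeta\bigr)\partial_r.
\]
Observing the identities $-\eta\,dS(\teta)=-\partial_\teta(\eta S)$ and $\eta\bigl(\,^t\varphi'(\teta)\cdot\zeta-\zeta\bigr)=-\partial_\teta\bigl(-\eta\zeta\cdot(\varphi(\teta)-\teta)\bigr)$, I would reabsorb these two summands into the Hamiltonian contribution; this forces precisely the definition $\hat H(\teta,r):=H\circ g(\teta,r)+\eta\bigl(S(\teta)-\zeta\cdot(\varphi(\teta)-\teta)\bigr)$ and leaves the genuinely dissipative remainder $-\eta(r-\hat\zeta)\,\partial_r$ with $\hat\zeta=\zeta-\xi$. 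The tangent component reads off directly as $\partial_r\hat H$, because the corrections added to build $\hat H$ depend only on $\teta$ and so do not alter $\partial_r(H\circ g)$.

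The main obstacle is purely bookkeeping of signs: checking that $\xi$ contributes $-\xi$ to $\hat\zeta$ (opposite to the push-forward case of Lemma \ref{lemma symplectic0}, where it enters as $+\xi$), and that the sign of the $-\zeta\cdot(\varphi-\teta)$ correction inside $\hat H$ also flips under pull-back. I would cross-verify by specialising to $\xi=0$ (where the statement reduces to Lemma \ref{lemma Hamiltonian2} enriched by the obvious $\zeta$-translation) and to $\zeta=0$ (which should reproduce what Lemma \ref{lemma symplectic0} yields when applied to the inverse of $g$).
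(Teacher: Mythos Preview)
Your proposal is correct and matches the paper's approach: the paper's proof is simply the one-line remark that the computation is immediate from the definition of $g$ and follows the template of Lemma~\ref{lemma Hamiltonian1}, and what you have written is precisely that computation carried out in detail. Your organisation is slightly more structural than the paper's component-by-component derivation in Lemma~\ref{lemma Hamiltonian1} (you invoke symplectic naturality for the $v^H$ piece rather than verifying it by hand), but this is a cosmetic difference, not a genuinely different route.
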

The proof of these results are immediate from the definition of $g$ and follow the one of lemma \ref{lemma Hamiltonian1}.

\begin{thm}[Translated torus]
\label{theorem Russmann}
If $v=v^{\h}\oplus ((-\eta r + \eta\zeta)\partial_r)$ is sufficiently close to $u^0\in\U^{\Ham}(\alpha,-\eta)$, for any $\eta\in [-\eta_0,\eta_0]$, $\eta_0\in\R^{+}$, there exists a unique $(g,u,b)\in \G^{\omega}\times\Uh(\alpha,-\eta)\times\Lambda(0,b)$, close to $(\id,u^0,0)$, such that \[\push{g}u + b\,\partial_{r} = v.\]
 \end{thm}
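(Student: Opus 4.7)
The plan is to mirror the proof of Theorem~\ref{theorem Herman} by applying the abstract inverse function theorem \ref{teorema inversa} to the normal form operator $\phi$ of \eqref{normal Russmann}. The operator is well defined by lemma~\ref{lemma symplectic0} and the same commutation-with-inclusions argument used in the previous sections, and it commutes with source and target spaces. The two ingredients still to verify are: (a) local invertibility of $\phi'(g,u,\lambda)$ at every $(g,u,\lambda)$ in a neighborhood of $(\id,u^0,0)$, with a tame estimate $|\phi'^{-1}|\lesssim \sigma^{-\tau'}$; (b) a tame bound on $\phi''$, which follows exactly as in lemma~\ref{lemma derivata seconda} (the formula for $\phi''$ only involves Lie brackets of $u,\delta u,\dot g$ and a transport term, unchanged by the extra translation direction). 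The uniformity with respect to $\eta\in[-\eta_0,\eta_0]$ comes, as in remark~\ref{remark uniform}, from the trivial bound $|ik\cdot\alpha+\eta|\geq|ik\cdot\alpha|$, so the divisors are controlled using only the standard Diophantine condition \eqref{Dio 1}.

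The heart of the argument is the inversion of $\phi'$. As in proposition~\ref{proposition infinitesimal ham}, after pulling back by $g$, the linearized equation reads $[u,\dot g]+\delta u+\dot\lambda=\dot v$. Now $\dot g\in T_{\id}\G^{\omega}$ decomposes as $\dot g=(\dot\varphi,\;-{}^t\dot\varphi'\cdot r + d\dot S + \dot\xi)$ with an extra constant translation $\dot\xi\in\R^n$, and $\dot\lambda$ is the pull-back of a counter-term of the form $(0,\dot b)$. Using lemma~\ref{lemma Hamiltonian3}, identifying the jet $j^{0,1}$ of both sides gives the triangular system
\begin{align*}
\dot\varphi'\cdot\alpha\;-\;u_1\cdot(d\dot S+\dot\xi) &= \dot v_0^{H},\\
(d\dot S)'\cdot\alpha\;+\;\eta\,d\dot S\;+\;\eta\,\dot\xi\;-\;\dot b &= \dot V_0^{H},
\end{align*}
plus a matrix equation for the linear normal part (solved by lemma~\ref{matrix lemma}), and a remainder which determines $\delta u\in O(r)\times O(r^2)$. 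The exact $1$-form $d\dot S$ is required to have zero mean; this fixes $\dot b$ as the $0$-th Fourier mode of the right hand side of the normal equation, so that lemma~\ref{normal lemma} produces a unique solution $d\dot S$ of zero average, depending linearly on $\dot\xi$.

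Substituting this $d\dot S=d\dot S[\dot\xi]$ into the average of the tangential equation one obtains a finite-dimensional linear equation for $\dot\xi$,
\begin{equation*}
\Big(\int_{\T^n} u_1\,\tfrac{d\teta}{(2\pi)^n}\Big)\cdot \dot\xi \;+\; \Big\langle u_1\cdot d\dot S[\dot\xi]\Big\rangle \;=\; -\langle \dot v_0^{H}\rangle,
\end{equation*}
whose leading term is, up to a small perturbation, the non-degenerate torsion matrix $\langle Q\rangle$ of the unperturbed $K^0$. Shrinking the ball around $u^0$ so that $\int u_1$ stays close to $\langle Q\rangle$ in the norm $|\cdot|_{s_0}$ (see the definition of $\mathcal{K}_s^\alpha$), this equation is uniquely solvable for $\dot\xi$ with a bound independent of $\eta$. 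Once $\dot\xi$ is fixed, lemma~\ref{normal lemma} yields $d\dot S$ and $\dot b$, and lemma~\ref{tangent lemma} applied to the (now zero-mean) tangential equation yields $\dot\varphi$; the remainder absorbs into $\delta u$. Combining the three estimates of lemmata~\ref{tangent lemma}--\ref{matrix lemma} with Cauchy's inequality delivers the tame bound \eqref{linear bounds ham} with a constant uniform in $\eta\in[-\eta_0,\eta_0]$.

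The main obstacle is precisely this use of the torsion hypothesis to dispense with the twist counter-term $\beta\,\partial_\teta$ that appears in theorem~\ref{theorem Herman}. Conceptually, the extra freedom $\dot\xi\in\R^n$ in the enlarged symplectic group $\G^\omega$ plays the role of the missing $\beta$: the non-degenerate torsion converts the freedom to translate the action $r\mapsto r+\xi$ into the freedom to realize any constant drift in the angle direction. With this point clarified, the Newton scheme from theorem~\ref{teorema inversa} runs exactly as in section~\ref{the normal form of Moser}, and Whitney smoothness in $(\alpha,\eta)$ follows from propositions~\ref{smoothness}--\ref{Whitney} as before.
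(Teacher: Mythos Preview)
Your proposal is correct and follows essentially the same approach as the paper's proof: pull back the linearized equation, solve the triangular system using the torsion to fix $\dot\xi$, then appeal to theorem~\ref{teorema inversa}. One small inaccuracy: there is no independent matrix equation to be handled by lemma~\ref{matrix lemma}; since $g\in\G^\omega$ is symplectic, the linear normal part of $\dot g$ is $-{}^t\dot\varphi'\cdot r$, so the equation for the linear normal term is just the transpose of the $\teta$-derivative of the tangential equation and is automatically satisfied once $\dot\varphi$ is determined (exactly as in proposition~\ref{proposition infinitesimal ham}).
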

From the normal form, the image $g(\normal{T}^n_0)$ is not invariant by $v$, but translated in the action direction during each infinitesimal time interval. \\The proof can still be recovered from the inverse function theorem \ref{teorema inversa} (in the frame of remark \ref{remark polynomial}) and propositions \ref{lipschitz}-\ref{smoothness}.
\begin{proof}
The main part consists in checking the invertibility of $\phi'$. Let \[\phi : \G^{\omega,\sigma^2/2n}_{s+\sigma}\times \Uh_{s+\sigma}(\alpha,-\eta)\times\Lambda(0,b)\to ({\Vh\oplus (-\eta r + \eta\R^n)})_s,\] \[(g,u,b)\mapsto \push{g}u + b = v\] and the corresponding $$\phi'(g,u,b): (\delta g,\delta u,\delta b)\mapsto [\push{g}u, \delta g\circ g^{-1}] + \push{g}\delta u + \delta b$$ defined on the tangent space be given. As in proposition \ref{proposition infinitesimal}, we pull it back and expand vector fields along $\normal{T}^n_0$. \\ In this context $$\dot g = g'^{-1}\cdot\delta g = (\dot\varphi, -\,^t\dot\varphi'\cdot r + d\dot S + \dot\xi),$$ with $\dot S\in\mathcal{A}(\T^n_s)$, $\dot\varphi\in\A(\T^n_s,\C^n)$, $\dot\xi\in\R^n.$ 
\begin{align}
\label{varphi}
\dot{\varphi}'\cdot\alpha - Q(\theta)\cdot(d\dot{S} + \dot\xi) &= \dot v_0^{H} ,\\
\label{Ds}
d\dot S'\cdot \alpha +\eta (d\dot S + \dot\xi) &= \dot{V}_0^{H} + \eta\widehat{\delta\zeta} - \dot{b},\\
\label{terza}
 - \,^t D\dot\varphi' \cdot \alpha + \,^t D(Q(\teta)\cdot(d\dot S + \dot\xi)) &= \dot{V}_1^{H},
\end{align} 
where $\dot b$ is of the form $^t\varphi'\cdot \delta b = (\id + ^t v')\cdot\delta b$ (remember that $\varphi = \id + v$). As always we wrote "H" to emphasize the Hamiltonian nature of terms.\\
We are now going to repeatedly apply lemmata \ref{tangent lemma}, \ref{normal lemma} and Cauchy's estimates. As before we do not keep track of constants.
\begin{itemize}[leftmargin=*]
\item Note that, averaging the second equation on the torus, we can determine $$\delta b = \eta (\widehat{\delta\zeta} - \dot\xi),$$ hence solve the average free $$d\dot S'\cdot \alpha + \eta d\dot S = \dot V^{\h}_0 - \,^t v'\cdot\delta b.$$ Denoting $\dot V_0 = \dot V_0^{\h} - \eta ^t v'\cdot\widehat{\delta\zeta}$, the solution can be written as 
\begin{equation}
d\dot S (\teta) = \sum_k \frac{\dot V_{0,k}}{i\,k\cdot\alpha + \eta}e^{i\,k\teta} + \eta M(\teta)\cdot\dot\xi,
\label{dS rus}
\end{equation}
 where $M(\teta)$ is the matrix  whose $(ij)$ component reads $(\sum_k \frac{^t v'^i_{j,k}}{i\,k\cdot \alpha + \eta}\,e^{i\,k\cdot\teta})$. In particular by $\abs{i\,k\cdot\alpha + \eta}\geq \abs{\eta}$, we have $\eta\abs{M}_s\leq {n\abs{v}_{s+\sigma}}/{\sigma}$, which will remain small in all the iterates, not modifying the torsion term (see below).\\ The Fourier coefficients smoothly depend on $\eta$ and remark \ref{remark uniform} holds.
\item Call $S_0$ the first part of \eqref{dS rus}, averaging on the torus equation \eqref{varphi}, and thanks to the torsion hypotheses, we determine 
\begin{equation}
\dot\xi = -\pa{\frac{1}{(2\pi)^n}\media{Q\cdot (\eta M + \id)\,d\teta}}^{-1}\cdot \pa{\frac{1}{(2\pi)^n}\media{\dot v_0 + Q\cdot S_0\,d\teta}},
\end{equation}
and have $$\abs{\dot\xi} \leq \frac{C}{\sigma^{\tau+n}} \abs{\delta v}_{g,s+\sigma},$$ hence 
\begin{equation}
\abs{d\dot S}_{s}\leq \frac{C}{\gamma\sigma^{\tau + n}}\abs{\delta v}_{g,s+\sigma}\quad \text{and}\quad \abs{\delta b}\leq \frac{C}{\gamma\sigma^{\tau+n}}\abs{\delta v}_{g,s+\sigma}.
\end{equation} 
\item There remains to solve equation \eqref{varphi}; since 
\begin{equation}
\dot\varphi = L_\alpha^{-1} (\dot v_0 + Q\cdot (d\dot S + \dot\xi))
\end{equation}
we have 
\begin{equation}
\abs{\dot\varphi}_{s-\sigma} \leq \frac{C}{\gamma^2\sigma^{2\tau + 2n}}\abs{\delta v}_{g,s+\sigma}.
\end{equation}
As $\delta g = g'\cdot \dot g$, we have the same sort of estimates for the wanted $\delta g$:
\begin{equation}
\abs{\delta g}_{s-\sigma}\leq \frac{1}{\sigma}(\abs{g - \id}_{s+\sigma} + 1)\frac{C}{\gamma^2 \sigma^{2\tau + 2n }}\abs{\delta v}_{g,s+\sigma}.
\end{equation}
\item Again, $\sq{u,\dot g} + \delta u = \dot v - \dot b$ determines $\delta u$ explicitly, and we have
\begin{equation*}
\abs{\delta u}_{s-\sigma}\leq \frac{C}{\gamma^2\sigma^{2\tau + 2n + 1}}\abs{\delta v}_{g, s+\sigma}.
\end{equation*}
Up to defining $\sigma' = \sigma/2$ and $s' = s + \sigma$ we have proved the following lemma for all $s',\sigma'$ such that $s'< s' + \sigma'$.
\end{itemize}
\begin{lemma}
\label{lemma Russmann}
If $(g,u,b)$ are in $\G_{s+\sigma}^{\omega,\sigma^2/2n}\times\Uh_{s+\sigma}(\alpha,-\eta)\times\Lambda(0,b)$ then for every $\delta v$ in $({\Vh\oplus (-\eta r + \eta\R^n)})_{g,s+\sigma}$, there exists a unique triplet $(\delta g,\delta u, \delta \lambda)\in T_g\mathcal{G}_{s}^{\omega}\times\overrightarrow{\Uh_s}(\alpha,-\eta)\times\Lambda(0,b)$ such that 
\begin{equation}
\phi'(g,u,\lambda)\cdot (\delta g,\delta u, \delta \lambda) = \delta v;
\end{equation}
moreover, we have the following estimate
\begin{equation*}
\max\,(\abs{\delta g}_s,\abs{\delta u}_s,\abs{\delta b})\leq \frac{C'}{\sigma^{\tau'}}\abs{\delta v}_{g,s+\sigma},
\end{equation*}
$C'$ being a constant depending on $\abs{g - \id}_{s+\sigma}$ and $\abs{u - (\alpha, -\eta r)}_{s+\sigma}$.
\end{lemma}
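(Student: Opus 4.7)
The plan is to invert $\phi'(g,u,b)$ by pulling the linearized equation back to the standard torus $\normal{T}^n_0$ and reducing it to the triangular system of cohomological equations of Section \ref{dissipative Herman}. Starting from
\[\phi'(g,u,b)\cdot(\delta g,\delta u,\delta b) = [\push{g}u,\delta g\circ g^{-1}] + \push{g}\delta u + \delta b = \delta v,\]
I apply $\pull{g}$ and set $\dot g := \inderiv{g}\cdot\delta g$, $\dot v := \pull{g}\delta v$, so that the equation reads $[u,\dot g] + \delta u = \dot v - \dot b$. Unlike in Section \ref{dissipative Herman}, since $g\in\G^\omega$ is merely symplectic and not exact, $\dot g$ acquires an extra constant translation $\dot\xi\in\R^n$ in the action direction, namely $\dot g = (\dot\varphi,\,-\,^t\dot\varphi'\cdot r + d\dot S + \dot\xi)$. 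This is precisely the new parameter needed to absorb the constant part of the tangent equation via the torsion hypothesis.

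Expanding along $\normal{T}^n_0$ and identifying orders in $r$ up to the mixed jet $j^{0,1}$ produces three equations: a tangent one coupling $\dot\varphi'\cdot\alpha$ with the twist term $Q(\teta)\cdot(d\dot S + \dot\xi)$; a normal constant-part one of the form $(L_\alpha + \eta)d\dot S + \eta\dot\xi = \dot V_0^{\h} + \eta\,\widehat{\delta\zeta} - \dot b$; and a normal linear-in-$r$ equation which, by symplectic symmetry, is the transpose of the $\teta$-derivative of the tangent equation and so can be discarded. The translation $\delta b$ is used to kill the average of the normal equation, reducing it to an average-free cohomological equation solvable by a variant of Lemma \ref{normal lemma}. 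The crucial observation is that $\abs{\imath k\cdot\alpha + \eta}\geq \abs{k\cdot\alpha}$, so the inversion is uniform in $\eta$ on a neighborhood of $0$ and only the standard Diophantine condition \eqref{Dio 1} is required, as in Remark \ref{remark uniform}.

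The main obstacle I expect is the simultaneous determination of $\dot\xi$ and $d\dot S$, since both unknowns appear coupled in the tangent and normal equations. My strategy is to first solve the normal equation for $d\dot S$ treating $\dot\xi$ as a parameter, obtaining $d\dot S = S_0 + \eta M(\teta)\cdot\dot\xi$ with $\abs{\eta M}_s$ controlled by $\abs{v}_{s+\sigma}/\sigma$, and then to substitute back into the average of the tangent equation. The torsion hypothesis $\det \int_{\T^n} Q(\teta)\,d\teta/(2\pi)^n \neq 0$, which is stable under the smallness condition $\abs{H - K^0}_{s_0}<\eps_0$ used to define $\mathcal{K}^\alpha_s$, makes the matrix $\int_{\T^n} Q\cdot(\eta M + \id)\,d\teta$ invertible and determines $\dot\xi$ uniquely, hence also $d\dot S$. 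Lemma \ref{tangent lemma} then solves the average-free tangent equation for $\dot\varphi$, after which $\delta u$ is read off as the remainder $\dot v - [u,\dot g] - \dot b$ and lies in $\overrightarrow{\Uh}_{s-\sigma}$ modulo a fixed loss of analyticity width. All bounds assemble via Cauchy's inequality with total loss $\sigma^{\tau'}$ for some $\tau'$ depending on $n$ and $\tau$, and uniqueness follows from the triangular structure.
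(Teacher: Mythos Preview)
Your proposal is correct and follows essentially the same route as the paper's own argument: pull back to $\normal{T}^n_0$, write the triangular system with the extra translation $\dot\xi$, use $\delta b$ to kill the average of the normal equation, solve it for $d\dot S = S_0 + \eta M(\teta)\cdot\dot\xi$ with $\eta\abs{M}_s$ small, invoke the torsion hypothesis on the averaged tangent equation to fix $\dot\xi$, then recover $\dot\varphi$ by Lemma~\ref{tangent lemma} and $\delta u$ as the remainder. Your remark that the linear-in-$r$ normal equation is redundant by the symplectic symmetry mirrors exactly what the paper uses (cf.\ the discussion after Proposition~\ref{proposition infinitesimal ham}).
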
  
Concerning the bound on $\phi''$, the analogue of lemma \ref{lemma derivata seconda} follows readily.\medskip \\It just remains to apply theorem \ref{teorema inversa}, and complete the proof for the chosen $v$ in $\pa{\Vh\oplus (-\eta r + \eta\R^n)\partial_r}_{s+\sigma}\in \V=\bigcup_{s>0}\V_s$. 
\end{proof}
We conclude the section with a second diagram.\\ 
\hfill{
\begin{equation*}
\xymatrix{\text{Moser:} & \G\times\U(\alpha,A)\times\Lambda(\beta, b + B\cdot r)\ar[r]_-{\simeq\,\text{loc.}} & \V\\
\text{"à la R\"ussmann":} & \G^\omega\times\U^{\Ham}(\alpha,-\eta )\times\Lambda(0,b)\ar[r]_-{\simeq\,\text{loc.}} & \Vh\oplus (-\eta r + \eta\R^n)\base{r})\ar @{^{(}->}[u] }
\end{equation*} 

\section{Extension of Herman's and R\"ussmann's theorems to simple normally hyperbolic tori}
\label{general Russmann}
The peculiarity of the normal forms proved in the previous section, is that the translated (or twisted) $\alpha$-quasi-periodic torus $g(\normal{T}^n_0)$ of the perturbed $v$ keeps its $\eta$-normally attractive dynamics (resp. repulsive, if $\eta<0$), the reason of such a result relying on the Hamiltonian nature of perturbations.

On $\T^n\times\R^m$, let $u\in\U(\alpha,A)$. We will say that $\text{T}^n_0$ is \emph{simple normally hyperbolic} if $A$ has simple, non $0$,  real eigenvalues. \\ Note that the space of matrices $A\in\Mat_m(\R)$ with simple non $0$ real eigenvalues is open in $\Mat_m(\R)$, thus it provides a consistent interesting set of frequencies to work on.\\ We show here that for general perturbations, at the expense of conjugating $v- \lambda$ to a vector field $u$ with different (opportunely chosen) normally hyperbolic dynamics,  we can show that a translated or twisted reducible $\alpha$-quasi-periodic Diophantine torus exists. After all, the classic translated curve theorem of R\"ussmann does not provide any information on the normal dynamics of the curve, but just the tangent one; for this reason it seems significant to us to give the following more general results (Theorems \ref{theorem C} and \ref{theorem D} stated in the introduction).\\
Notations are the same as in section \ref{the normal form of Moser}.

Let $\Delta^s_m(\R)\subset\Mat_m(\R)$ be the space of matrices with simple, non $0$, real eigenvalues and let
\comment{ Let first
\begin{equation*}
\U_s = \bigcup_{A\in\Delta_m(\R)} \U_s(\alpha,A) = \set{u(\teta,r)= (\alpha + O(r), A\cdot r + O(r^2)),\, A\in\Delta_m(\R)}, 
\end{equation*} 
and }
\begin{equation*}
\widehat{\U}_s = \bigcup_{A\in\Delta^s_m(\R)} \U_s(\alpha,A) = \set{u(\teta,r)= (\alpha + O(r), A\cdot r + O(r^2)),\, A\in\Delta^s_m(\R)}.
\end{equation*}

\comment{Eventually, denote with $\Lambda_{n+m^2}$ the set of $\lambda=(\beta, b + B\cdot r)\in\Lambda$ with $B$ having $0$-diagonal elements.
\begin{prop}
For every $u^0\in \U_{s+\sigma}(\alpha,A^0)$ with $\alpha$ Diophantine, there is a germ of $C^{\infty}$-maps
\begin{equation*}
\psi: \V_{s+\sigma}\to \G_{s}\times \U_{s}\times\Lambda_{n+m^2},\quad v\mapsto (g,u,\lambda),
\end{equation*} at $u^0\mapsto (\id, u^0, 0)$, such that $v = \push{g}u + \lambda$.
\label{general}
\end{prop}   
}

\comment{\begin{rmk} Since we assumed that $A$ has simple eigenvalues, the (eventual) translation vector $b$ of $\lambda = (\beta, b + B\cdot r)$, is of dimension $1$.
\end{rmk}}

\begin{thm}[Twisted torus]
For every $u^0\in {\U}_{s+\sigma}(\alpha,A^0)$ with $\alpha$ Diophantine and $A^0\in\Delta^s_m(\R)$, there is a germ of $C^{\infty}$-maps
\begin{equation*}
\psi: \V_{s+\sigma}\to \G_{s}\times \widehat{\U}_{s}\times\Lambda(\beta),\quad v\mapsto (g,u,\beta),
\end{equation*} at $u^0\mapsto (\id, u^0, 0)$, such that $v = \push{g}u + \beta\,\partial_\teta$.
\label{proposition B=0}
\end{thm}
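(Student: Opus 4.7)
The plan is to use the finite-dimensional freedom in the choice of the normal matrix $A$ to absorb the linear counter-term $B\cdot r$, via a standard elimination-of-parameters argument based on the finite-dimensional implicit function theorem. The constant $b$-counter-term is already absent thanks to the invertibility of $A$, as recorded in the corollary concluding Section~\ref{general dissipative}, which yields Moser's normal form with $\lambda=(\beta,\operatorname{diag} B\cdot r)$ whenever the normal matrix has simple, non-zero real eigenvalues; only the diagonal block $B$ remains to be killed, and this is achieved by tuning $A$ itself.

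First I would reduce to the case where $A^0$ is diagonal: since $A^0\in\Delta^s_m(\R)$ admits a real eigenbasis, there exists $P\in\GL_m(\R)$ with $PA^0P^{-1}$ diagonal, and conjugation by the linear element $(\teta,r)\mapsto (\teta, Pr)\in\G$ preserves all the spaces appearing in the statement. Next, for each $\tilde A$ in a small neighborhood $\mathcal{A}\subset\Delta^s_m(\R)$ of $A^0$ inside the $m$-dimensional space of real diagonal matrices, I consider the shifted base point $u^{0,\tilde A}:= u^0 + (0,(\tilde A - A^0)\cdot r)\in\U(\alpha,\tilde A)$, which is close to $u^0$, and apply the corollary above: for $v$ in a uniform neighborhood of $u^0$ one obtains a jointly smooth family
\[v\longmapsto \bigl(g^{\tilde A}(v),\,u^{\tilde A}(v),\,\beta^{\tilde A}(v),\,B^{\tilde A}(v)\bigr)\]
taking values in $\G_s\times\U_s(\alpha,\tilde A)\times\R^n\times\mathcal{A}$, satisfying $v=\push{g^{\tilde A}(v)}u^{\tilde A}(v) + \bigl(\beta^{\tilde A}(v),\,B^{\tilde A}(v)\cdot r\bigr)$. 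Joint smoothness in $(v,\tilde A)$ follows from the Whitney-smooth dependence of $\psi$ on the characteristic numbers (Theorem~\ref{abstract Moser}), together with the fact that in the real-eigenvalue case no Diophantine condition beyond \eqref{Dio 1} is needed on $\tilde A$, so the small-divisor bounds stay uniform as $\tilde A$ varies in $\mathcal{A}$.

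The conclusion comes from the finite-dimensional implicit function theorem applied to $\Phi(v,\tilde A):= B^{\tilde A}(v)$. One has $\Phi(u^0,A^0)=0$, and for any diagonal variation $\delta A$ the triplet $\bigl(\id,\,u^0+(0,\delta A\cdot r),\,(0,-\delta A\cdot r)\bigr)$ tautologically solves $\push{g}u+\lambda=u^0$ with $u\in\U(\alpha,A^0+\delta A)$ and diagonal counter-term $-\delta A$, so uniqueness in the corollary forces
\[B^{A^0+\delta A}(u^0)=-\delta A, \qquad \partial_{\tilde A}\Phi\big|_{(u^0,A^0)}=-\id,\]
which is invertible. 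The implicit function theorem then produces a germ of $C^\infty$ map $v\mapsto\tilde A(v)$ with $\tilde A(u^0)=A^0$ and $B^{\tilde A(v)}(v)\equiv 0$, and $\psi(v):=\bigl(g^{\tilde A(v)}(v),\,u^{\tilde A(v)}(v),\,\beta^{\tilde A(v)}(v)\bigr)$ is the sought map, since $u^{\tilde A(v)}(v)\in\U(\alpha,\tilde A(v))\subset\widehat\U_s$ automatically. The real technical obstacle is not in this last step --- the derivative $-\id$ is tautological via Moser's uniqueness --- but in verifying that the family $\{\psi^{\tilde A}\}_{\tilde A\in\mathcal{A}}$ is defined on a common neighborhood of $u^0$ uniform in $\tilde A$; this follows by inspecting the quantitative estimates of Proposition~\ref{proposition infinitesimal}, whose constants depend only on $\abs{u-(\alpha,\tilde A\cdot r)}_{s+\sigma}$ and on the Diophantine constant $\gamma$, the latter remaining bounded below uniformly on $\mathcal{A}$ since the eigenvalues of $\tilde A$ stay away from zero.
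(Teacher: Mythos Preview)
Your proof is correct and follows essentially the same route as the paper's: reduce to diagonal $A^0$, let the normal matrix $\tilde A$ vary as a finite-dimensional parameter, observe via uniqueness of Moser's normal form that $B^{A^0+\delta A}(u^0)=-\delta A$ so that $\partial_{\tilde A}B=-\id$, and conclude by the implicit function theorem. Your write-up is in fact somewhat more careful than the paper's about the uniformity of the neighborhood in $\tilde A$ and the source of joint smoothness.
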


\begin{proof}
Without loss of generality we suppose that $A^0$ is already in its diagonal form of real, simple, non $0$ eigenvalues.\\
We denote $\phi_A$ the normal form operator $\phi$ (recall definition in \eqref{normal operator}), since in Moser's theorem $A$ was fixed while now we want it to vary. Identifying with $\R^m$ the space of diagonal $m$-dimensional matrices, we define the map 
\begin{equation*}
\hat\psi : \R^m\times \V_{s+\sigma} \mapsto \G\times\widehat{\U}_s\times\Lambda,\quad \hat{\psi}_A(v) := \phi'^{-1}_A (v) = (g,u,\lambda), 
\end{equation*}
 locally in the neighborhood of $(A^0,u^0)$, such that $\push{g}u +\lambda = v.$\\ Let us now write $u^0$ as 
\begin{equation*}
u^0 = \pa{\alpha + O(r), (A^0 - A)\cdot r + A\cdot r + O(r^2)}.
\end{equation*}
Since $$ \phi_A \pa{\id, u^0 + (0,(A - A^0)\cdot r), (0, (A^0 - A)\cdot r)} \equiv u^0,$$ locally for all $A$ close to $A^0$ we have $$\hat{\psi}(A, u^0) = (\id, u, B\cdot r),\quad B (A, u^0) = (A^0 - A) = \delta A.$$ In particular $$\frac{\partial B}{\partial A} = - \id, $$ hence $A\mapsto B(A)$ is a local diffeomprhism\footnote{Recall that $B$ is diagonal since eigenvalues are simple.}; thus by the implicit function theorem locally for all $v$ there exists a unique $\bar A$ such that $B (\bar A, v)=0$. It remains to define $\psi(v) = \hat{\psi}(\bar A, v)$.
\end{proof}

\comment{Corollary \ref{proposition B=0} immediatly follows from condition $[A^0,B]=0$, $A^0\in\Delta^s_m(\R)$ and the classic implicit function theorem.}

\begin{rmk} The fact that $A^0$ has real eigenvalues makes the correction $A = A^0 + \delta A$ of $A^0$ (provided by the implicit function theorem) well defined. If we had considered possibly complex eigenvalues, submitted to Diophantine condition \eqref{DC}, the procedure would have been more delicate, using the Whitney dependence of $\phi$ in $A$. In this line of thought see \cite{Fejoz:2004} and the "hypothetical conjugacy" theorem therein.
\end{rmk}

\begin{rmk}  If we let the possiblility of having a $0$ eigenvalue, the torus would be twisted-translated, due to the presence of $b\in\ker A \equiv \R$, providing a generalization in higher dimension, for vector fields, of Herman's translated torus theorem for perturbations of smooth embeddings of $F : \T^n\times [-r_0,r_0]\to \T^n\times \R$ verifying $F(\teta,0) = (\teta + \alpha,0)$, see \cite{Yoccoz:Bourbaki}.
\end{rmk}

On $\T^n\times\R^m$, with $m\geq n$, suppose that vector fields in $\U(\alpha,A)$ have a twist, in the sense that the coefficient $u_1: \T^n\to \Mat_{n\times m}(\R)$ in $$u(\teta,r)= (\alpha + u_1(\teta)\cdot r + O(r^2), A\cdot r + O(r^2))$$ is such that $\int_{\T^n} u_1(\teta)\frac{d\teta}{(2\pi)^n}$ has maximal rank $n$.

\begin{thm}[Translated torus]
\comment{For every $u^0\in \U_{s+\sigma}(\alpha,A^0)$ with $\alpha$ Diophantine and $A^0\in\Delta^s_m(\R)$, there is a germ of $C^{\infty}$-maps
\begin{equation*}
\psi: \V_{s+\sigma}\to \G_{s}\times \widehat{\U}_{s}\times\R^n,\quad v\mapsto (g,u,b),
\end{equation*} at $u^0\mapsto (\id, u^0, 0)$, such that $v = \push{g}u + b\,\partial_r$.}
Let $\alpha$ be Diophantine and let $u^0\in\U_{s+\sigma}(\alpha,A^0)$, with $A^0\in\Delta^s_m(\R)$, have a twist. Every $v$ sufficiently close to $u^0$ possesses a translated simple normally hyperbolic torus on which the dynamics is $\alpha$-quasi-periodic.
\label{translated torus}
\end{thm}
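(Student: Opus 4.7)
The strategy I would adopt is to reduce the statement to Theorem \ref{proposition B=0} and then apply the finite-dimensional implicit function theorem to eliminate the remaining angular counter-term, the non-degeneracy being furnished by the twist hypothesis.

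First I would introduce an auxiliary translation parameter $b\in\R^m$ and apply Theorem \ref{proposition B=0} to the shifted vector field $v - (0,b)$. For every pair $(v,b)$ in a neighbourhood of $(u^0,0)$ this yields a unique smooth triple $\bigl(g(v,b),\,u(v,b),\,\beta(v,b)\bigr)\in \G_s\times \widehat{\U}_s\times \R^n$ with
\[
 \push{g(v,b)} u(v,b) + \beta(v,b)\,\partial_{\teta} \;=\; v - (0,b),
\]
equivalently $v = \push{g(v,b)} u(v,b) + (\beta(v,b),\,b)$. At the base point $(v,b)=(u^0,0)$ the solution is $(\id,u^0,0)$, in particular $\beta(u^0,0)=0$. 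The goal is then to solve $\beta(v,b)=0$ in $b=b(v)$ near this base point.

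To apply the implicit function theorem I would compute $\partial \beta/\partial b$ at $(u^0,0)$. Differentiating the defining identity with respect to $b$ and using $g(u^0,0)=\id$ (so that no pull-back is needed) reduces the problem to the linearised system of Proposition \ref{proposition infinitesimal} with data $\delta v = -(0,\dot b)$; its order-zero tangential and normal components read
\[
 \dot\varphi'\cdot\alpha - u_1\cdot \dot R_0 \,=\, -\dot\beta,\qquad \dot R_0'\cdot\alpha - A^0\cdot \dot R_0 \,=\, -\dot b.
\]
Since $-\dot b$ is constant in $\teta$ and $A^0$ is invertible, the unique solution of the second equation is the constant field $\dot R_0 = (A^0)^{-1}\dot b$. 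Averaging the first equation on $\T^n$ then gives the explicit formula
\[
 \dot\beta \;=\; \overline{u_1}\cdot(A^0)^{-1}\,\dot b,\qquad \overline{u_1} \,:=\, \frac{1}{(2\pi)^n}\int_{\T^n} u_1(\teta)\,d\teta.
\]

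By the twist hypothesis $\overline{u_1}:\R^m\to \R^n$ has maximal rank $n$, and $A^0$ is an automorphism of $\R^m$, so $\overline{u_1}\cdot(A^0)^{-1}$ is surjective with $(m-n)$-dimensional kernel $K$. Choosing any $n$-dimensional linear complement $W$ of $K$ in $\R^m$ and restricting $b$ to $W$ makes $\partial\beta/\partial b\bigl|_{(u^0,0)}:W\to \R^n$ a linear isomorphism. The classical implicit function theorem then produces a unique smooth $v\mapsto b(v)\in W$ near $u^0$ with $b(u^0)=0$ and $\beta(v,b(v))\equiv 0$, giving the translated-torus normal form
\[
 v \,=\, \push{g(v)} u(v) + (0,b(v)),\qquad u(v)\in \widehat{\U}_s(\alpha, A(v)),
\]
so that $g(v)(\text{T}^n_0)$ is the desired $\alpha$-quasi-periodic, translated, simple normally hyperbolic torus. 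The only nontrivial step is the identification of $\partial\beta/\partial b$ with the cohomological system above; once one recognises that only the order-zero equations contribute to $\dot\beta$, the twist hypothesis does exactly the work it is designed for and the finite-dimensional implicit function theorem closes the proof.
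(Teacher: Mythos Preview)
Your argument is correct and follows the same basic strategy as the paper: reduce to the twisted-torus result (Theorem~\ref{proposition B=0}) and then kill the remaining angular counter-term $\beta$ by means of a translation parameter and the finite-dimensional implicit function theorem, the submersion condition being supplied by the twist hypothesis.

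The execution differs in two points worth noting. First, the paper parametrises the family by a shift $c$ in the argument, considering $v_c(\theta,r)=v(\theta,r+c)$; to make the derivative $\partial_c\beta^0(0)=u_1$ transparent it first performs a preliminary conjugation by $F(\theta,r)=(\theta+\hat\varphi(\theta)\cdot r,r)$ that renders $u_1$ constant (equal to its average). You instead parametrise by an additive constant $(0,b)$ and compute $\partial_b\beta$ directly from the cohomological equations \eqref{Teta}--\eqref{R_0}; since the datum $-\dot b$ is constant and $A^0$ is invertible, $\dot R_0=(A^0)^{-1}\dot b$ is constant regardless of whether $u_1$ is, and averaging \eqref{Teta} gives $\dot\beta=\overline{u_1}\,(A^0)^{-1}\dot b$ without any preliminary straightening. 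This is a small but genuine simplification. Second, you make explicit the choice of an $n$-dimensional section $W\subset\R^m$ when $m>n$, which the paper only alludes to in the final sentence (``When $n=m$ this $c\in\R^n$ is unique''). One caveat: the linearised system you invoke is strictly speaking that of the operator behind Theorem~\ref{proposition B=0} (where $A$ varies and the counter-term is $\beta$ alone), not Proposition~\ref{proposition infinitesimal} verbatim; but since the extra unknown $\delta A$ enters only at the level of equation~\eqref{R_1} and not in \eqref{Teta}--\eqref{R_0}, your computation of $\dot\beta$ is unaffected.
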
   

Like in Theorem \ref{theorem Russmann}, if on the one hand we take advantage of the twist hypothesis in order to avoid the twist-term $\beta$ à la Herman, on the other one the linear term $A\cdot r$ necessairily gives out a constant (translation) term, which one need to keep track of by the introduction of a translation parameter (remember the form of equation \eqref{R_0} or \eqref{Ds}), that adds up to counter terms already needed. And this can be directly seen from the normal form at the first order; the proof is an immediate consequence of the torsion hypothesis on $u^0$.
\begin{proof}
Let $\hat{\varphi}$ be the function defined on $\T^n$ taking values in $\Mat_{n\times m}(\R)$ that solves the (matrix of) homological equation\footnote{Each component reads as an equation of the scalar case in Lemma \ref{normal lemma}} \[L_{\alpha}\hat\varphi(\teta) + \hat\varphi(\teta)\cdot A + u_1(\teta) = \int_{\T^n} u_1(\teta)\frac{d\teta}{(2\pi)^n},\] and let $F: (\teta,r)\mapsto (\teta + \hat{\varphi}(\teta)\cdot r,r)$. The diffeomorphism $F$ restricts to the identity at $\normal{T}^n_0$. At the expense of substituting $u^0$ and $v$ with $\push{F}u^0$ and $\push{F}v$ respectively, we can assume that 
$$u(\teta,r)= (\alpha + u_1\cdot r + O(r^2), A\cdot r + O(r^2)),\quad u_1 = \int_{\T^n} u_1(\teta)\frac{d\teta}{(2\pi)^n}.$$ The germs so obtained are close to one another. \\ Consider now the family of trivial perturbations obtained by translating $u^0$ in actions $u^0_c (\teta, r) = u^0(\teta, c + r),\quad c\in\R^m.$ Taking its Taylor expansion and the approximation obtained by cutting it from terms $O(c)$ that possibly depend on angles, we immediately read its normal form where, in particular, $b^0(c)= A\cdot c + O(c^2)$ and $\beta^0(c) = u_1\cdot c + O(c^2)$. The map $c\mapsto \beta^0(c)$ is indeed a submersion since $\partial_c\beta^0(0) = u_1$ and the twist hypotesis on $u^0.$ The analogous map for $v_c$, being its small $C^1$ perturbation, is thus submersive too. The theorem follows from the same argument of Theorem \ref{proposition B=0} (applied to eliminate $B$), together with keeping track of the supplementary translation term $b(c)$, and the fact that $c\mapsto \beta(c)$ is sumbersive, guaranteeing the existence of $c$ such that $\beta(c)=0.$ Hence for this $c$, $v_c = \push{g}u + b$. When $n=m$ this $c\in\R^n$ is unique. 
\end{proof}
\section{An application to Celestial Mechanics} 
\label{section A KAM result}
\comment{To deduce the existence of invariant tori from the normal forms presented up to now, we must eliminate the obstruction to the dynamical conjugacy represented by the counter-terms. To do so, the idea is to exploit the following reasoning. Notations are the ones given in sections \ref{Spaces of vector fields} and \ref{subsection The infinitesimal equation}.  \\
Suppose that the vector field $v\in\V$ smoothly depends on some external parameter $\Omega\in B^N(0)$ (the unit ball in $\R^N$) and that, $u^0\in\U(\alpha,A)$ being given, $v$ is sufficiently close to it. Suppose also that estimates proven in propositions \ref{proposition infinitesimal} and lemma \ref{lemma derivata seconda} are uniform with respect to $\Omega$ over some closed subset of $\R^n$. The parametrized version of Moser's theorem follows readily. Calling $\phi_\Omega$ and $\psi_{\Omega}$ the corresponding parametrizations of the normal form operators,  let $$\psi_\Omega : v \mapsto (g, u, \lambda) $$ be the triplet given by the theorem; if  $\Omega\mapsto\lambda(\Omega)$ is submersive, there exists $\bar\Omega$ such that $\lambda(\bar\Omega)=0$. In particular, if $N$ equals the dimension of $\Lambda$, this point is locally unique. The corresponding $g$ hence conjugates $v$ and $u$.\medskip \\
The normal form, thus reduces the issue of proving the existence of an invariant torus to the applicability of the standard inverse function theorem in finite dimension.}

The normal forms constructed in section \ref{Hamiltonian-dissipative section} fit well in the dissipative spin-orbit problem. We deduce here the central results of \cite[Theorem $3.1$]{Locatelli-Stefanelli:2012} and \cite[Theorem $1$]{Celletti-Chierchia:2009}, by easy application of the translated torus theorem \ref{theorem Russmann} and the elimination of the translation parameter.

\subsection{Spin-orbit in $n$ d.o.f.}

\subsubsection{Normal form \& elimination of $b$ }
\label{elimination of parameters}
We consider a vector field on $\T^n\times\R^n$ of the form \[\hat{v} = v^{\h}\oplus (-\eta (r -\Omega)\partial_r)\] where $v^{\h}$ is a Hamiltonian vector-field whose Hamiltonian $H$ is close to the Hamiltonian in Kolmogorov normal form with non degenerate quadratic part introduced in section \ref{objects spin}: \[K^0(\teta,r)= \alpha\cdot r + \frac{1}{2}Q(\teta)\cdot r^2 + O(r^3).\] The vector field $\hat{v}$ is hence close to the corresponding unperturbed $\hat{u}:$ \[\hat{u}= u^{K^0}\oplus (-\eta (r - \Omega)\partial_r).\] $\Omega\in\R^n$ is a vector of free parameters representing some "external frequencies" (we will see in the concrete example of the "spin-orbit problem" the physical meaning of $\Omega$). We will note $v$ and $u^0$ the part of $\hat{v}$ and $\hat{u}$ with $\Omega=0$.
\begin{thm}[Dynamical conjugacy]
\label{teorema implicita}
Let $v^{\h}$ be sufficiently close to $u^{K^0}$.  There exists a unique $\Omega\in\R^n$ close to $0$, a unique $u\in\U^{\Ham}(\alpha,-\eta)$ and a unique $g\in\G^{\omega}$ such that $\hat{v} = v + \eta\Omega\partial_r$ (close to $\hat{u} = u^0 + \eta\Omega\partial_r$) is conjugated to $u$ by $g$: $\hat{v} = \push{g}u$.
\end{thm}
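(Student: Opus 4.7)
The plan is to apply the translated torus theorem (Theorem \ref{theorem Russmann}) to $\hat{v}$, and then eliminate the translation counter-term $b$ by tuning the external parameter $\Omega$, via the classical implicit function theorem in finite dimension.

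First, observe that $\hat{v} = v^{\h} + \eta\Omega\,\partial_r - \eta r\,\partial_r$ belongs, for each small $\Omega\in\R^n$, to the target space $\pa{\Vh\oplus(-\eta r + \eta\R^n)\partial_r}_{s+\sigma}$ of the normal form operator \eqref{normal Russmann}. Since the bound on admissible perturbations in Theorem \ref{theorem Russmann} is uniform in $\eta\in[-\eta_0,\eta_0]$ (see Remark \ref{remark uniform}), and since the additional term $\eta\Omega\,\partial_r$ is small with $\Omega$, for $v^{\h}$ sufficiently close to $u^{K^0}$ the theorem yields, uniformly in $\Omega$ small, a unique triple
\[
\pa{g(v^{\h},\Omega),\, u(v^{\h},\Omega),\, b(v^{\h},\Omega)} \in \G^{\omega}\times \Uh(\alpha,-\eta)\times \R^n
\]
depending smoothly on $(v^{\h},\Omega)$ and such that
\[
\push{g}u + b\,\partial_r = \hat{v}.
\]

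Next, I compute $\partial_{\Omega} b$ at the base point $(v^{\h},\Omega)=(u^{K^0},0)$. At this point $\hat{v}=u^0$ and the uniqueness part of Theorem \ref{theorem Russmann} gives $g=\id$, $u=u^0$, $b=0$. Freezing $v^{\h}=u^{K^0}$ and varying $\Omega$ alone, the ansatz $g=\id$ reduces the equation $u + b\,\partial_r = u^0 + \eta\Omega\,\partial_r$ to $(u - u^0) + b\,\partial_r = \eta\Omega\,\partial_r$; since the directing space $\overrightarrow{\Uh}(\alpha,-\eta)$ consists of germs whose normal component is $O(r^2)$, uniqueness forces $u = u^0$ and $b = \eta\Omega$. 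Therefore
\[
\partial_{\Omega} b\,(u^{K^0},0) = \eta\,\id_{\R^n},
\]
which is invertible for any $\eta\neq 0$.

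Finally, the classical implicit function theorem in $\R^n$ provides a unique smooth germ $v^{\h}\mapsto \Omega(v^{\h})$, close to $0$, solving $b(v^{\h},\Omega(v^{\h}))=0$. For this choice of $\Omega$ one has $\push{g}u = \hat{v}$, proving the claimed dynamical conjugacy; uniqueness of the triple $(g,u,\Omega)$ follows from uniqueness both in Theorem \ref{theorem Russmann} and in the implicit function theorem. The main conceptual point is the non-degeneracy $\partial_{\Omega}b = \eta\,\id$ at the origin, which is precisely where the dissipation $\eta\neq 0$ is used to trade the "lack of parameters" obstruction against the external frequency vector $\Omega$; everything else is routine once the uniform-in-$\eta$ version of the translated torus theorem is in hand.
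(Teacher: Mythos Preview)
Your proof is correct and follows essentially the same route as the paper: apply the translated torus theorem \ref{theorem Russmann} to obtain $(g,u,b)$ smoothly in $\Omega$, verify that for the unperturbed system $b(\Omega)=\eta\Omega$ (hence $\partial_\Omega b=\eta\,\id$ at the origin), and conclude by the finite-dimensional implicit function theorem. The paper phrases the non-degeneracy step slightly differently---arguing that $\Omega\mapsto b(\Omega)$ being a local diffeomorphism is a $C^1$-open property and checking it on $\hat u$---but the content is identical, and your explicit remark that $\eta\neq 0$ is where dissipation is essential is apt.
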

\begin{proof}
Let us write the non perturbed $\hat{u}:$ 
\begin{equation}
\hat{u} = \eqsys{\dot\teta = \alpha + O(r)\\ \dot r = -\eta r + \eta\Omega + O(r^2).}
\label{sistema Locatelli Stefanelli}
\end{equation}
We remark that $\eta\Omega$ is the first term in the Taylor expansion of the counter term $b$ appearing in the normal form of theorem \ref{theorem Russmann}, applied to $\hat{v}$ close to $\hat{u}$. In particular $\hat{u}= \push{\id} u^0 + \eta\Omega\partial_r$ by uniqueness of the normal form and, if $\Omega = 0$, $\normal{T}^n_0$ is invariant for \eqref{sistema Locatelli Stefanelli}. \medskip \\Hence consider the family of maps \[\amat{llcl}{\psi\,: &(\Vh\oplus\pa{-\eta(r - \Omega)\partial_{r},\hat{u}}&\to  &\pa{\G^{\omega}\times\U(\alpha,-\eta)\times\Lambda(0,b), (\id,u^0,\eta\Omega)} \\ & \hat{v} & \mapsto & \psi(\hat{v}):=\phi^{-1}(\hat{v})=(g,u, b)}\] associating to $\hat{v}$ the unique triplet provided by the translated torus theorem \ref{theorem Russmann}.  \\ In order to prove that the equation $b=0$ implicitly defines $\Omega$, it suffices to show that $\Omega\mapsto b(\Omega)$ is a local diffeomorphism; since this is an open property with respect to the $C^1$-topology, and $\hat{v}$ is close to $\hat{u}$, it suffices to show it for $\hat{u}$, which is immediate. \\
Note in particular that $b = \sum_k \delta b_k$ where $\delta b_k$, uniquely determined at each step of the Newton scheme, is of the form $\delta b_k = \eta(\widehat{\delta\Omega_k} - \dot{\xi}_k)$.\\ Hence $b = \eta\Omega + (\text{perturbations}<< \eta\Omega)$. So there exists a unique value of $\Omega$, close to $0$, such that $b(\Omega)=0$.
\end{proof}
Note that the size of perturbation $\abs{\hat{v} - \hat{u}}_{s+\sigma}$, is independent of $\Omega$ and that constants $C'$ and $C''$ (appearing in \eqref{bound phi'^-1} and \eqref{bound phi''} in the proof of theorem \ref{teorema inversa}) are eventually uniform with respect to $\Omega$ over some closed subset of $\R^n$.
\begin{rmk}
\label{remark nu}
$\Omega$ is the value that compensates the "total translation" of the torus, given by the successive translations provided by the $\xi'$s at each step of the Newton algorithm; this can be directly seen by looking at the iterates of the Newton operator of theorem \ref{teorema inversa} applied to this problem. Using the same notations, we have $x_0 = (\id, u^0, \eta\Omega)$, $\phi(x_0)= u^0 + \eta\Omega\partial_r$ hence \[x_1 = x_0 + \phi'^{-1}(x_0)\cdot(v - \phi(x_0)),\] where $(v-\phi(x_0))$ has no more $\eta\Omega\partial_r$. Thus the term $\delta b_1$ determined by $\phi'^{-1}(x_0)\cdot(v-\phi(x_0))$ results in $\delta b_1 = - \eta\delta\xi_1$ (remember system \eqref{varphi}-\eqref{Ds}-\eqref{terza}). At the second iterate, $\delta b_2 = - \eta\delta\xi_2$,  since the term we called $\eta\widehat{\delta\Omega}$ (given by the pull-back of $\delta v_2$ by $g_1$ determined at the previous step) is $\eta\widehat{\delta\Omega}= \eta(\delta\xi_1 - \delta\xi_1)=0$\footnote{Because of the form of $g$ and the fact that $\xi\in\R^n$, the terms $\delta\xi$ and $\dot\xi$ appearing in $\delta g$ and $\dot g = g'^{-1}\cdot\delta g$ are the same.}. And so on.
\end{rmk}

\subsection{Spin-Orbit problem of Celestial Mechanics}
\label{The spin-orbit problem}
Applying theorems \ref{theorem Russmann} and \ref{teorema implicita}, the elimination of the obstructing translation parameter $b$ provides here a picture of the space of parameters proper to this physical system (see theorem \ref{teorema superfici}).\\
A satellite (or a planet) is said to be in $n:k$ spin-orbit resonance when it accomplishes $n$ complete rotations about its spin axis, while revolving exactly $k$ times around its planet (or star). There are various examples of such a motion in Astronomy, among which the Moon $(1:1)$ or Mercury $(3:2)$.

The "dissipative spin-orbit problem" of Celestial Mechanics can be modeled by the following equation of motion in $\R$:
\begin{equation}
\ddot\teta + \eta(\dot\teta - \nu) + \eps \partial_\teta f(\teta,t) = 0,
\label{spin-orbit equation}
\end{equation}

where $(\teta,t)\in\T^2$, the angular variable $\teta$ determines the position of an oblate satellite (modeled as an ellipsoid) whose center of mass revolves on a given elliptic Keplerian orbit around a fixed massive major body, $\eta>0$ is a dissipation constant depending on the internal non rigid structure of the body that responds in a non-elastic way to the gravitational forces, $\eps>0$ measures the oblateness of the satellite while $\nu\in\R$ an external free parameter proper to the physical problem. We suppose that the potential function $f$ is real analytic in all its variables.\\
See \cite{Laskar-Correia:2010} and references therein for a complete physical discussion of the model and deduction of the equation.
\comment{
We want to study the dynamics of the rotation about its spin axis (represented by the angular variable $\teta$) of a triaxial body whose center of mass revolves along a given elliptic Keplerian orbit around a fixed massive point. The rotation axis is supposed to be perpendicular to the orbital plane. The internal structure of the body responds in a non-elastic way to gravitational forces. In the case of an ellipsoid with different equatorial axis, the calculation of the potential gives out a supplementary term proportional to the difference of the two smallest axes of inertia which perturbs the rotation. Under these hypothesis, the dynamics is described by the following equation of motion in $\R$:

\begin{equation}
\ddot\teta + \eta(\dot\teta - \nu) + \eps \partial_\teta f(\teta,t) = 0.
\label{spin-orbit equation}
\end{equation}
}

\comment{ 
 In this model $(\teta,t)\in\T^2$, $\eta\in\R^{+}$ is the constant of dissipation, $\nu\in\R$ a free-parameter proper to the physical problem. We suppose that the potential function $f$ is real analytic in all its variables.\\ 
 
 }
 
Let now $\alpha$ be a fixed Diophantine frequency. In the coordinates $(\teta, r = \dot\teta - \alpha)$ the system associated to \eqref{spin-orbit equation} is
\begin{equation}
\eqsys{\dot\teta = \alpha + r\\
\dot r= -\eta r + \eta(\nu - \alpha) - \eps\partial_\teta f(\teta,t).}
\label{Celletti-Chierchia}
\end{equation}
We immediately see that when $\eps = 0$ and $\eta\neq 0$, $r = 0$ is an invariant torus provided that $\nu=\alpha$. Furthermore, the general solution of $\ddot\teta + \eta(\dot\teta - \nu)=0$ is given by \[\teta(t)=\nu t + \teta^0 + \frac{r_0 - (\nu - \alpha)}{\eta}(1-e^{-\eta t}),\] showing that the rotation tends asymptotically to a $\nu$-quasi-periodic behavior. Here the meaning of $\nu$ is revealed: $\nu$ is the frequency of rotation to which the satellite tends because of the dissipation, if no "oblate-shape effects" are present.\\ On the other hand when $\eps\neq 0$ and $\eta = 0$ we are in the conservative regime, and the classical KAM theory applies.\\
The main question then is: fixing $\alpha$ Diophantine does there exist a value of the proper rotation frequency $\nu$ such that the perturbed system possesses an $\alpha$-quasi-periodic invariant $\eta$-attractive torus?\footnote{In \cite{Celletti-Chierchia:2009} they look for a function $u:\T^2\to\R$ of the form $\teta(t)= \alpha t + u(\alpha t, t)$ satisfying \eqref{spin-orbit equation} for a particular value $\nu$. This function is found as the solution of an opportune PDE.}

\subsubsection{Extending the phase space}
In order to apply our general scheme to the non autonomous system \eqref{Celletti-Chierchia}, as usual we extend the phase space by introducing the time (or its translates) as a variable. The phase space becomes $\T^2\times\R^2$ with variable $\teta_2$ corresponding to time and $r_2$ its conjugated.\\
Hence consider the family of vector fields (parametrized by $\Omega\in\R$) $$v=v^{\h} \oplus (-\eta r + \eta\Omega)\partial_{r},$$ where $\Omega = (\nu-\alpha,0)$ and $v^{\h}$ corresponds to $$H(\teta,r)= \alpha\cdot r_1 + r_2 + \frac{1}{2}r_1^2 + \eps f(\teta_1,\teta_2).$$
\comment{\begin{equation}
v = \eqsys{\dot{\teta}_1 = \alpha + r_1\\
\dot{\teta}_2 = 1\\
\dot{r}_1 = -\eta (r_1 - (\nu - \alpha)) - \eps\frac{\partial f}{\partial\teta_1}\\
\dot{r}_2 = -\eta r_2 - \eps\frac{\partial f}{\partial\teta_2}. }
\end{equation}}
The following objects are essentially the ones introduced in section \ref{Russmann section vector fields}, taking into account the introduction of the time-variable $\teta_2 = t$ and its conjugated $r_2.$ 
Let $\bar\alpha = (\alpha,1)$ satisfy 
\begin{equation}
\abs{k_1 \alpha + k_2}\geq \frac{\gamma}{\abs{k}^\tau},\quad \forall k\in\Z^2\setminus\set{0}.
\label{diophantine time}
\end{equation}
Let $\bar{\mathcal{H}}$ be space of real analytic Hamiltonians defined in a neighborhood of $\normal{T}_0 =\T^2\times\set{0}$ such that for $H\in\bar{\mathcal{H}}$,  $\partial_{r_2} H \equiv 1$. For these Hamiltonians the frequency $\dot\teta_2 = 1$ (corresponding to time) is fixed. Let $\bar\alpha = (\alpha,1)$ and $\bar{\mathcal{K}} = \bar{\mathcal{H}}\cap \mathcal{K}^{\bar\alpha}$. Let also $\bar{\G}^{\omega}$ be the subset of $\G^{\omega}$ such that $\bar\xi=(\xi,0), \varphi(\teta)=(\varphi_1(\teta),\teta_2).$ The corresponding $\dot g\in T_{\id} \bar\G$ are $\dot g = (\dot\varphi, \,^t\dot\varphi'\cdot r + d\dot S + \dot\xi)$ with $\dot\varphi = (\dot\varphi_1,0)$ and $\dot\xi = (\dot\xi_1,0)$. Eventually $\bar\Lambda=\set{\lambda:\, \lambda(\teta,r)= b\base{r_1}} \equiv \R$
\\

By restriction, the normal form operator $$\bar\phi:\bar\G^{\omega,\sigma^2/2n}_{s+\sigma}\times\Uh_{s+\sigma}(\bar\alpha,-\eta)\times\bar\Lambda\to\pa{\Vh\oplus (-\eta r + \eta\R )\partial_r}_s,$$ $$(g,u,\lambda)\mapsto \push{g}u + b\partial_{r_1},$$ and the corresponding \[\bar\phi'(g,u,\lambda): T_g\mathcal{G}^{\omega,\sigma^2/2n}_{s+\sigma}\times\overrightarrow{\Uh_{s+\sigma}}(\bar\alpha,-\eta)\times\bar\Lambda\to\pa{\Vh\oplus(-\eta r + \eta\R)\partial_r}_{g,s},\] are now defined. 

\begin{cor}[Normal form for time-dependent perturbations] 
\label{Russmann spin-orbit}
The operator $$\bar\phi : \bar\G^{\omega,\sigma^2/2n}_{s+\sigma}\times\Uh_{s+\sigma}(\bar\alpha,-\eta)\times\bar\Lambda\to\pa{\Vh\oplus (-\eta r + \eta\R\partial_{r_1})}_s $$ is a local diffeomorphism.
\end{cor}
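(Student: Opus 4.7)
The plan is to deduce the corollary from Theorem \ref{theorem Russmann} by restricting every factor of the normal form operator to the subspaces compatible with the constraint $\dot\teta_2\equiv 1$. As a preliminary step I would check well-definedness: since $g\in\bar\G^\omega$ fixes $\teta_2$ (so $^t\varphi'(\teta)^{-1}$ has block-triangular form with a $1$ on the $(2,2)$-entry) and $\partial_{r_2}H\equiv 1$ for $H\in\bar{\mathcal H}$, a direct application of Lemma \ref{lemma symplectic0} yields that the second angular component of $\push{g}u$ remains identically $1$ and the $r_2$-component retains the form $-\eta r_2 + O(r^2)$; hence $\push{g}u+b\,\partial_{r_1}$ indeed lies in $\pa{\Vh\oplus(-\eta r+\eta\R\partial_{r_1})}_s$, and no $r_2$-translation is ever generated by the construction.

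The heart of the argument is to rerun the invertibility analysis of $\bar\phi'$ exactly as in the proof of Theorem \ref{theorem Russmann}. After pulling the linearized equation back via Lemma \ref{lemma Hamiltonian3} and expanding along $\normal{T}^2_0$ with the structural constraints $\dot\varphi=(\dot\varphi_1,0)$ and $\dot\xi=(\dot\xi_1,0)$, the cohomological system \eqref{varphi}-\eqref{Ds}-\eqref{terza} decouples: the $\teta_2$- and $r_2$-rows collapse to trivial identities $0=0$ because the time direction is rigid, while the remaining scalar equations in the $r_1$-direction are of exactly the same type as in Theorem \ref{theorem Russmann}. The Diophantine hypothesis \eqref{diophantine time} on $\bar\alpha=(\alpha,1)$ plays the role of \eqref{Dio 1}, so Lemmata \ref{tangent lemma}-\ref{normal lemma} furnish unique analytic solutions with the standard polynomial loss of width; the effective torsion being the scalar $1$, the average inversion producing $\dot\xi_1$ and $\delta b\in\bar\Lambda\equiv\R$ is immediate, matching the one-dimensional degree of freedom on the target side. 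Combining these bounds with the estimate on $\bar\phi''$ inherited from Lemma \ref{lemma derivata seconda}, and applying the abstract inverse function theorem \ref{teorema inversa} in the polynomial regime, would produce the local diffeomorphism.

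The main point to keep watch on, as noted in Remark \ref{remark uniform}, is the lower bound $\abs{i\,k\cdot\bar\alpha+\eta}\geq\abs{i\,k\cdot\bar\alpha}$ on the shifted small divisors appearing in the equation for $d\dot S$, which makes every estimate uniform in $\eta$ over a compact neighborhood of $0$; this uniformity is essential for the subsequent elimination-of-parameters argument of Theorem \ref{teorema implicita}, where $\eta$ is the physical dissipation constant and must remain free. No substantially new analytic difficulty arises beyond what was already handled in Theorem \ref{theorem Russmann}: the corollary is essentially a bookkeeping statement, verifying that the translated-torus construction respects the natural symmetry $\dot\teta_2\equiv 1$ imposed by the non-autonomous-to-autonomous reduction and that the scalar translation $b\in\R$ remains the unique obstruction to dynamical conjugacy.
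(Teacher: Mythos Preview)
Your approach is essentially the paper's: restrict the construction of Theorem \ref{theorem Russmann} to the subspaces compatible with the time constraint, re-run the inversion of $\bar\phi'$, inherit the bound on $\bar\phi''$, and apply Theorem \ref{teorema inversa}. One detail is inaccurate, however. While the $\teta_2$-row of the cohomological system does collapse (the paper confirms $\dot\varphi_2=0$ and $\dot v_{2,0}=0$), the $r_2$-row does \emph{not}. The perturbation $f$ depends on $\teta_2$, so the $r_2$-component of $\pull{g}\delta v$ at order zero is nonzero, and the paper's proof displays the corresponding equation
\[
d\dot S_2'\cdot\bar\alpha + \eta\, d\dot S_2 \;=\; \dot V^{H}_{2,0} - \partial_{\teta_2}v^1\,\delta b,
\]
which must be solved for the second component $d\dot S_2$ of the closed form $d\dot S$. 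This is harmless for the scheme---the right-hand side has zero torus average (both terms are $\teta$-derivatives), so Lemma \ref{normal lemma} applies with the same loss of width and no extra counter-term is required---but it is not the identity $0=0$, and omitting it would leave $g$ under-determined. With this correction your sketch coincides with the paper's argument.
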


The proof is recovered from the one of theorem \ref{theorem Russmann}, taking into account that the perturbation belongs to the particular class $\bar{\mathcal{H}}$.

\begin{lemma}[Inversion of $\bar\phi'$]
If $(g,u,\lambda)\in \bar\G^{\omega,\sigma^2/2n}_{s+\sigma}\times\Uh_{s+\sigma}(\bar\alpha,-\eta)\times\bar\Lambda $, for every $\delta v\in \pa{\Vh\oplus(-\eta r + \eta\R\partial_{r_1})}_{g,s+\sigma}$ there exists a unique triplet $(\delta g, \delta u, \delta \lambda)\in T_g\mathcal{G}^{\omega}_{s}\times\overrightarrow{\U}^{\Ham}_{s}(\bar\alpha,-\eta)\times\Lambda$ such that \[\bar\phi'(g,u,\lambda)(\delta g,\delta u,\delta\lambda)=\delta v;\] moreover \[\max\set{\abs{\delta g}_s,\abs{\delta u}_s,\abs{b}}\leq \frac{C'}{\sigma^{\tau'}}\abs{\delta v}_{g,s+\sigma},\] the constant $C'$ depending only on $\abs{g - \id}_{s+\sigma}$ and $\abs{u - (\alpha,-\eta r)}_{s+\sigma}$.
\end{lemma}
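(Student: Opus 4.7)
The plan is to mimic the proof of Lemma \ref{lemma Russmann} verbatim, but restricted to the subspace encoded by the bar'ed spaces, and to check that the restriction is compatible with the inversion. Concretely, given $\delta v\in\pa{\Vh\oplus(-\eta r + \eta\R\partial_{r_1})}_{g,s+\sigma}$, I would first pull back the linearized equation
\[\sq{\push{g}u,\delta g\circ g^{-1}} + \push{g}\delta u + \delta\lambda = \delta v\]
by $g$ to obtain, along the standard torus $\normal{T}^n_0$, an equation of the form $\sq{u,\dot g} + \delta u + \dot\lambda = \dot v$, with $\dot g = g'^{-1}\cdot\delta g$ and $\dot\lambda = g^{\ast}\delta\lambda$. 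Writing $\dot g = (\dot\varphi,-\,^t\dot\varphi'\cdot r + d\dot S + \dot\xi)$ with $\dot\varphi = (\dot\varphi_1,0)$ and $\dot\xi = (\dot\xi_1,0)$, and identifying the jet $j^{0,1}$ of both sides, I obtain exactly the triangular system \eqref{varphi}--\eqref{Ds}--\eqref{terza} of the proof of Theorem \ref{theorem Russmann}, except that the unknowns sit in the restricted subspaces $T_{\id}\bar\G^\omega$ and $\bar\Lambda\equiv\R$.

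The two points that need verification (and constitute the only substantive difference with respect to Lemma \ref{lemma Russmann}) are: (i) the second component of the $\dot\varphi$-equation is automatically satisfied because the right-hand side of that component vanishes: the assumption $\partial_{r_2}H\equiv 1$ forces $u_1^{2j}\equiv 0$ for $j=1,\dots,n$ and $\dot v_0$ has vanishing second component; and (ii) the equation for $d\dot S$ in the direction $r_2$ is simply $d\dot S_2'\cdot\bar\alpha + \eta d\dot S_2 = \dot V^{\h}_{0,2}$, whose right-hand side is average-free by Hamiltonian structure, so no additional counter-term in the $r_2$ direction is needed, which is coherent with $b\in\R$ multiplying only $\partial_{r_1}$. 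These two checks show that the unique solution of the unrestricted system automatically lies in the restricted classes.

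After that verification, the solution proceeds exactly as in Lemma \ref{lemma Russmann}: averaging the second (normal) equation on $\T^n$ determines the scalar $\delta b = \eta(\widehat{\delta\zeta} - \dot\xi_1)$; the remaining average-free part of the $d\dot S$-equation is solved by Lemma \ref{normal lemma}, using the uniform-in-$\eta$ bound $|ik\cdot\bar\alpha + \eta|\geq|ik\cdot\bar\alpha|$ of Remark \ref{remark uniform} (this is crucial, because the $(\alpha,1)$-Diophantine condition \eqref{diophantine time} plays the role of \eqref{Dio 1}); averaging the $\dot\varphi_1$-equation and using the non-degenerate torsion $Q$ (inherited from $K^0$) determines $\dot\xi_1$ via a finite-dimensional linear system whose matrix is $\eps_0$-close to $\int Q\,d\teta/(2\pi)^n$; finally Lemma \ref{tangent lemma} solves for $\dot\varphi_1$, and $\delta u\in\overrightarrow{\Uh}_s(\bar\alpha,-\eta)$ is read off from the remainder $\sq{u,\dot g} + \delta u = \dot v - \dot\lambda$.

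The main technical point — really the only one beyond bookkeeping — is maintaining the invariance of the restricted class throughout; everything else, including the final estimate
\[\max\set{|\delta g|_s,|\delta u|_s,|b|}\leq \frac{C'}{\sigma^{\tau'}}|\delta v|_{g,s+\sigma},\]
follows by the same chain of Cauchy inequalities and triangle inequalities used in Lemma \ref{lemma Russmann}, with $C'$ depending only on $|g - \id|_{s+\sigma}$ and $|u - (\bar\alpha,-\eta r)|_{s+\sigma}$. The relation $\delta g = g'\cdot\dot g$ transfers estimates from $\dot g$ to $\delta g$ at the cost of a factor $\sigma^{-1}(1 + |g - \id|_{s+\sigma})$, absorbed into $C'$ and a slightly larger $\tau'$.
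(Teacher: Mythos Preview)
Your proposal is correct and follows essentially the same route as the paper: both reduce the problem to the system of Lemma~\ref{lemma Russmann} and then verify that the restriction to the bar'ed spaces (i.e.\ $\dot\varphi_2=0$, $\dot\xi_2=0$, $\delta b\in\R$) is compatible with the triangular solution. One small omission in your point~(ii): since $\dot\lambda=\pull{g}\delta\lambda$ and $\varphi=(\varphi_1,\teta_2)$ has $^t\varphi'$ with off-diagonal entry $\partial_{\teta_2}v^1$ (where $v^1=\varphi_1-\id$), the $d\dot S_2$-equation actually reads $d\dot S_2'\cdot\bar\alpha+\eta\,d\dot S_2=\dot V^{\h}_{2,0}-\partial_{\teta_2}v^1\cdot\delta b$, as the paper records; but $\partial_{\teta_2}v^1$ has zero average on $\T^n$, so your conclusion that no counter-term is needed in the $r_2$-direction is unaffected.
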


\begin{proof}
Following the calculations made in the proof of lemma \ref{lemma Russmann} we need to solve the following homological equations: 
 \begin{align*}
 \dot{\varphi}_1'\cdot\bar\alpha - Q_{11}(\teta)(d\dot S_1 + \dot{\xi}_1) &= \dot{v}^{H}_{1,0}\\
d\dot S_1' \cdot \bar\alpha + \eta (d\dot S_1 + \dot{\xi}_1) &= \dot{V}^{H}_{1,0} + \eta\widehat{\delta\Omega} - (\delta b + \partial_{\teta_1} v^1\delta b)\\
d\dot{S}_2'\cdot\bar\alpha + \eta d\dot{S}_2 &= \dot{V}_{2,0}^{H} -\partial_{\teta_2}v^1\delta b,
\end{align*}

 The lower indices indicate the component and the order of the corresponding term in $r$ whose they are the coefficient.\footnote{We noted with $v^1 = \varphi_1 - \id$, coming from the first component of $\varphi = (\varphi_1,\id)$.} Hence, the first one corresponds to the direction of $\teta$ and the second twos to the zero order term in $r$ in the normal direction.\\ The tangential equation relative to the time component (that we omitted above) is easily determined: computation gives $\dot v_{2,0} = 0$,  because of $\delta v\partial_{\teta_2} = \delta 1 = 0$ and the form of $g'^{-1}$, and $\dot\varphi_2 = 0$, as well as $Q(\teta)\cdot d\dot S\,\partial_{\teta_2}=0$.\\ Equations relative to the linear term, follow from the Hamiltonian character.
Solutions follows from lemmata \ref{tangent lemma} and \ref{normal lemma}, the same kind of estimates as in lemma \ref{lemma Russmann} hold, hence the required bound. 
\end{proof}
\begin{lemma} There exists a constant $C''$, depending on $\abs{x}_{s+\sigma}$ such that in a neighborhood of $(\id,u^0,0) \in \bar\G^{\omega}_{s+\sigma}\times\Uh_{s+\sigma}(\bar\alpha,-\eta)\times\bar\Lambda $ the bilinear map $\phi''(x)$ satisfies the bound
\begin{equation*}
\abs{\phi''(x)\cdot\delta x^{\tensor 2}}_{g,s}\leq \frac{C''}{\sigma^{\tau''}}\abs{\delta x}^2_{s+\sigma}.
\end{equation*}
\end{lemma}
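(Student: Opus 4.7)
The plan is to mirror Lemma \ref{lemma derivata seconda} in this restricted spin-orbit setting. Because $\bar\phi$ is the restriction of the normal-form operator $\phi$ of \eqref{normal Russmann} to the closed affine sub-spaces $\bar\G^{\omega}\subset\G^{\omega}$, $\Uh(\bar\alpha,-\eta)$ and $\bar\Lambda\subset\Lambda(0,b)$, the bilinear form $\bar\phi''(x)$ coincides with $\phi''(x)$ whenever $\delta x$ is tangent to these sub-spaces. It therefore suffices to rerun the computation of Lemma \ref{lemma derivata seconda} and then verify that every resulting constant is uniform in $\eta\in[-\eta_0,\eta_0]$.

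The first step is purely formal. Starting from $\phi'(x)\cdot\delta x = \sq{\push{g}u,\delta g\circ g^{-1}} + \push{g}\delta u + \delta\lambda$ and differentiating once more in $x=(g,u,\lambda)$, using $\delta g^{-1} = -(\inderiv{g}\cdot\delta g)\circ g^{-1}$ together with the Leibniz rule for brackets and push-forwards, I obtain, after pulling back by $g$,
\[ \pull{g}\bar\phi''(x)\cdot\delta x^{\tensor 2} = 2\sq{\delta u,\dot g} + \sq{\sq{u,\dot g},\dot g} + \sq{u,\,\inderiv{g}\cdot(\delta g'\cdot\inderiv{g}\cdot\delta g)}, \]
with $\dot g = \inderiv{g}\cdot\delta g$. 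The constant term $\delta\lambda$ drops out of $\phi''$, and the above is exactly the expression obtained in Lemma \ref{lemma derivata seconda}.

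The analytic part is then routine. Each Lie bracket is controlled by the bracket estimate of Lemma \ref{lemma lie brackets}, and each derivative is converted from $\abs{\cdot}_{s+\sigma}$ to $\abs{\cdot}_s$ by one application of Cauchy's inequality, losing a factor $\sigma^{-1}$ per derivative. Since $g$ is close to the identity, $\inderiv{g}$ is bounded in terms of $\abs{g-\id}_{s+\sigma}$; since $u$ is close to $(\bar\alpha,-\eta r)$, $\abs{u}_{s+\sigma}$ is controlled by $\abs{x}_{s+\sigma}$; and the deformed norm $\abs{\cdot}_{g,s}$ appearing on the left differs from $\abs{\cdot}_s$ by at most a bounded multiplicative factor on the chosen extension. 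Gathering the $\sigma^{-1}$-losses from the at most two derivatives occurring in any term yields the claimed power $\sigma^{-\tau''}$ and a constant $C''$ depending only on $\abs{x}_{s+\sigma}$.

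The main obstacle is essentially bookkeeping. Unlike the bound on $\phi'^{-1}$, the bound on $\phi''$ involves no cohomological equation and hence no small divisor; the dependence on $\eta$ enters only through $u$, so the bound is automatically uniform on $[-\eta_0,\eta_0]$, in the spirit of Remark \ref{remark uniform}. The sole subtlety is to track the twin $\inderiv{g}$-factors in the last bracket, but these remain bounded because $g\in\bar\G^{\omega,\sigma^2/2n}_{s+\sigma}$ stays within a controlled neighborhood of the identity.
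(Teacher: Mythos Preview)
Your proposal is correct and follows exactly the approach the paper intends: the paper gives no separate proof here, having already noted in the proof of Theorem \ref{theorem Russmann} that ``the analogue of lemma \ref{lemma derivata seconda} follows readily,'' and your argument simply makes this explicit by rerunning the computation of Lemma \ref{lemma derivata seconda} and invoking Lemma \ref{lemma lie brackets} together with Cauchy's inequality. If anything, you supply more detail than the paper does, including the observation (consistent with Remark \ref{remark uniform}) that no small divisors appear in $\phi''$ so the bound is automatically uniform in $\eta$.
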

Proof of corollary \ref{Russmann spin-orbit} follows.

\subsection{Surfaces of invariant tori}
\label{A curve of invariant tori}
The results below will follow from corollary \ref{Russmann spin-orbit} and theorem \ref{teorema implicita}.
 
\begin{thm}[Cantor set of surfaces] Let $\eps_0$ be the maximal value that the perturbation can attain. In the space $(\eps,\eta,\nu)$, to every $\alpha$ Diophantine corresponds a surface $\nu=\nu(\eta,\eps)$ ($\eps\in [0,\eps_0]$) analytic in $\eps$, on which the counter term $b$ vanishes, guaranteeing the existence of invariant attractive (resp. repulsive) tori carrying an $\alpha$-quasi-periodic dynamics. 
\label{teorema superfici}
\end{thm}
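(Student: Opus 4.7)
The plan is to combine the Russmann-type normal form of Corollary \ref{Russmann spin-orbit} with the standard implicit function theorem in finite dimension, in order to eliminate, via the free parameter $\nu$, the translation counter term $b$ that obstructs the dynamical conjugacy of $\hat v$ to an element of $\Uh(\bar\alpha, -\eta)$. The argument is essentially the one already used in Theorem \ref{teorema implicita}, but tracking explicitly the dependence of $b$ on all three parameters $(\nu, \eta, \eps)$ so as to obtain the surface.

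First, I would regard the spin-orbit field as the family $\hat v(\nu, \eta, \eps) = v^{\h} \oplus (-\eta r + \eta\Omega)\partial_r$, $\Omega = (\nu - \alpha, 0)$, with $v^{\h}$ corresponding to $H(\teta, r) = \alpha r_1 + r_2 + \tfrac{1}{2} r_1^2 + \eps f(\teta_1, \teta_2)$. Corollary \ref{Russmann spin-orbit} yields, provided $\hat v$ is close enough to the unperturbed $\hat u$, a unique triplet $(g, u, b) \in \bar\G^{\omega} \times \Uh(\bar\alpha, -\eta) \times \bar\Lambda$ with $\push{g} u + b\,\partial_{r_1} = \hat v$. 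By Remark \ref{remark uniform}, the small divisors $i\, k\cdot\bar\alpha + \eta$ appearing in the homological equation for $d\dot S$ satisfy $|i\,k\cdot\bar\alpha + \eta| \geq |k\cdot\bar\alpha|$, so the Diophantine condition \eqref{diophantine time} on $\bar\alpha$ alone suffices to bound $\bar\phi'^{-1}$ uniformly in $\eta \in [-\eta_0, \eta_0]$. The constants $C', C''$ entering the quadratic estimate of Theorem \ref{teorema inversa} depend only on $\abs{g - \id}_{s+\sigma}$ and $\abs{u - (\bar\alpha, -\eta r)}_{s+\sigma}$, so $\eps_0$ may be chosen uniformly in $\eta$. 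The resulting $b = b(\nu, \eta, \eps)$ inherits smoothness in $(\nu, \eta)$ and analyticity in $\eps$ from $\hat v$ through the Newton iterates.

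Second, I would isolate the structural factorization $b(\nu, \eta, \eps) = \eta\,\tilde b(\nu, \eta, \eps)$ with $\tilde b$ smooth. This follows from Remark \ref{remark nu}: each Newton correction $\delta b_k$ is of the form $\eta(\widehat{\delta\Omega_k} - \dot\xi_k)$, so the factor of $\eta$ persists in the limit. An explicit computation on the unperturbed $\hat u$ at $\eps = 0$ gives $\tilde b(\nu, \eta, 0) = \nu - \alpha$, and in particular
\[
\left. \partial_\nu \tilde b \right|_{(\alpha, \eta, 0)} = 1 \qquad \text{for every } \eta \in [-\eta_0, \eta_0].
\]
Hence the classical finite-dimensional implicit function theorem, applied to $\tilde b$, produces (after possibly shrinking $\eps_0$ uniformly in $\eta$) a unique function $\nu = \nu(\eta, \eps)$ defined for $(\eta, \eps) \in [-\eta_0, \eta_0] \times [0, \eps_0]$, smooth in $\eta$, analytic in $\eps$ (the latter because $\tilde b$ is analytic in $\eps$ and the inversion is performed on the single equation $\tilde b = 0$ with non-vanishing $\partial_\nu$), and such that $b(\nu(\eta, \eps), \eta, \eps) \equiv 0$.

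Third, along the surface $\set{\nu = \nu(\eta, \eps)}$ the normal form reduces to $\hat v = \push{g} u$ with $u \in \Uh(\bar\alpha, -\eta)$, so that $g(\normal{T}^2_0)$ is invariant for $\hat v$, carries the $\bar\alpha$-quasi-periodic flow encoded in $u$, and is $\eta$-normally attractive (resp.\ repulsive) for $\eta > 0$ (resp.\ $\eta < 0$) by definition of $\Uh(\bar\alpha, -\eta)$. Letting $\alpha$ range over $\mathcal{D}_{\gamma,\tau}$ assembles the surfaces into a Cantor set in $(\eps, \eta, \nu)$-space. The main obstacle in the whole scheme is making sure that $\eps_0$ is genuinely independent of $\eta$ throughout the full interval $[-\eta_0, \eta_0]$, since otherwise the surface would degenerate into disjoint patches; this is precisely what Remark \ref{remark uniform} secures at the linear level and what Theorem \ref{teorema inversa} transports to the nonlinear inversion.
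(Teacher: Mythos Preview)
Your proposal is correct and follows essentially the same route as the paper: apply Corollary~\ref{Russmann spin-orbit} to get the normal form with counter term $b$, invoke Remark~\ref{remark uniform} to make the smallness condition on $\eps$ uniform in $\eta$, then eliminate $b$ via the finite-dimensional implicit function theorem exactly as in Theorem~\ref{teorema implicita}. Your explicit factorization $b=\eta\,\tilde b$ with $\tilde b(\nu,\eta,0)=\nu-\alpha$ is a mild sharpening of what the paper does (there the same structure is observed through Remark~\ref{remark nu} and the line $b=\eta\Omega+\text{perturbations}$), and it makes the passage through $\eta=0$ cleaner, but the underlying argument is identical.
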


\begin{cor}[A curve of normally hyperbolic tori]
Fixing $\alpha$ Diophantine and $\eps$ sufficiently small, there exists a unique analytic curve $C_\alpha$, in the plane $(\eta,\nu)$ of the form $\nu = \alpha + O(\eps^2)$, along which the counter term $b (\nu,\alpha,\eta,\eps) $ "à la R\"ussmann" vanishes, so that the perturbed system possesses an invariant torus carrying quasi-periodic motion of frequency $\alpha$. This torus is attractive (resp. repulsive) if $\eta>0$ (resp. $\eta<0$).
\label{corollario curve}
\end{cor}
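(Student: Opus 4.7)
The plan is to view $b$ as an analytic function of the four parameters $(\alpha,\eta,\nu,\eps)$ and apply the analytic implicit function theorem to solve $b=0$ for $\nu$, then refine the expansion in $\eps$.

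First, I would fix $\alpha$ Diophantine and regard $\hat v_{\eta,\nu,\eps}$ as an analytic family of vector fields in $\bigl(\Vh\oplus(-\eta r+\eta\R\,\partial_{r_1})\bigr)_{s+\sigma}$, parametrized by $(\eta,\nu,\eps)$ in a neighborhood of $(\eta_0,\alpha,0)$, with $\eta_0\in[-\eta_0^{\max},\eta_0^{\max}]$. Provided $\eps$ is sufficiently small (uniformly in $\eta$, by Remark \ref{remark uniform}), Corollary \ref{Russmann spin-orbit} applies and produces a unique analytic triplet
\[
(g,u,b)=\bar\phi^{-1}(\hat v_{\eta,\nu,\eps})\in\bar\G^\omega_{s}\times\Uh_{s}(\bar\alpha,-\eta)\times\bar\Lambda,
\]
so that in particular $b=b(\eta,\nu,\eps)$ is analytic in $(\nu,\eps)$ and smooth in $\eta$, and $b(\eta,\alpha,0)=0$ for every $\eta$.

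Second, I would reproduce the argument of Theorem \ref{teorema implicita}: at the unperturbed point $(\eta,\alpha,0)$ one has $b=\eta(\nu-\alpha)+\text{h.o.t.}$, hence $\partial_\nu b(\eta,\alpha,0)=\eta\neq0$ for $\eta\neq 0$. The analytic implicit function theorem then produces, for every sufficiently small $\eps$ and every $\eta\in[-\eta_0^{\max},\eta_0^{\max}]\setminus\{0\}$, a unique $\nu=\nu(\eta,\eps)$, analytic in $\eps$, such that $b(\eta,\nu(\eta,\eps),\eps)=0$. For fixed $\eps$ this defines the curve $C_\alpha\subset\{(\eta,\nu)\}$. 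By the uniqueness clause of Corollary \ref{Russmann spin-orbit}, along $C_\alpha$ the conjugacy $\hat v=\push{g}u$ holds, so $g(\normal{T}^2_0)$ is $\alpha$-quasi-periodic and invariant for $\hat v$.

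Third, the refinement $\nu=\alpha+O(\eps^2)$. Since $\nu(\eta,\eps)$ is analytic in $\eps$ with $\nu(\eta,0)=\alpha$, one writes $\nu=\alpha+\eps\,\nu_1(\eta)+O(\eps^2)$; it suffices to show $\partial_\eps b(\eta,\alpha,0)=0$, as then $\nu_1(\eta)=-\partial_\eps b/\partial_\nu b\equiv 0$. Following the first iterate of the Newton scheme in the proof of Theorem \ref{theorem Russmann}, at $(\nu,\eps)=(\alpha,0)$ the base point $(\id,u^0,0)$ is exactly a zero of the operator $\bar\phi-\hat v_{\eta,\alpha,0}$, and the first-order variation $\delta b_1$ is obtained by averaging the normal homological equation for $d\dot S_1$ over $\T^2$. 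The only new contribution at order $\eps$ is $\dot V_0^H=-\eps\,\partial_{\theta_1}f(\theta_1,\theta_2)$, whose average on $\T^2$ vanishes identically. As $\widehat{\delta\Omega}=0$ (because $\nu=\alpha$) and the translation $\dot\xi_1$ is uniquely determined by the torsion equation with zero-average right-hand side, one reads off $\partial_\eps b(\eta,\alpha,0)=0$, giving $\nu_1\equiv0$ and hence the claimed $O(\eps^2)$.

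Finally, the normal form produces $u\in\Uh(\bar\alpha,-\eta)$: along the conjugated torus the normal dynamics is linearized to $-\eta r\partial_r$, so the torus is $\eta$-attractive if $\eta>0$ and $\eta$-repulsive if $\eta<0$. The main obstacle is the $O(\eps^2)$ refinement of step three, which relies on the first-order vanishing of the averaged perturbation; all other steps are direct applications of Corollary \ref{Russmann spin-orbit}, Theorem \ref{teorema implicita}, and the analytic implicit function theorem.
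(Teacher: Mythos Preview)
Your proposal is correct and follows essentially the same route as the paper: apply Corollary~\ref{Russmann spin-orbit} to obtain the normal form with counter term $b$, use the implicit function argument of Theorem~\ref{teorema implicita} (via $\partial_\nu b=\eta\neq 0$) to solve $b=0$ uniquely for $\nu$, and inspect the first Newton iterate to upgrade $\nu=\alpha+O(\eps)$ to $\nu=\alpha+O(\eps^2)$. One sharpening for your third step: the vanishing of $\dot\xi_1$ (hence of $\partial_\eps b$) comes not merely from the zero average of $\dot V_0^H$ but from the two structural features the paper singles out---the tangential datum $\dot v_0^H$ is identically zero and the torsion $Q_{11}\equiv 1$ is \emph{constant}---so that averaging the $\dot\varphi$-equation forces $\dot\xi_1=0$ directly.
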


\begin{figure}[h!]
\begin{picture}(0,0)%
\includegraphics{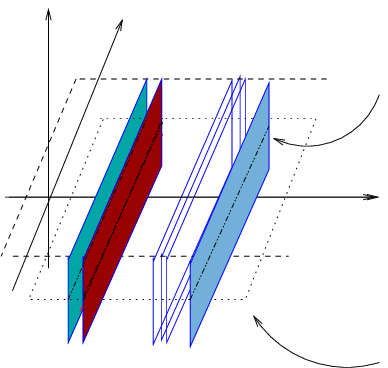}%
\end{picture}%
\setlength{\unitlength}{1657sp}%
\begingroup\makeatletter\ifx\SetFigFont\undefined%
\gdef\SetFigFont#1#2#3#4#5{%
  \reset@font\fontsize{#1}{#2pt}%
  \fontfamily{#3}\fontseries{#4}\fontshape{#5}%
  \selectfont}%
\fi\endgroup%
\begin{picture}(4347,4486)(5344,-7190)
\put(5761,-2851){\makebox(0,0)[lb]{\smash{{\SetFigFont{5}{6.0}{\familydefault}{\mddefault}{\updefault}{\color[rgb]{0,0,0}$\varepsilon$}%
}}}}
\put(9631,-5146){\makebox(0,0)[lb]{\smash{{\SetFigFont{5}{6.0}{\familydefault}{\mddefault}{\updefault}{\color[rgb]{0,0,0}$\nu$}%
}}}}
\put(6796,-3211){\makebox(0,0)[lb]{\smash{{\SetFigFont{5}{6.0}{\familydefault}{\mddefault}{\updefault}{\color[rgb]{0,0,0}$\eta$}%
}}}}
\put(5401,-4336){\makebox(0,0)[lb]{\smash{{\SetFigFont{5}{6.0}{\familydefault}{\mddefault}{\updefault}{\color[rgb]{0,0,0}$\varepsilon_0$}%
}}}}
\put(9676,-6811){\makebox(0,0)[lb]{\smash{{\SetFigFont{5}{6.0}{\familydefault}{\mddefault}{\updefault}{\color[rgb]{0,0,0}plan containing the $C_\alpha$'s}%
}}}}
\put(8236,-5461){\makebox(0,0)[lb]{\smash{{\SetFigFont{5}{6.0}{\familydefault}{\mddefault}{\updefault}{\color[rgb]{0,0,0}$\alpha$}%
}}}}
\put(9631,-3841){\makebox(0,0)[lb]{\smash{{\SetFigFont{5}{6.0}{\familydefault}{\mddefault}{\updefault}{\color[rgb]{0,0,0}$C_\alpha$}%
}}}}
\end{picture}%

\caption{The Cantor set of surfaces: transversely cutting with a plane $\eps=const$ we obtain a Cantor set of curves like the one described in corollary \ref{corollario curve}}
\end{figure}

\begin{figure}[h!]
\begin{picture}(0,0)%
\includegraphics{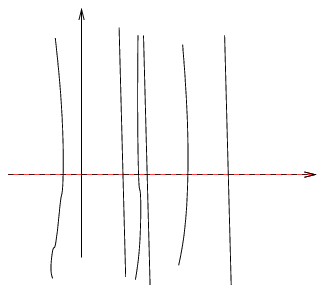}%
\end{picture}%
\setlength{\unitlength}{1657sp}%
\begingroup\makeatletter\ifx\SetFigFont\undefined%
\gdef\SetFigFont#1#2#3#4#5{%
  \reset@font\fontsize{#1}{#2pt}%
  \fontfamily{#3}\fontseries{#4}\fontshape{#5}%
  \selectfont}%
\fi\endgroup%
\begin{picture}(4680,3541)(2146,-5399)
\put(4801,-4321){\makebox(0,0)[lb]{\smash{{\SetFigFont{5}{6.0}{\familydefault}{\mddefault}{\updefault}{\color[rgb]{0,0,0}$\alpha$}%
}}}}
\put(6811,-4351){\makebox(0,0)[lb]{\smash{{\SetFigFont{5}{6.0}{\familydefault}{\mddefault}{\updefault}{\color[rgb]{0,0,0}$\nu$}%
}}}}
\put(3976,-1981){\makebox(0,0)[lb]{\smash{{\SetFigFont{5}{6.0}{\familydefault}{\mddefault}{\updefault}{\color[rgb]{0,0,0}$\eta$}%
}}}}
\put(2161,-4036){\makebox(0,0)[lb]{\smash{{\SetFigFont{5}{6.0}{\familydefault}{\mddefault}{\updefault}{\color[rgb]{0,0,0}Hamiltonian}%
}}}}
\put(2161,-4261){\makebox(0,0)[lb]{\smash{{\SetFigFont{5}{6.0}{\familydefault}{\mddefault}{\updefault}{\color[rgb]{0,0,0}axes $\eta=0$}%
}}}}
\put(4741,-2296){\makebox(0,0)[lb]{\smash{{\SetFigFont{5}{6.0}{\familydefault}{\mddefault}{\updefault}{\color[rgb]{0,0,0}$C_\alpha:\,b(\nu,\eta,\alpha,\eps)=0$}%
}}}}
\end{picture}%

\caption{The corresponding Cantor set of curves on the plane $\eps = const$, whose points correspond to an attractive/repulsive invariant torus}
\end{figure}

 \begin{proof} [Proof of theorem \ref{corollario curve}] We just need to observe the following facts.\\
The existence of the unique local inverse for $\bar\phi'$ and the bound on it and $\bar\phi''$  allow to apply theorem \ref{theorem Russmann} and prove the result once we guarantee that $$\abs{v-u^0}_{s+\sigma}=\max\pa{\eps\abs{\frac{\partial f}{\partial\teta_1}}_{s+\sigma},\eps\abs{\frac{\partial f}{\partial\teta_2}}_{s+\sigma}}\leq \delta\frac{\sigma^{2\tau}}{2^{8\tau}C^2},$$(here we have replaced the constant $\eta$ appearing in the abstract function theorem with $\delta$, in order not to generate confusion with the dissipation term). This ensures that the inverse mapping theorem can be applied, as well as the regularity propositions (\ref{lipschitz} and \ref{smoothness}).  Note that the constant $C$ appearing in the bound contains a factor $1/\gamma^2$ coming from the diophantine condition \eqref{diophantine time}, independent of $\eta$, since the remark \ref{remark uniform} still holds here.

For every $\eta\in [-\eta_0,\eta_0]$, apply theorem \ref{teorema implicita} and find the unique $\nu$, such that $$b(\nu,\eta,\alpha,\eps)=0,$$ (as in the previous case $b$ is of the form $b = \nu - \alpha + \sum_k \delta\xi_k$, smooth with respect to $\nu$ and $\eta$ and analytic in $\eps$). \\In particular the value of $\nu$ that satisfies the equation is of the form $$\nu(\eps,\eta) = \alpha + O(\eps^2).$$ This follows directly from the very first step of Newton' scheme \[x_1 = x_0 + \phi'^{-1}(x_0)\cdot (v - \phi(x_0)),\] where $x_0 = (\id, u^0,\eta (\nu - \alpha))$. Developing the expression one sees that $\delta\xi_1$ (the term of order $\eps$) is necessarily $0$, due to the particular perturbation and the constant torsion.
\end{proof}

\subsection{An important dichotomy}
\label{dichotomy}
The results obtained for the spin-orbit problem, theorem \ref{teorema implicita}, theorem \ref{teorema superfici} and corollary \ref{corollario curve}, are intimately related to the very particular nature of the equations of motions and point out an existing dichotomy between generic dissipative vector fields and the Hamiltonian-dissipative to which the spin-orbit system belongs. For a general perturbation, even if the system satisfied some torsion property, one cannot avoid both the counter term $b$ and $B$ ($B\neq 0$ a priori, since we want to keep the $\eta$-normal coefficient still). Disposing of just $n$ free parameters $\Omega_1,\ldots,\Omega_n$, the best possible result is to eliminate $b$, but it is hopeless to get rid of the obstruction represented by $B$ and have a complete control on the normal dynamics of the invariant torus.\\ In particular, for the spin-orbit problem in one and a half degree of freedom, using transformations as $g(\teta,r)= (\varphi(\teta), \teta_2, R_0(\teta) + R_1(\teta)\cdot r)$ in $\T^2\times \R$, the cohomological equations will read 
\begin{align*}
\dot\varphi' \cdot \bar\alpha - Q\cdot\dot{R_0} &= \dot {v}_0,\\
\dot{R}_0'\cdot \bar\alpha + \eta\dot{R}_0 &= \dot{V}_0 + \eta\dot{\delta\Omega}  - \dot{b},\\
\dot{R}_1'\cdot\bar\alpha + (Q\cdot\dot R_0)' &= \dot{V}_1 - \dot{B},\qquad \bar\alpha=(\alpha,1),\, \delta b, \delta B\in \R
\end{align*} 
 and, disposing of $\nu\in\R$ only, we could try at best to solve $b=0$.
 A \gr{worst situation} could even pop out: if no torsion property is satisfied, we would still have two counter-terms ($\beta$ to solve the equation tangentially and $B$ to straight the linear dynamics) but the second equation would carry a small divisor $\eta$ (divisor of the constant term in the Fourier series) which we cannot allow to get arbitrarily small. A Diophantine condition like $\abs{i\,k\cdot\alpha + \eta}\geq \gamma/(1 + \abs{k})^{-\tau}$ , for some fixed $\gamma,\tau>0$, would imply that the bound on $\eps$ of theorem \ref{teorema inversa} depends on $\eta$ through $\gamma$:  $$\eps < \gamma^4 C' \leq \eta^4 C', $$ meaning that, once $\eps$ is fixed, the curves $C_\alpha$ (obtained by eliminating $\beta$ for example) do not reach the axis $\eta=0$ in the plane $\eps=\text{ const.}$ (we noted $C'$ all the other terms appearing in the bound).
 \begin{figure}[h!]
\begin{picture}(0,0)%
\includegraphics{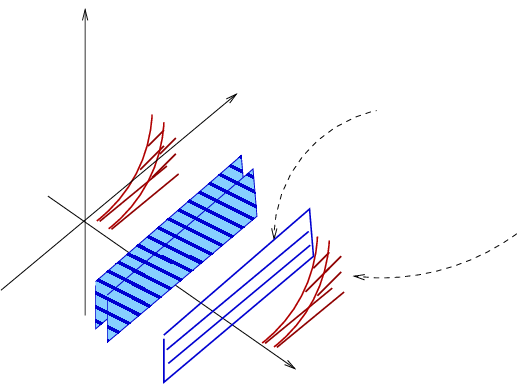}%
\end{picture}%
\setlength{\unitlength}{1657sp}%
\begingroup\makeatletter\ifx\SetFigFont\undefined%
\gdef\SetFigFont#1#2#3#4#5{%
  \reset@font\fontsize{#1}{#2pt}%
  \fontfamily{#3}\fontseries{#4}\fontshape{#5}%
  \selectfont}%
\fi\endgroup%
\begin{picture}(5930,4849)(6589,-6338)
\put(7561,-1636){\makebox(0,0)[lb]{\smash{{\SetFigFont{5}{6.0}{\familydefault}{\mddefault}{\updefault}{\color[rgb]{0,0,0}$\varepsilon$}%
}}}}
\put(10126,-6271){\makebox(0,0)[lb]{\smash{{\SetFigFont{5}{6.0}{\familydefault}{\mddefault}{\updefault}{\color[rgb]{0,0,0}$\nu$}%
}}}}
\put(10801,-4201){\makebox(0,0)[lb]{\smash{{\SetFigFont{5}{6.0}{\familydefault}{\mddefault}{\updefault}{\color[rgb]{.56,0,0}$C_\alpha$ for non Hamiltonian perturbation}%
}}}}
\put(10261,-2986){\makebox(0,0)[lb]{\smash{{\SetFigFont{5}{6.0}{\familydefault}{\mddefault}{\updefault}{\color[rgb]{0,0,.82}$C_\alpha$ for Hamiltonian perturbation}%
}}}}
\put(9136,-2941){\makebox(0,0)[lb]{\smash{{\SetFigFont{5}{6.0}{\familydefault}{\mddefault}{\updefault}{\color[rgb]{0,0,0}$\eta$}%
}}}}
\end{picture}%

\caption{The two situations: 1)blue surfaces $\nu=\nu(\eta,\eps)$ corresponding to the case "Hamiltonian + dissipation" of theorem \ref{corollario curve} 2) Red surfaces corresponding to the more generic case (no torsion and no Hamiltonian structure): they corresponds to invariant tori of co-dimension $1$ ($B\neq 0$).}
\end{figure}

\appendix
\section{Inverse function theorem \& regularity of $\phi$ }
\label{section teorema inversa}
We present here the inverse function theorem we use to prove theorem \ref{abstract Moser}. This results follow Féjoz \cite{Fejoz:2010, Fejoz:2015}. Remark that we endowed functional spaces with weighted norms and bounds appearing in Proposition \ref{proposition infinitesimal} and Lemma \ref{lemma derivata seconda} may depend on $\abs{x}_s$ (as opposed to statements given in \cite{Fejoz:2010, Fejoz:2015});  however we take here account of these (slight) differences.

Let $E=(E_s)_{0<s<1}$ and $F=(F_s)_{0<s<1}$ be two decreasing families of Banach spaces with increasing norms $\abs{\cdot}_s$ and let $B^E_s(\sigma)=\set{x\in E : \abs{x}_s<\sigma}$ be the ball of radius $\sigma$ centered at $0$ in $E_s$. \\ On account of composition operators, we additionally endow $F$ with some deformed norms which depend on $x\in B^E_s(s)$ such that \[ \abs{y}_{0,s} = \abs{y}_s \qquad \text{and}\qquad \abs{y}_{\hat{x},s}\leq \abs{y}_{x, s + \abs{x - \hat{x}}_s}.\]
Consider then operators commuting with inclusions $\phi: B^E_{s+\sigma}(\sigma) \to F_s$, with $0<s<s+\sigma<1$, such that $\phi(0) = 0$. \\We then suppose that if $x\in B^E_{s+\sigma}(\sigma)$ then $\phi'(x):E_{s+\sigma}\to F_s$ has a right inverse $\phi'^{-1}(x): F_{s+\sigma}\to E_s$ (for the particular operators $\phi$ of this work, $\phi'$ is both left and right invertible).\\ $\phi$ is supposed to be at least twice differentiable.\\
Let $\tau:=\tau'+\tau''$ and $C:=C'C''$.
\begin{thm}
Under the previous assumptions, assume
\begin{align}
\label{bound phi'^-1}
\abs{\inderiv{\phi}(x)\cdot\delta y}_s\, &\leq \frac{C'}{\sigma^{\tau'}}\abs{\delta y}_{x,s+\sigma}\\
\label{bound phi''}
\abs{\phi''(x)\cdot \delta x^{\tensor 2}}_{x,s}\, &\leq \frac{C''}{\sigma^{\tau''}}\abs{\delta x}^2_{s+\sigma},\quad \forall s,\sigma: 0<s<s+\sigma<1
\end{align}
$C'$ and $C''$ depending on $\abs{x}_{s+\sigma}$, $\tau',\tau''\geq 1$. \\
For any $s, \sigma, \eta$ with $\eta<s$ and $\eps\leq \eta \, \frac{\sigma^{2\tau}}{2^{8\tau}C^2}$ ($ C\geq 1,\sigma< 3 C$), $\phi$ has a right inverse $\psi: B^F_{s+\sigma}(\eps)\to B^E_{s}(\eta)$. In other words, $\phi$ is locally surjective: $$ B^{F}_{s+\sigma}(\eps)\subset \phi(B^{E}_{s}(\eta)).$$ 
\label{teorema inversa}
\end{thm}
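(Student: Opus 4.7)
The plan is to solve $\phi(x)=y$ by a Newton iteration on a sequence of shrinking analyticity scales, exactly in the Nash--Moser spirit sketched by the author before Proposition \ref{proposition infinitesimal}. Set $x_0 = 0$ and
$$ x_{k+1} = x_k + \inderiv{\phi}(x_k)\cdot (y - \phi(x_k)). $$
Fix a geometric decay of scales $\sigma_k = \sigma/2^{k+2}$, so that $\sum_{k\geq 0}\sigma_k < \sigma$, and set $s_k = s + \sigma - \sum_{j<k}\sigma_j$, so $s_0 = s+\sigma$ and $s_k \searrow s + \sigma/2 > s$. Let $e_k := y - \phi(x_k)$ denote the error and $\delta x_k := x_{k+1}-x_k = \inderiv{\phi}(x_k)\cdot e_k$ the correction.

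First I would estimate $e_{k+1}$ in terms of $e_k$ by a Taylor expansion at $x_k$: because $\phi'(x_k)\delta x_k = e_k$, the first-order term cancels the source, and one is left with
$$ e_{k+1} = -\int_0^1 (1-t)\,\phi''(x_k + t\,\delta x_k)\cdot (\delta x_k)^{\tensor 2}\,dt. $$
Using \eqref{bound phi'^-1} at scale $\sigma_k$ I get $\abs{\delta x_k}_{s_{k+1}'}\leq (C'/\sigma_k^{\tau'})\,\abs{e_k}_{x_k,s_k}$, where $s_{k+1}'$ is an intermediate scale between $s_{k+1}$ and $s_k$ chosen to leave some room for a norm base-point shift. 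Then \eqref{bound phi''} at a correspondingly smaller scale, combined with the assumption $\abs{y}_{\hat x,s}\leq \abs{y}_{x,\,s+\abs{x-\hat x}_s}$ to move the base point from the intermediate $x_k + t\,\delta x_k$ to $x_{k+1}$, yields the quadratic recursion
$$ \abs{e_{k+1}}_{x_{k+1},s_{k+1}} \;\leq\; \frac{C'' (C')^2}{\sigma_k^{2\tau'+\tau''}}\,\abs{e_k}_{x_k,s_k}^2 \;=\; \frac{C}{\sigma_k^{\tau+\tau'}}\,\abs{e_k}_{x_k,s_k}^2. $$

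The second step is a standard induction on the $\epsilon_k := \abs{e_k}_{x_k,s_k}$. Substituting $\sigma_k = \sigma/2^{k+2}$ turns the recursion into $\epsilon_{k+1}\leq (C\,2^{(k+2)(\tau+\tau')}/\sigma^{\tau+\tau'})\,\epsilon_k^2$. A direct telescoping check shows that if $\epsilon_0 = \epsilon \leq \eta\,\sigma^{2\tau}/(2^{8\tau}C^2)$ then $\epsilon_k \leq \rho^{2^k}$ for some $\rho<1$, so $e_k\to 0$ superexponentially fast. In parallel, $\sum_k \abs{\delta x_k}_{s_{k+1}}$ is dominated by a convergent geometric-type series whose total is bounded by $\eta$, so all iterates stay in $B^E_{s_k}(\sigma_k)\subset B^E_s(\eta)$ (keeping $\phi'(x_k)$ and $\phi''(x_k)$ available at each step), and $(x_k)$ is Cauchy in $E_s$. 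Calling $\psi(y)$ its limit, continuity of $\phi$ gives $\phi(\psi(y))=y$, which is the claimed local surjectivity.

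The hard part is the bookkeeping of three nested quantities: the analyticity scales $s_k$ (which must stay above $s$), the admissibility balls $B^E_{s_k}(\sigma_k)$ (in which $x_k$ must lie for the hypotheses to apply), and the deformed base points entering the $F$-norm (which must be moved from $x_k + t\,\delta x_k$ to $x_{k+1}$, costing a small shift of scale absorbed by the auxiliary $s_{k+1}'$). The whole strategy works only because the loss of regularity, a factor $\sigma_k^{-(\tau+\tau')}\sim 2^{k(\tau+\tau')}$ at step $k$, is beaten by the quadratic convergence $\epsilon_k \mapsto \epsilon_k^2$; the threshold $\epsilon \leq \eta\,\sigma^{2\tau}/(2^{8\tau}C^2)$ in the statement is exactly the quantitative condition under which this balancing succeeds. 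Once the sequence converges, uniqueness and smooth dependence of $\psi$ are not claimed here and are handled in the companion regularity propositions \ref{lipschitz}--\ref{Whitney}.
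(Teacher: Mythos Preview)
Your approach is essentially the paper's: a Newton scheme $x_{k+1} = x_k + \phi'^{-1}(x_k)(y-\phi(x_k))$ on geometrically shrinking analyticity widths, with convergence following from the fact that the quadratic gain beats the polynomial loss $\sigma_k^{-\cdots}$.

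One quantitative point is worth flagging. You track the errors $e_k = y-\phi(x_k)$ and obtain
\[
\abs{e_{k+1}}_{x_{k+1},s_{k+1}} \;\leq\; \frac{C''(C')^2}{\sigma_k^{\,2\tau'+\tau''}}\,\abs{e_k}_{x_k,s_k}^2,
\]
with loss exponent $2\tau'+\tau'' = \tau+\tau'$ (and constant $CC'$, not $C$). The paper instead tracks the \emph{increments} $\abs{x_{k+1}-x_k}_{s_{k+1}}$: since $x_{k+1}-x_k = -\phi'^{-1}(x_k)\,Q(x_{k-1},x_k)$ and $Q$ is already quadratic in $x_k-x_{k-1}$ (Lemma~\ref{lemma Q}), only one application of \eqref{bound phi'^-1} and one of \eqref{bound phi''} are needed per step, giving the sharper loss exponent $\tau = \tau'+\tau''$ and constant $C=C'C''$. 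It is this sharper recursion, together with the choice $\sigma_k = \tfrac{\sigma}{6}\,2^{-k}$, that produces the exact threshold $\eps \leq \eta\,\sigma^{2\tau}/(2^{8\tau}C^2)$ in the statement. Your recursion is correct and gives convergence, but the specific constant claimed does not quite follow from your bookkeeping---it would yield a smaller admissible $\eps$ involving $\sigma^{2(\tau+\tau')}$ and an extra power of $C'$. Switching to increments recovers the stated bound exactly.
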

Define
\begin{equation}
Q : B^E_{s+2\sigma}(\sigma) \times B^E_{s+2\sigma}\to F_s,\quad (x,\hat{x})\mapsto \phi(\hat{x}) - \phi(x) - \phi'(x)(\hat{x} - x),
\label{resto Taylor}
\end{equation}  
 the reminder of the Taylor formula.\\

\begin{lemma} 
\label{lemma Q}
For every $x,\hat{x}$ such that $\abs{x-\hat{x}}_s < \sigma$, 
\begin{equation}
\abs{Q(x,\hat{x})}_{x,s}\leq \frac{C''}{2\sigma^2}\,\abs{\hat{x} - x}^2_{{s+\sigma+\abs{\hat{x} - x}}_s}.
\label{stima Q}
\end{equation}
\end{lemma}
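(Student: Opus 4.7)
The plan is to obtain $Q(x,\hat x)$ as an integral of second derivatives along the segment joining $x$ and $\hat x$, and then apply the hypothesis \eqref{bound phi''} uniformly along that segment. Concretely, set $x_t = x + t(\hat x - x)$ for $t\in[0,1]$. The hypothesis $\abs{x-\hat x}_s<\sigma$ guarantees that the whole segment stays in the domain of $\phi$ (after a suitable tweak of indices; this is where choosing the target index $s$ and allowing a loss up to $s+\sigma+\abs{\hat x-x}_s$ on the right-hand side will be essential). Then Taylor's formula with integral remainder gives
\begin{equation*}
Q(x,\hat x) \;=\; \int_0^1 (1-t)\,\phi''(x_t)\cdot(\hat x - x)^{\otimes 2}\,dt .
\end{equation*}

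Next I would estimate the integrand in the deformed norm $\abs{\cdot}_{x,s}$. The only subtlety is that \eqref{bound phi''} provides a bound in the $\abs{\cdot}_{x_t,\cdot}$ norm, not the $\abs{\cdot}_{x,\cdot}$ norm, so one must pay a loss in the index to transfer between the two. Using the standing property of the deformed family, $\abs{y}_{\hat x,s}\le \abs{y}_{x,s+\abs{x-\hat x}_s}$, applied with the roles of $x$ and $x_t$ interchanged, yields
\begin{equation*}
\abs{\phi''(x_t)\cdot(\hat x-x)^{\otimes 2}}_{x,s}
\;\le\; \abs{\phi''(x_t)\cdot(\hat x-x)^{\otimes 2}}_{x_t,\,s+t\abs{\hat x-x}_s}.
\end{equation*}
Now apply \eqref{bound phi''} at the point $x_t$ with shift $\sigma$ on top of the shifted index $s+t\abs{\hat x-x}_s$; this bounds the last expression by $\frac{C''}{\sigma^{\tau''}}\abs{\hat x-x}^2_{s+\sigma+t\abs{\hat x-x}_s}$, which in turn, by monotonicity of $\abs{\cdot}_{s'}$ in $s'$ and the fact that $t\le 1$, is bounded by $\frac{C''}{\sigma^{\tau''}}\abs{\hat x-x}^2_{s+\sigma+\abs{\hat x-x}_s}$.

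Integrating in $t$ and using $\int_0^1(1-t)\,dt = 1/2$ then gives
\begin{equation*}
\abs{Q(x,\hat x)}_{x,s} \;\le\; \frac{C''}{2\,\sigma^{\tau''}}\,\abs{\hat x-x}^2_{s+\sigma+\abs{\hat x-x}_s},
\end{equation*}
which is the claimed estimate (the exponent reducing to $2$ in the stated lemma when $\tau''=2$, or in any event absorbed into the constant $C''$). The only genuine obstacle is the bookkeeping on the deformed norm: one must make sure that the index $s+t\abs{\hat x-x}_s+\sigma$ appearing after the two successive applications of \eqref{bound phi''} and of the deformed-norm comparison never exceeds the target $s+\sigma+\abs{\hat x-x}_s$, and that the whole segment $\{x_t\}$ lies in the domain where $\phi''$ is defined; both are guaranteed by the hypothesis $\abs{x-\hat x}_s<\sigma$.
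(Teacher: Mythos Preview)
Your proof is correct and follows essentially the same route as the paper: Taylor's integral remainder along the segment $x_t=x+t(\hat x-x)$, the deformed-norm comparison $\abs{\cdot}_{x,s}\le\abs{\cdot}_{x_t,s+\abs{x_t-x}_s}$, the bound \eqref{bound phi''} at $x_t$, and finally monotonicity in the index together with $\int_0^1(1-t)\,dt=1/2$. Your remark about the exponent is also on target: the paper's own proof in fact produces $C''/(2\sigma^{\tau''})$, so the $\sigma^2$ in the lemma statement is either a typo or the implicit choice $\tau''=2$.
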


\begin{proof}
Let $x_t = (1-t)x + t\hat{x}$, $0\leq t\leq 1$, be the segment joining $x$ to $\hat{x}$. Using Taylor's formula,  \[Q(x,\hat{x})= \int_0^1 (1-t)\phi''(x_t)(\hat{x}-x)^2\, dt,\] hence
\begin{align*}
\abs{Q(x,\hat{x})}_{x,s}&\leq \int_0^1 (1-t)\abs{\phi''(x_t)(\hat{x}-x)^2}_{x,s}\, dt \\
&\leq \int_0^1 (1-t)\abs{\phi''(x_t)(\hat{x}-x)^2}_{x_t,s + \abs{x_t - x}_{s}}\,dt\\
&\leq \int_0^1 (1-t)\,\frac{C''}{\sigma^2} \abs{(\hat{x}-x)}^2_{s +\sigma + \abs{x_t - x}_{s}}\,dt\\
&\leq \frac{C''}{2\sigma^2}\,\abs{\hat{x} - x}^2_{{s+\sigma+\abs{\hat{x} - x}}_s}.
\end{align*}

\end{proof}

\begin{proof}[Proof of theorem \ref{teorema inversa}]
Let $s,\sigma,\eta$, with $\eta<s<1$ be fixed positive real numbers. Let also $y\in B^F_{s+\sigma}(\eps)$, for some $\eps>0$. We define the following map: 
\begin{equation*}
f: B^E_{s+\sigma}(\sigma)\to E_s,\quad x\mapsto x + \inderiv{\phi}(x)(y - \phi(x)).
\end{equation*}
We want to prove that, if $\eps$ is sufficiently small, there exists a sequence defined by induction by
\begin{equation*}
\eqsys{x_0 = 0\\
x_{n+1} = f(x_n) ,}
\end{equation*}
converging towards some point $x\in B^E_s(\eta)$, a preimage of $y$ by $\phi$.\\ Let us introduce two sequences
\begin{itemize}[leftmargin=*]
\item a sequence of positive real numbers $(\sigma_n)_{n\geq 0}$ such that $3\sum_n \sigma_n = \sigma$ be the total width of analyticity we will have lost at the end of the algorithm,\\
\item the decreasing sequence $(s_n)_{n\geq 0}$ defined inductively by $s_0 = s + \sigma$ (the starting width of analyticity), $s_{n+1}= s_n - 3\sigma_n$. Of course, $s_n\to s$ when $n\to + \infty$.
\end{itemize}  
Suppose now the existence of $x_0,...,x_{n+1}$.\\From $x_{k} - x_{k-1} =  \inderiv{\phi}(x_{k-1})(y - \phi(x_{k-1}))$ we see that $y - \phi(x_k) = - Q(x_{k-1},x_k)$, which permits to write $x_{k+1} - x_k = - \inderiv{\phi}(x_k)Q(x_{k-1},x_k)$, for $k= 1,...,n$.\\ Assuming that $\abs{x_{k} - x_{k-1}}_{s_k}\leq \sigma_k$, for $k=1,...n$, from the estimate of the right inverse and the previous lemma we get 

\begin{equation*}
\abs{x_{n+1} - x_n}_{s_{n+1}}\leq \frac{C}{2\sigma_n^{\tau}}\abs{x_n - x_{n-1}}^2_{s_{n}} \leq \ldots \leq C_n C_{n-1}^2\ldots C_1^{2^{n-1}}\abs{x_1 - x_0}^{2^n}_{s_1},
\end{equation*}
with $C_n=\frac{C}{2\sigma_n^{\tau}}$.\\ First, remark that $$\abs{x_1 - x_0}_{s_1}\leq \frac{C'}{(3\sigma_0)^{\tau'}}\,\abs{y-\phi(x_0)}_{s_0}\leq \frac{C}{2\sigma^{\tau}_0}\,\abs{y}_{s+\sigma}\leq \frac{C}{2\sigma^{\tau}_0}\,\eps.$$
Second, observe that if $C_k\geq 1$ (see remark below), $$\abs{x_{n+1}-x_n}_{s_{n+1}}\leq \pa{\eps\prod_{k\geq 0}C_k^{2^{-k}}}^{2^n}.$$
Third, note that $$ \sum_{n\geq 0} z^{2^n} = z + z^2 + z^4 + \ldots \leq z\sum_{n\geq 0} z^n \leq 2z, $$ if $z\leq\frac{1}{2}$.

The key point is to choose $\eps$ such that $\eps \prod_{k\geq 0}C_k^{2^{-k}}\leq \frac{1}{2}$ (or any positive number $< 1$)  and $\sum_{n\geq 0}\abs{x_{n+1} - x_n}_{s_{n+1}}<\eta$, in order for the whole sequence $(x_k)$ to exist and converge in $B_s(\eta)\subset E_s$. Hence, using the definition of the $C_n$'s and the fact that $$\pa{\frac{C}{2}}^{-2^{-k}}=\pa{\frac{2}{C}}^{\pa{\frac{1}{2}}^k}\Longrightarrow\prod  \pa{\frac{2}{C}}^{\pa{\frac{1}{2}}^k} = \pa{\frac{2}{C}}^{\sum\frac{1}{2^k}}=\pa{\frac{2}{C}}^2,$$ within $\sum_k \frac{1}{2^k} = \sum_k k\frac{1}{2^{k}}=2$, we obtain as a sufficient value
\begin{equation}
\eps = \eta \frac{2}{C^2}\prod_{k\geq 0}\sigma_k^{\tau\,(\frac{1}{2})^k}.
\label{bound epsilon}
\end{equation}
Eventually, the constraint $3\sum_{n\geq 0}\sigma_n = \sigma$ gives $\sigma_k= \frac{\sigma}{6}\,\pa{\frac{1}{2}}^k$, which, plugged into \eqref{bound epsilon}, gives: 
\[\eps = \eta \, \frac{2}{C^2}\pa{\frac{\sigma}{12}}^{2\tau} > \frac{\sigma^{2\tau}\eta}{2^{8^{\tau}}C^2},\]
hence the theorem. 

A posteriori, the exponential decay we proved makes straightforward the further assumption $\abs{x_k - x_{k-1}}_{s_k}<\sigma_k$ to apply lemma \ref{lemma Q}.\\ Concerning the bounds over the constant $C$, as $\sum_k \abs{x_{k+1} - x_k}_{s_{k+1}}\leq\eta$, we see that all the $\abs{x_n}_{s_n}$ are bounded, hence the constants $C'$ and $C''$ depending on them. \\Moreover, to have all the $C_n\geq 1$, as we previously supposed, it suffices to assume $C\geq \sigma/3$.
\end{proof}

\begin{rmk}
In case the operator $\phi$ is defined only on polynomially small balls $$\phi: B^E_{s+\sigma}(c_0 \sigma^{\ell})\to F_s,\, c_0>0, \forall s,\sigma$$ the statement and the proof of theorem \ref{teorema inversa} still hold, provided that $\eta$ is chosen small enough ($\eta< 2c_0(\sigma/12)^{\ell}$ suffices). \\ This is the case of the operators defined in sections \ref{dissipative Herman} and \ref{Russmann section vector fields}, where $\ell=2$. 
\label{remark polynomial}
\end{rmk}
\subsection{Local uniqueness and regularity of the normal form}
We want to show the uniqueness and some regularity properties of the right inverse $\psi$ of $\phi$, assuming the additional left invertibility of $\phi'$ (which is the case, for the particular operator $\phi'$ of interest to us).
\begin{defn}
We will say that a family of norms $(\abs{\cdot}_s)_{s>0}$ on a grading $(E_s)_{s>0}$ is \emph{log-convex} if for every $x\in E_s$ the map $ s\mapsto \log\abs{x}_s $ is convex.
\end{defn}

\begin{lemma} If $(\abs{\,\cdot\,}_s)$ is log-convex, the following inequality holds \[\abs{x}^2_{s+\sigma}\leq \abs{x}_s \abs{x}_{s+\tilde{\sigma}},\quad \forall s,\sigma,\tilde{\sigma}=\sigma(1+\frac{1}{s}).\] 
\end{lemma}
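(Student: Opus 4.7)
\medskip

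\noindent\textbf{Proof plan.} The inequality is a Hadamard three-lines type estimate for the scale of weighted norms. My plan is to deduce it from the log-convexity of $s\mapsto\log\abs{x}_s$ (already essentially established in the passage introducing this lemma) combined with an elementary monotonicity step. The analytic content has already been done; what remains is just the bookkeeping that matches the particular $\tilde\sigma$ in the statement to the midpoint gap that log-convexity naturally produces.

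\medskip

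\noindent First I would record the standard consequence of log-convexity: since $s\mapsto\log\abs{x}_s$ is convex, for any $s_0<s_1$ and any $\mu\in[0,1]$,
\[
\abs{x}_{(1-\mu)s_0+\mu s_1}\;\leq\;\abs{x}_{s_0}^{1-\mu}\abs{x}_{s_1}^{\mu}.
\]
Specializing to $s_0=s$, $s_1=s+2\sigma$ and $\mu=1/2$, so that the midpoint is exactly $s+\sigma$, yields the prototypical interpolation
\[
\abs{x}_{s+\sigma}^{2}\;\leq\;\abs{x}_s\,\abs{x}_{s+2\sigma}.
\]

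\medskip

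\noindent Next I would exploit the standing regime of the paper, where $0<s<1$ (cf.\ the normalization imposed in \S\ref{Spaces of vector fields}). Under this constraint one has $1+1/s\geq 2$, hence $\tilde\sigma=\sigma\pa{1+\frac{1}{s}}\geq 2\sigma$. Because the family of norms is increasing in $s$, this gives $\abs{x}_{s+2\sigma}\leq\abs{x}_{s+\tilde\sigma}$, and plugging this into the midpoint estimate produces the claimed inequality.

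\medskip

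\noindent The log-convexity itself is not a real obstacle: it follows from Hölder's inequality applied to the sequence $(\abs{x_k}e^{\abs{k}s})_{k\in\Z^n}$ viewed as an element of $\ell^{1}(\Z^n)$ with respect to the counting measure, exactly as sketched at the end of \S\ref{subsection The infinitesimal equation}. Thus the only point requiring care is the elementary step $\tilde\sigma\geq 2\sigma$, which fixes the role of the condition $s<1$; everything else is formal.
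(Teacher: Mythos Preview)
Your proof is correct and follows essentially the same approach as the paper's: midpoint convexity plus the monotonicity of the norms, together with the observation that $s<1$ forces $\tilde\sigma\geq 2\sigma$. The only cosmetic difference is the order of the two steps---the paper applies the midpoint inequality on $[s,s+\tilde\sigma]$ and then uses monotonicity from $s+\sigma$ up to the midpoint $s+\tilde\sigma/2$, whereas you apply the midpoint inequality on $[s,s+2\sigma]$ first and then use monotonicity from $s+2\sigma$ to $s+\tilde\sigma$; the content is identical.
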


\begin{proof}
If $f: s\mapsto \log\abs{x}_s$ is convex, this inequality holds \[f\pa{\frac{s_1 + s_2}{2}}\leq \frac{f(s_1) + f(s_2)}{2}.\] Let now $x\in E_s$, then
\[\log\abs{x}_{s+\sigma}\leq\log\abs{x}_{\frac{2s + \tilde{\sigma}}{2}}\leq \frac{1}{2}\pa{\log\abs{x}_s + \log\abs{x}_{s+\tilde{\sigma}}}=\frac{1}{2}\log(\abs{x}_s\abs{x}_{s+\tilde{\sigma}}),\]hence the lemma. 
\end{proof}
Let us assume that the family of norms $(\abs{\cdot}_s)_{s>0}$ of the grading $(E_s)_{s>0}$ are log-convex. To prove the uniqueness of $\psi$ we are going to assume that $\phi'$ is also left-invertible. 

\begin{prop}[Lipschitz continuity of $\psi$] 
\label{lipschitz}
Let $\sigma<s$. If $y,\hat{y}\in B^{F}_{s+\sigma}(\eps)$ with $\eps = {3^{-4\tau}2^{-16\tau}}\frac{\sigma^{6\tau}}{4C^3}$, the following inequality holds
\[\abs{\psi(y) - \psi(\hat{y})}_s\leq L \abs{y-\hat{y}}_{x,s+\sigma},\] with $L=2C'/\sigma^{\tau'}$. In particular, $\psi$ being the unique local right inverse of $\phi$, it is also its unique left inverse.
\end{prop}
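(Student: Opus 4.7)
The plan is to run the quadratically-convergent iteration of Theorem \ref{teorema inversa} starting from the point $\hat{x} := \psi(\hat{y})$ rather than from $0$, with new target $y$, and to complement it with a local uniqueness statement for preimages of $\phi$.

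Concretely, I would set $x_0 = \hat{x}$ and $x_{n+1} = x_n + \phi'^{-1}(x_n)(y - \phi(x_n))$. Since $\phi(\hat{x}) = \hat{y}$, the initial defect is simply $y - \hat{y}$, of norm at most $2\eps$. The extra factor $\sigma^{4\tau}$ in the smallness condition of the present statement, compared with the bound appearing in Theorem \ref{teorema inversa}, leaves enough slack to rerun verbatim the estimates from the proof of that theorem starting from $\hat{x}$: all $x_n$ remain in $B^E_{s+\sigma}(\sigma)$, they converge to some $x_\infty \in B^E_s(\eta)$ with $\phi(x_\infty) = y$, and the super-exponential decay of $\abs{x_{n+1}-x_n}_{s_{n+1}}$ yields the telescoping bound
\[
\abs{x_\infty - \hat{x}}_s \leq \sum_{n\geq 0}\abs{x_{n+1}-x_n}_{s_{n+1}} \leq 2\,\abs{x_1 - x_0}_{s_1}.
\]
The first step is controlled by the linearized bound on $\phi'^{-1}$ of Theorem \ref{teorema inversa}, which gives $\abs{x_1 - x_0}_{s_1} \leq (C'/\sigma^{\tau'})\,\abs{y - \hat{y}}_{\hat{x}, s+\sigma}$. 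After the harmless swap of the base point $\hat{x} \leftrightarrow x$ in the deformed norm (which costs only a width shift of order $\eta$, absorbed into the shrinkage), this already gives the claimed Lipschitz inequality with $L = 2C'/\sigma^{\tau'}$, \emph{provided} I can identify $x_\infty$ with $\psi(y)$.

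That identification is a \emph{local uniqueness} statement: any two preimages $x,\tilde{x} \in B^E_s(\eta)$ of the same target must coincide. To prove it I would use the Taylor identity
\[
0 = \phi(\tilde{x}) - \phi(x) = \phi'(x)(\tilde{x}-x) + Q(x,\tilde{x}),
\]
invert $\phi'(x)$ (which is now assumed left-invertible, the additional hypothesis of this subsection), and apply Lemma \ref{lemma Q} to obtain, for appropriate shrinkages $\sigma_1,\sigma_2$,
\[
\abs{\tilde{x} - x}_{s'} \leq \frac{C'C''}{2\sigma_1^{\tau'}\sigma_2^{2}}\,\abs{\tilde{x} - x}^2_{s' + \sigma_1 + \sigma_2 + \abs{\tilde{x} - x}_{s' + \sigma_1}}.
\]
The key move is then to invoke the log-convexity lemma to split the squared factor as $\abs{\tilde{x} - x}_{s'}\cdot\abs{\tilde{x} - x}_{s''}$ at a larger but still admissible width $s'' \leq s+\sigma$, and to bound the second factor crudely by $2\eta$. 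The admissible choice of $\eps$ forces the resulting prefactor to be strictly less than $1$, whence $\tilde{x} = x$.

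The two ``in particular'' assertions follow immediately: the uniqueness of $\psi$ as a local right inverse \emph{is} the uniqueness of preimages just proved, and $\psi$ is a left inverse because for any $x_0$ small enough to have $\phi(x_0) \in B^F_{s+\sigma}(\eps)$, both $x_0$ and $\psi(\phi(x_0))$ are preimages of $\phi(x_0)$ in $B^E_s(\eta)$ and therefore coincide. The main obstacle I anticipate is the triple bookkeeping of shrinkages (one for the Newton step, one for $\phi'(x)^{-1}$, and one for $Q$) so that the log-convexity step remains genuinely contractive; this precisely accounts for the exponent $6\tau$ in the admissible size of $\eps$, as opposed to the exponent $2\tau$ of Theorem \ref{teorema inversa}.
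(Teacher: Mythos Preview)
Your strategy is sound and would succeed, but it is more circuitous than the paper's. The paper does \emph{not} rerun the Newton scheme at all: it goes straight to the identity you reserve for the uniqueness step. Writing $x=\psi(y)$, $\hat{x}=\psi(\hat{y})$ and using the (now assumed) \emph{left}-invertibility of $\phi'(x)$, one has
\[
x-\hat{x}=\phi'^{-1}(x)\bigl(\phi(x)-\phi(\hat{x})+Q(x,\hat{x})\bigr),
\]
hence $\abs{x-\hat{x}}_s\le \dfrac{C'}{\sigma^{\tau'}}\abs{y-\hat{y}}_{x,s+\sigma}+\dfrac{C}{2\xi^{\tau}}\abs{x-\hat{x}}^2_{s+2\xi+2\eta}$ for an auxiliary loss $\xi$. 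The log-convexity interpolation $\abs{x-\hat{x}}^2_{s+2\xi+2\eta}\le \abs{x-\hat{x}}_s\,\abs{x-\hat{x}}_{s+\tilde\sigma}$ then lets one absorb the quadratic term into the left-hand side, producing directly $\abs{x-\hat{x}}_s\le \dfrac{2C'}{\sigma^{\tau'}}\abs{y-\hat{y}}_{x,s+\sigma}$. In other words, the Taylor-plus-interpolation maneuver you invoke only for uniqueness already yields the full Lipschitz estimate with the stated constant $L=2C'/\sigma^{\tau'}$; uniqueness and the left-inverse property are then the degenerate case $y=\hat{y}$.

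Two minor cautions about your route. First, your telescoping bound $\sum_n\abs{x_{n+1}-x_n}\le 2\abs{x_1-x_0}$ is not quite what the Newton estimates give: one gets $\sum_n\abs{x_{n+1}-x_n}\le (1+D)\abs{x_1-x_0}$ with $D=\prod_{k\ge1}C_k^{2^{-k}}\ge 1$, so the Lipschitz constant you obtain would carry extra $\sigma^{-\tau}$ factors and not match the claimed $L$. Second, restarting Newton from $\hat{x}=\psi(\hat{y})$ requires $\hat{x}$ to live at a strictly larger width than $s$, which forces you to first apply Theorem~\ref{teorema inversa} on a sub-strip; this is exactly the ``slack'' you allude to, but it should be made explicit. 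Neither point is fatal, but both disappear if you follow the paper's direct argument.
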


\begin{proof}
In order to get the wanted estimate we introduce an intermediate  parameter $\xi$, that will be chosen later, such tat $\eta<\xi<\sigma<s<s+\sigma$.\\ 
To lighten notations let us call $\psi(y)=: x$ and $\psi(\hat{y})=: \hat{x}$. Let also $\eps=\frac{\xi^{2\tau}\eta}{2^{8\tau}C^2}$ so that if $y,\hat{y}\in B^{F}_{s+\sigma}(\eps)$, $x,\hat{x}\in B^E_{s+\sigma -\xi}(\eta)$, by theorem \ref{teorema inversa}, provided that $\eta< s + \sigma -\xi$ - to check later. In particular, we assume that any $x,\hat{x}\in B^{E}_{s + \sigma -\xi}$ satisfy $\abs{x - \hat{x}}_{s+\sigma - \xi}\leq 2\eta.$
Writing \[(x-\hat{x})=\phi'^{-1}(x)\cdot\phi(x)(x-\hat{x}),\]
and using $$\phi'(x)(x-\hat{x})= \phi(\hat{x})-\phi(\hat{x})-Q(x,\hat{x}),$$ we get \[ x-\hat{x}= \inderiv{\phi}(x)\pa{\phi(\hat{x}) - \phi(x) - Q(x,\hat{x})}.\]
Taking norms we have
 \begin{align*}
\abs{x-\hat{x}}_s  &\leq \frac{C'}{\sigma^{\tau'}}\abs{y - \hat{y}}_{x,s+\sigma} + \frac{C}{2\xi^{\tau}}\abs{x-\hat{x}}^2_{s+2\xi + \abs{x-\hat{x}}_{s + \xi}},\\
&\leq \frac{C'}{\sigma^{\tau'}}\abs{y - \hat{y}}_{x,s+\sigma} + \frac{C}{2\xi^{\tau}}\abs{x-\hat{x}}^2_{s+2\xi + 2\eta},
\end{align*} 
 by lemma \ref{lemma Q} and the fact that $\abs{x-\hat{x}}_{s+\xi}\leq\abs{x-\hat{x}}_{s+\sigma-\xi}$ (choosing $\xi$ so that $2\xi< \sigma $ too). \\ Let us define $\tilde{\sigma}=(2\xi + 2\eta)(1 + 1/s)$ and use the interpolation inequality \[\abs{x-\hat{x}}^2_{s+2\eta +2\xi}\leq\abs{x-\hat{x}}_s\abs{x-\hat{x}}_{s+\tilde{\sigma}}\] to obtain \[(1 - \frac{C}{2\xi^\tau}\abs{x-\hat{x}}_{s+\tilde{\sigma}})\abs{x-\hat{x}}_s\leq \frac{C'}{\sigma^{\tau'}}\abs{y-\hat{y}}_{x,s+\sigma}.\]
We now choose $\eta$ so small to have
\begin{itemize}[leftmargin=*]
\item $\tilde{\sigma}\leq\sigma - \xi$, which implies $\abs{x-\hat{x}}_{s+\tilde{\sigma}}\leq 2\eta$. It suffices to have $\eta\leq\frac{\sigma}{2(1+\frac{1}{s})} - \frac{3}{2}\xi$.
\item $\eta\leq\frac{\xi^{\tau}}{2C}$ in order to have $\frac{C}{2\xi^\tau}\abs{x-\hat{x}}_{s+\sigma}\leq\frac{1}{2}.$
\end{itemize}
A possible choice is $\xi = \frac{\sigma^2}{12}$ and $\eta = \pa{\frac{\sigma}{12}}^{2\tau}\frac{1}{4C},$ hence our choice of $\eps$.
\end{proof}

\begin{prop}[Smooth differentiation of $\psi$]
\label{smoothness}
 Let $\sigma<s<s+\sigma$ and $\eps$ be as in proposition \ref{lipschitz}. There exists a constant $K$ such that for every $y,\hat{y}\in B_{s+\sigma}^F(\eps)$ we have \[\abs{\psi(\hat{y}) - \psi(y) - \inderiv{\phi}(\psi(y))(\hat{y} - y)}_s\leq K(\sigma) \abs{\hat{y} - y}^2_{x,s+\sigma},\] and the map $\psi': B_{s+\sigma}^F(\eps)\to L(F_{s+\sigma},E_s)$ defined locally by $\psi'(y)=\phi'^{-1}(\psi(y))$  is continuous. In particular $\psi$ has the same degree of smoothness as $\phi$.
\end{prop}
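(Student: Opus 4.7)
The plan is to reproduce, at one order higher, the argument used for Lipschitz continuity. Set $x=\psi(y)$ and $\hat x = \psi(\hat y)$ so that $\phi(x)=y$ and $\phi(\hat x)=\hat y$. The Taylor remainder \eqref{resto Taylor} gives the exact identity
\[
\hat y - y \;=\; \phi(\hat x)-\phi(x) \;=\; \phi'(x)(\hat x - x) + Q(x,\hat x),
\]
and because $\phi'(x)$ admits the left/right inverse $\phi'^{-1}(x)$, composing with it yields
\[
\psi(\hat y)-\psi(y)-\phi'^{-1}(\psi(y))(\hat y - y)\;=\;-\,\phi'^{-1}(x)\,Q(x,\hat x).
\]
So the quadratic-in-$(\hat y - y)$ behaviour of the error reduces to controlling $\phi'^{-1}(x)\,Q(x,\hat x)$.

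Next I would bound the right-hand side by combining the three estimates already at hand. Split the width $\sigma$ into three pieces $\sigma=\sigma_1+\sigma_2+\sigma_3$ (say $\sigma_i=\sigma/3$) and work on the shrinking domains $s+\sigma \supset s+\sigma_1+\sigma_2 \supset s+\sigma_1 \supset s$. First apply \eqref{bound phi'^-1} to lose the width $\sigma_1$:
\[
\bigl|\phi'^{-1}(x)\,Q(x,\hat x)\bigr|_s \;\le\; \frac{C'}{\sigma_1^{\tau'}}\,\bigl|Q(x,\hat x)\bigr|_{x,\,s+\sigma_1}.
\]
Then use Lemma \ref{lemma Q} to lose $\sigma_2$:
\[
\bigl|Q(x,\hat x)\bigr|_{x,\,s+\sigma_1}\;\le\;\frac{C''}{2\sigma_2^{2}}\,\bigl|\hat x - x\bigr|_{s+\sigma_1+\sigma_2+|\hat x-x|_{s+\sigma_1}}^{2}.
\]
Since $\eps$ was already chosen in Proposition \ref{lipschitz} so that $|\hat x-x|_{s+\sigma-\xi}$ is small compared to $\sigma_3$, the outer width $s+\sigma_1+\sigma_2+|\hat x-x|_{s+\sigma_1}$ stays below $s+\sigma$, and the Lipschitz bound of Proposition \ref{lipschitz} gives $|\hat x - x|_{s+\sigma_1+\sigma_2+\dots}\le L\,|\hat y-y|_{x,s+\sigma}$. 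Concatenating the three estimates produces
\[
\bigl|\psi(\hat y)-\psi(y)-\phi'^{-1}(\psi(y))(\hat y-y)\bigr|_s \;\le\; K(\sigma)\,|\hat y-y|_{x,s+\sigma}^{2},
\]
with $K(\sigma)=\frac{C'C''L^{2}}{2\,\sigma_1^{\tau'}\sigma_2^{2}}$, which is the stated second-order Taylor bound; in particular $\psi$ is Fréchet differentiable from $F_{s+\sigma}$ into $E_s$ with derivative $\psi'(y)=\phi'^{-1}(\psi(y))$.

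For continuity of $y\mapsto\psi'(y)=\phi'^{-1}(\psi(y))$, I would combine the Lipschitz continuity of $\psi$ (Proposition \ref{lipschitz}) with the continuity of $x\mapsto\phi'^{-1}(x)$, which itself follows from the identity $\phi'^{-1}(\hat x)-\phi'^{-1}(x)=\phi'^{-1}(\hat x)\,[\phi'(x)-\phi'(\hat x)]\,\phi'^{-1}(x)$ together with the mean-value bound on $\phi'(x)-\phi'(\hat x)$ coming from \eqref{bound phi''}; both operations cost a fixed loss of analyticity width, absorbed into the $\sigma_i$'s above. Finally, the higher-regularity assertion is obtained by bootstrapping: differentiating formally the identity $\phi'(\psi(y))\cdot\psi'(y)=\mathrm{Id}$ one expresses $\psi''(y)$ as a composition built from $\phi'^{-1}\circ\psi$ and $\phi''\circ\psi$, and by induction $\psi^{(k)}$ is a polynomial expression in $\phi'^{-1}\circ\psi,\phi''\circ\psi,\dots,\phi^{(k)}\circ\psi$; the estimates of the previous step show that each such composition is bounded on scales of Banach spaces, so $\psi\in C^{k}$ whenever $\phi\in C^{k}$.

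The delicate point — as in Proposition \ref{lipschitz} — is the bookkeeping of analyticity widths: Lemma \ref{lemma Q} inflates the norm of $\hat x - x$ from radius $s+\sigma_1$ to $s+\sigma_1+\sigma_2+|\hat x-x|_{s+\sigma_1}$, and the Lipschitz step then needs one more margin in order to absorb $|\hat x-x|$ into a norm indexed at a width still $\le s+\sigma$. Choosing $\eps$ sufficiently small (of the order indicated in Proposition \ref{lipschitz}) is exactly what makes this chain of inequalities close; once this scheme is set up, no further analytic input beyond the two bounds \eqref{bound phi'^-1}–\eqref{bound phi''} and log-convexity of the weighted norms is required.
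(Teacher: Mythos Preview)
Your argument is correct and follows essentially the same route as the paper: the paper derives the identical identity $\Delta=-\,\phi'^{-1}(x)\,Q(x,\hat x)$ and then bounds it by combining the estimate on $\phi'^{-1}$, Lemma~\ref{lemma Q}, and the Lipschitz bound of Proposition~\ref{lipschitz}, exactly as you do (your width splitting into $\sigma_1+\sigma_2+\sigma_3$ is more explicit than the paper's ``for some $\bar\sigma$'', but the content is the same). The only minor difference is in the continuity/higher-smoothness step: the paper invokes directly that inversion of linear operators between Banach spaces is analytic, so $x\mapsto\phi'^{-1}(x)$ inherits the smoothness of $\phi'$, whereas you spell this out via the resolvent identity and a bootstrapping induction on $\psi^{(k)}$---both arguments are standard and equivalent.
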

\begin{proof}
 Let's baptize some terms
\begin{itemize}[leftmargin=*]
\item $\Delta := \psi(\hat{y}) - \psi(y) - \inderiv{\phi}(x)(\hat{y} - y) $
\item $\delta:= \hat{y} - y$, the increment
\item $\xi:= \psi(y + \delta) - \psi(y)$
\item $\Xi:= \phi(x + \xi) - \phi(x)$.
\end{itemize}
With these new notations we can see $\Delta$ as 
\begin{align*}
\Delta &= \xi - \inderiv{\phi}(x)\cdot\Xi\\
&=\inderiv{\phi}(x)(\phi'(x)\cdot\xi - \Xi)\\
& =\inderiv{\phi}(x)(\phi'(x)\xi - \phi(x+\xi) + \phi(x))\\
& = -\inderiv{\phi}(x)Q(x,x+\xi)
\end{align*}
Taking norms we have
\[\abs{\Delta}_s\leq K \abs{\hat{y} - y}^2_{x,s+\bar{\sigma}}\] by proposition \ref{lipschitz} and lemma \ref{lemma Q}, for some $\bar\sigma$ which goes to zero when $\sigma$ does, and some constant $K>0$ depending on $\sigma$ . Up to substituting $\sigma$ for $\bar{\sigma}$, we have proved the statement.\\
In addition \[\psi'(y)= \phi^{-1}(y)'= \phi'^{-1}\circ \phi^{-1}(y)= \phi'^{-1}(\psi(y)),\] the inversion of linear operators between Banach spaces being analytic, the map $y\mapsto \phi'^{-1}(\psi(y))$ has the same degree of smoothness as $\phi'$.
\end{proof}

It is sometimes convenient to extend $\psi$ to non-Diophantine characteristic frequencies $(\alpha,a)$. Whitney smoothness guarantees that such an extension exists.
Let suppose that $\phi(x)=\phi_{\nu}(x)$ depends on some parameter $\nu\in B^k$ (the unit ball of $\R^k$) and that it is $C^1$ with respect to $\nu$ and that estimates on $\phi'^{-1}_{\nu}$ and $\phi_{\nu}''$ are uniform with respect to $\nu$ over some closed subset $D$ of $\R^{k}$. 

\begin{prop}[Whitney differentiability]Let us fix $\eps, \sigma, s$ as in proposition \ref{lipschitz}. The map $\psi : D\times B^{F}_{s+\sigma}(\eps)\to B^{E}_s(\eta)$ is $C^1$-Whitney differentiable and extends to a map $\psi : \R^{2n}\times B^{F}_{s+\sigma}(\eps)\to B^{E}_s(\eta) $ of class $C^1$. If $\phi$ is $C^k$, $1\leq k\leq \infty$, with respect to $\nu$, this extension is $C^k$.
\label{Whitney}
\end{prop}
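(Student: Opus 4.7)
The plan is to exhibit a candidate for the $\nu$-derivative of $\psi$ by formally differentiating the inversion identity $\phi_\nu(\psi_\nu(y))=y$ in $\nu$, then to verify the Whitney condition using the quantitative estimates already proved for the right inverse, and finally to invoke the classical Whitney extension theorem to obtain a $C^1$ extension to the full parameter space.

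First I would define, for $\nu\in D$ and $y\in B^{F}_{s+\sigma}(\eps)$, the candidate derivative
\[
D_\nu\psi(\nu,y) := -\phi_\nu'^{-1}\bigl(\psi_\nu(y)\bigr)\cdot\partial_\nu\phi_\nu\bigl(\psi_\nu(y)\bigr),
\]
which is well-defined as an element of $L(\R^{2n},E_s)$ thanks to the uniform right-invertibility of $\phi_\nu'$ over $D$ and the $C^1$ dependence of $\phi$ on $\nu$, at the cost of a controlled loss of analyticity width built into \eqref{bound phi'^-1}.

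Next I would verify the Whitney condition. Given $\nu_0,\nu\in D$, write $x_0=\psi_{\nu_0}(y)$ and $x=\psi_\nu(y)$, so that $\phi_\nu(x)=\phi_{\nu_0}(x_0)=y$. Decomposing
\[
0=\phi_\nu(x)-\phi_{\nu_0}(x_0)=\bigl[\phi_\nu(x)-\phi_\nu(x_0)\bigr]+\bigl[\phi_\nu(x_0)-\phi_{\nu_0}(x_0)\bigr],
\]
I Taylor-expand the first bracket via the remainder $Q_\nu$ introduced in \eqref{resto Taylor}, and the second via the $C^1$-assumption on $\nu\mapsto\phi_\nu$; solving for $x-x_0$ by applying $\phi_\nu'^{-1}(x_0)$ gives
\[
x-x_0=-\phi_\nu'^{-1}(x_0)\,\partial_\nu\phi_{\nu_0}(x_0)\cdot(\nu-\nu_0)+o(|\nu-\nu_0|)+O\bigl(|x-x_0|^2\bigr).
\]
Proposition \ref{lipschitz}, combined with the continuity of $\nu\mapsto\phi_\nu(x_0)$, yields $|x-x_0|_s=O(|\nu-\nu_0|)$, so the quadratic term is absorbed into $o(|\nu-\nu_0|)$. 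This is exactly the Whitney $C^1$-condition at the base point $\nu_0$, with uniformity in the base point inherited from the uniformity of the constants $C'$ and $C''$ over $D$.

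Once Whitney $C^1$-differentiability is established on $D\times B^F_{s+\sigma}(\eps)$, the Whitney extension theorem in finite dimension furnishes the required $C^1$ extension to $\R^{2n}\times B^F_{s+\sigma}(\eps)$. For $C^k$ regularity with $k\geq 2$, I would iterate: the explicit formula for $D_\nu\psi$ exhibits the derivative as a composition of $C^{k-1}$ pieces ($\phi_\nu'^{-1}$, $\partial_\nu\phi_\nu$, and $\psi_\nu$ itself, the last by the inductive hypothesis), so by induction on $k$ one obtains all higher Whitney derivatives and extends them. The main obstacle I expect is the usual scale-of-Banach-space bookkeeping: each new differentiation costs a small width, and one must arrange the chain of shrinkings so that all final estimates hold at a fixed target width $s$. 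The bound \eqref{bound phi'^-1} and its uniformity over $D$ are precisely what make this accounting work, and the quadratic remainder in the Whitney condition is absorbed exactly as in the proof of Proposition \ref{smoothness}, via the log-convexity interpolation inequality.
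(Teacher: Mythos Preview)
Your proposal is correct and follows essentially the same approach as the paper: define the candidate derivative $-\phi_\nu'^{-1}(\psi_\nu(y))\cdot\partial_\nu\phi_\nu(\psi_\nu(y))$, use the Lipschitz property of $\psi$ (at fixed parameter) together with the $C^1$-dependence of $\phi$ on $\nu$ to get $\abs{x-x_0}_s=O(\abs{\nu-\nu_0})$, Taylor-expand to verify the Whitney condition, and invoke the Whitney extension theorem. The only cosmetic difference is that the paper performs a single joint Taylor expansion of $\phi(\nu+\mu,x_{\nu+\mu})$ in both variables at $(\nu,x_\nu)$, whereas you split the increment into two brackets and expand each separately; this leads to the same estimate.
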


\begin{proof}
Let $y\in B^{F}_{s+\sigma}(\eps)$. For $\nu,\nu + \mu\in D$, let $x_\nu = \psi_{\nu}(y)$ and $x_{\nu + \mu}=\psi_{\nu+\mu}(y)$, implying \[\phi_{\nu+\mu}(x_{\nu+\mu}) - \phi_{\nu+\mu}(x_{\nu}) = \phi_{\nu}(x_\nu) - \phi_{\nu+\mu}(x_{\nu}).\] It then follows, since $y\mapsto\psi_{\nu+\mu}(y)$ is Lipschitz, that \[\abs{x_{\nu+\mu} - x_{\nu}}_s\leq L \abs{\phi_{\nu}(x_{\nu}) - \phi_{\nu+\mu}(x_{\nu})}_{x_{\nu},s+\sigma},\] taking ${y}=\phi_{\nu +\mu}(x_{\nu}),\hat{y}=\phi_{\nu+\mu}(x_{\nu+\mu}).$ In particular since $\nu\mapsto\phi_{\nu}(x_{\nu})$ is Lipschitz, the same is for $\nu\mapsto x_{\nu}.$
Let us now expand $\phi_{\nu+\mu}(x_{\nu+\mu})=\phi(\nu+\mu, x_{\nu+\mu})$ in Taylor at $(\nu,x_{\nu}).$ We have 
\[\phi(\nu+\mu,x_{\nu+\mu}) = \phi(\nu,x_{\nu}) + D\phi(\nu,x_{\nu})\cdot (\mu, x_{\nu+\mu} - x_{\nu}) + O(\mu^2, \abs{x_{\nu+\mu} - x_{\nu}}_s^2),\] hence formally defining the derivative $\partial_{\nu} x_{\nu} := - \phi'^{-1}_{\nu}(x_{\nu})\cdot\partial_{\nu} \phi_{\nu}(x_{\nu}),$ we obtain \[x_{\nu+\mu} - x_{\nu} - \partial_{\nu} x_{\nu}\cdot\mu = \phi'^{-1}_{\nu}(x_{\nu})\cdot O(\mu^2),\] hence \[\abs{x_{\nu+\mu} - x_{\nu} - \partial_{\nu} x_{\nu}\cdot\mu}_s = O(\mu^2)\] by Lipschitz property of $\nu\mapsto x_{\nu},$ when $\mu\mapsto 0$, locally uniformly with respect to $\nu$. Hence $\nu\mapsto x_{\nu}$ is $C^1$-Whitney-smooth and by Whitney extension theorem, the claimed extension exists. Similarily if $\phi$ is $C^k$ with respect to $\nu$, $\nu\mapsto x_{\nu}$ is $C^k$-Whitney-smooth. Ssee \cite{Abraham-Robbin:1967} for the straightforward generalization of Whitney's theorem to the case of interest to us: $\psi$ takes values in a Banach space instead of a finite dimension vector space; but note that the extension direction is of finite dimension though.
\end{proof}

\section{Inversion of a holomorphism of $ \T^n_s $ }
We present here a classical result on the inversion of holorphisms on the complex torus $\T^n_s$ that intervened to guarantee the well definition of normal form operators $\phi$.

All complex extensions of manifolds are defined at the help of the $\ell^\infty$-norm, \[\T^n_s = \set{\teta\in\T^n_\C : \abs{\teta}:= \max_{1\leq j\leq n}\abs{\Im{\teta_j}}\leq s}.\]
Let also define $\R^n_s := \R^n \times (-s,s)$ and consider the universal covering of $\T^n_s$, $p: \R^n_s\to \T^n_s$.
\begin{thm}
\label{theorem well def} Let $v:\T^n_s\to\C^n$ be a vector field such that $\abs{v}_s < \sigma/n$. The map $\id + v:\T^n_{s-\sigma}\to\R^n_{s}$ induces a map $\varphi = \id + v : \T^n_{s-\sigma}\to\T^n_s$ which is a biholomorphism and there is a unique biholomorphism $\psi : \T^n_{s-2\sigma}\to\T^n_{s-\sigma}$ such that $\varphi\circ\psi = \id_{\T^n_{s - 2\sigma}}.$ \\
In particular the following hold: \[\abs{\psi - \id}_{s-2\sigma}\leq \abs{v}_{s-\sigma}\] and, if $\abs{v}_s<\sigma/2n$ \[\abs{\psi' - \id}_{s-2\sigma}\leq \frac{2}{\sigma}\abs{v}_{s}.\]
\end{thm}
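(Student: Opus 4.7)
The plan is to construct the inverse $\psi$ by a Banach fixed-point argument in the weighted norm and to read both estimates directly off the fixed-point equation. First, $\varphi=\id+v$ is well defined as a holomorphism $\T^n_{s-\sigma}\to\T^n_s$: it is $2\pi\Z^n$-periodic (like $v$), and $\sup_{\T^n_{s-\sigma}}|v|\leq|v|_s<\sigma/n\leq\sigma$ ensures that $\Im(\theta+v(\theta))$ stays in $[-s,s]^n$. To invert it I look for $\psi=\id+w$ with $\varphi\circ\psi=\id$, equivalently $w=-v\circ(\id+w)$, as the fixed point of $T:w\mapsto -v\circ(\id+w)$ on the closed ball $B=\{w\in\mathcal{A}(\T^n_{s-2\sigma},\C^n):|w|_{s-2\sigma}\leq\sigma\}$.

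The key technical ingredient is a weighted-norm composition estimate: if $w\in\mathcal{A}(\T^n_{s'},\C^n)$ satisfies $|w|_{s'}\leq r$ and $f\in\mathcal{A}(\T^n_{s'+r})$, then
\[ |f\circ(\id+w)|_{s'}\leq|f|_{s'+r}. \]
This follows by Fourier-expanding $f(\theta+w(\theta))=\sum_k f_k\,e^{ik\cdot\theta}e^{ik\cdot w(\theta)}$, using the product inequality for the weighted norm to get $|e^{ik\cdot w}|_{s'}\leq e^{|k|\,|w|_{s'}}\leq e^{|k|r}$, and summing. Applied with $s'=s-2\sigma$ and $r=\sigma$, it yields $|T(w)|_{s-2\sigma}\leq|v|_{s-\sigma}\leq|v|_s<\sigma$, so $T(B)\subset B$. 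For the contraction, the mean-value identity along the segment joining $\id+w_1$ to $\id+w_2$ reduces matters to $|T(w_1)-T(w_2)|_{s-2\sigma}\leq|v'|_{s-\sigma}\,|w_1-w_2|_{s-2\sigma}$; Cauchy's inequality in the weighted norm ($|\partial_i v_j|_{s-\sigma}\leq|v_j|_s/\sigma$) gives $|v'|_{s-\sigma}\leq n|v|_s/\sigma<1$. Banach's theorem then produces a unique $w\in B$, and the estimate $|\psi-\id|_{s-2\sigma}\leq|v|_{s-\sigma}$ follows at once from $w=-v\circ\psi$ and the composition lemma.

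Injectivity of $\varphi$, and hence $\psi\circ\varphi=\id$, follows from $\id+v'$ being pointwise invertible (Neumann series, using $|v'|_{s-\sigma}<1$), so $\varphi$ is a local biholomorphism that coincides with $\psi^{-1}$ on the image of $\psi$. Differentiating $\varphi\circ\psi=\id$ gives $\psi'=(\id+v'\circ\psi)^{-1}$, whence
\[ \psi'-\id=-\sum_{k\geq 1}(-v'\circ\psi)^k. \]
Under the strengthened hypothesis $|v|_s<\sigma/(2n)$, the composition estimate together with Cauchy yields $|v'\circ\psi|_{s-2\sigma}\leq|v'|_{s-\sigma}\leq 1/2$, so the geometric sum converges in the weighted norm and gives $|\psi'-\id|_{s-2\sigma}\leq 2|v|_s/\sigma$ (up to the conventional constant coming from the matrix-norm convention).

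The main obstacle is really just the weighted-norm composition estimate: it is what makes all three bounds clean and prevents the loss of an extra $\epsilon$ of analyticity, or the appearance of unwanted factors of $\sigma^{-n}$, that one would incur by arguing solely with the sup norm and Cauchy-type contour shrinking of Fourier coefficients. Once this estimate is in place, the rest of the proof is a routine Banach fixed-point and Neumann-series argument.
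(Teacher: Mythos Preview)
Your approach is correct in its main thrust and takes a genuinely different route from the paper. The paper lifts $\varphi$ to the universal cover $\R^n_s$, proves injectivity there by a mean-value contraction, obtains surjectivity onto $\R^n_{s-2\sigma}$ by a fixed-point argument in the sup norm, and then invokes the Fritzsche--Grauert lemma that an injective holomorphic map between domains of $\C^n$ is biholomorphic. You instead stay entirely on the torus and work in the weighted norm, using your composition estimate $|f\circ(\id+w)|_{s'}\le |f|_{s'+r}$ as the engine that makes the Banach fixed-point iteration close up cleanly. This is more self-contained (no external biholomorphism lemma) and more in keeping with the analytic framework of the paper; it also makes transparent why no spurious $\sigma^{-n}$ factors appear. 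The paper's lift, on the other hand, sidesteps the need to justify the composition estimate and handles the torus periodicity explicitly.

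There is one genuine gap in your argument. You write that ``injectivity of $\varphi$ \ldots\ follows from $\id+v'$ being pointwise invertible, so $\varphi$ is a local biholomorphism.'' Pointwise invertibility of the Jacobian gives only \emph{local} injectivity; it does not yield that $\varphi:\T^n_{s-\sigma}\to\T^n_s$ is globally injective, which is part of the statement. (A local biholomorphism of tori can have degree $>1$.) The fix is exactly the paper's step: pass to a lift $\hat\varphi=\id+v\circ p$ on $\R^n_{s-\sigma}$ and observe that $\hat\varphi(x)=\hat\varphi(x')$ forces $|x-x'|\le \|v'\|\,|x-x'|<|x-x'|$ by the very same mean-value/Cauchy estimate you already used for the contraction of $T$; then descend, checking that $\hat\varphi(x+2\pi k)=\hat\varphi(x)+2\pi k$. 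Once global injectivity is in hand, your Neumann-series computation for $\psi'-\id$ goes through and matches the paper's bound $2|v|_s/\sigma$ (with the factor $n$ absorbed by the operator-norm convention, exactly as the paper does in its final line).
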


\begin{proof}
Let $\hat\varphi:= \id + v\circ p : \R^n_s\to\R^n_{s+\sigma}$ be the lift of $\varphi$ to $\R^n_s$.\\Let's start proving the injectivity and surjectivity of $\hat\varphi$; the same properties for $\varphi$ descend from these.
\begin{itemize}[leftmargin=*]
\item $\hat\varphi$ is injective as a map from $\R^n_{s-\sigma}\to\R^n_s$.\\Let $\hat\varphi(x)=\hat\varphi(x')$, from the definition of $\hat\varphi$ we have 
\begin{align*}
\abs{x -x'}=\abs{v\circ p(x') - v\circ p(x)} &\leq \int_0^1 \sum_{k=1}^n\abs{\partial_{x_k} \hat{v}}_{s-\sigma}\abs{x_k'-x_k}\,dt\leq \frac{n}{\sigma}\abs{v}_s\abs{x-x'}\\ & < \abs{x - x'},
\end{align*}
hence $x'=x$.
\item $\hat\varphi: \R^n_{s-\sigma}\to \R^n_{s-2\sigma}\subset \hat\varphi (\R^n_{s-\sigma})$ is surjective.\\ Define, for every $y\in\R^n_{s-2\sigma}$ the map \[f: \R^n_{s-\sigma}\to\R^n_{s-\sigma},\, x\mapsto y - v\circ p(x),\] which is a contraction (see the last but one inequality of the previous step). Hence there exists a unique fixed point such that $\hat{\varphi}(x) = x + v\circ p(x) = y.$ 
\end{itemize}
For every $k\in 2\pi\Z^n$, the function $\R^n_s\to\R^n_s,\, x\mapsto \hat\varphi(x+k) - \hat\varphi(x)$ is continuous and $2\pi\Z^n$-valued. In particular there exists $A\in\GL_n(\Z)$ such that $\hat\varphi(x + k)= \hat\varphi(x) + Ak$.
\begin{itemize}[leftmargin=*]
\item $\varphi: \T^n_{s-\sigma}\to\T^n_s$ is injective.\\ 
Let $\varphi(p(x))=\varphi(p(x')),$ with $p(x),p(x')\in\T^n_{s-\sigma}$, hence $\hat\varphi(x')=\hat\varphi(x) + k' ,$ for some $k'\in 2\pi\Z^n$. Hence $\hat\varphi(x' - A^{-1}k')=\hat\varphi(x)$, and for the injectivity of $\hat\varphi$, $p(x)=p(x')$. In particular $\varphi$ is biholomorphic:
\begin{lemma}[\cite{Fritzsche-Grauert}] If $G\subset\C^n$ is a domain and $f: G\to\C^n$ injective and holomorphic, then $f(G)$ is a domain and $f: G\to f(G)$ is biholomorphic.
\end{lemma}
\item That $\varphi: \T^n_{s-\sigma}\to \T^n_{s-2\sigma}\subset \varphi(\T^n_{s-\sigma})$ is surjective follows from the one of $\hat\varphi$.
\item Estimate for $\psi: \T^n_{s-2\sigma}\to \T^n_{s-\sigma}$ the inverse of $\varphi$.\\ Let $\hat\psi : \R^n_{s-2\sigma}\to\R^n_{s-\sigma}$ be the inverse of $\hat\varphi$, and $y\in\R^n_{s-2\sigma}.$ From the definition of $\hat\varphi$, $v\circ p(\hat\psi(y))= y - p(\hat\psi(y)) = y - \hat\psi(y)$. Hence \[\abs{\hat\psi(y) - y}_{s-2\sigma} = \abs{v\circ p (\hat \psi (y))}_{s-2\sigma}\leq \abs{v}_{s-2\sigma}\leq\abs{v}_{s-\sigma}.\]
\item Estimate for $\psi' = \varphi'^{-1}\circ\varphi^{-1}$. We have \[\abs{\psi' - \id}_{s-2\sigma} \leq \abs{\varphi'^{-1} - \id}_{s-\sigma}\leq \frac{\abs{\varphi' - \id}_{s-\sigma}}{1 - \abs{\varphi' - \id}_{s-\sigma}}\leq \frac{2n}{2n -1}\frac{\abs{v}_s}{\sigma} \leq 2\frac{\abs{v}_s}{\sigma},\] by triangular and Cauchy inequalities.
\end{itemize}
\end{proof}

\begin{cor}[Well definition of the operators $\phi$]
\label{cor well def}For all $s,\sigma$
\begin{itemize}[leftmargin=*]
\item  if $g\in\G^{\sigma/n}_{s + \sigma}$, then $g^{-1}\in\A(\normal{T}^n_s,\normal{T}^n_{s+\sigma})$
\item if $g\in\G^{\omega,\sigma^2/2n}_{s + \sigma}$, then $g^{-1}\in\A(\normal{T}^n_s,\normal{T}^n_{s+\sigma})$.
\end{itemize}
As a consequence, the operators $\phi$ in \eqref{normal operator}, \eqref{operator Herman} and \eqref{normal Russmann} are well defined.
\end{cor}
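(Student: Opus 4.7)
The corollary splits into two independent inversion claims; my plan is to write $g^{-1}$ explicitly in each case, then deduce the well-definedness of the three normal-form operators by composition.

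First I would handle $g = (\varphi, R_0 + R_1 \cdot r) \in \G^{\sigma/n}_{s+\sigma}$. The plan is to invert the tangential part via Theorem \ref{theorem well def} applied to $\varphi = \id + (\varphi - \id)$: the bound $|\varphi - \id|_{s+\sigma} < \sigma/n$, possibly after adjusting intermediate widths by a numerical constant, produces a biholomorphism $\varphi^{-1}:\T^n_s \to \T^n_{s+\sigma}$. For the normal part I would specialize the hypothesis $|R_0(\teta) + (R_1(\teta)-\id)\cdot r|_{s+\sigma} < \sigma/n$ at $r=0$ to extract $|R_0|_{s+\sigma} < \sigma/n$, and, since the function is affine in $r$, isolate the linear part to bound $|R_1 - \id|$ by a quantity of order $\sigma/(n(s+\sigma))$; for $\sigma$ small enough with respect to $s$ this permits inversion of $R_1(\teta)$ by Neumann series. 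Composing yields
\[
g^{-1}(\Theta, R) = \bigl(\varphi^{-1}(\Theta),\, R_1(\varphi^{-1}(\Theta))^{-1}(R - R_0(\varphi^{-1}(\Theta)))\bigr),
\]
a holomorphism from $\normal{T}^n_s$ to $\normal{T}^n_{s+\sigma}$.

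For $g \in \G^{\omega, \sigma^2/(2n)}_{s+\sigma}$ of the form $g(\teta,r)= (\varphi(\teta), \,^t\varphi'^{-1}(\teta)\cdot(r + dS(\teta) + \xi))$, the candidate inverse is the explicit
\[
g^{-1}(\Theta, R) = \bigl(\varphi^{-1}(\Theta),\, ^t\varphi'(\varphi^{-1}(\Theta))\cdot R - dS(\varphi^{-1}(\Theta)) - \xi\bigr).
\]
My plan is to observe that the sharper radius $\sigma^2/(2n)$ is calibrated so that one application of Cauchy's inequality converts the bound on $\varphi - \id$ into $|\varphi' - \id|_{s+\sigma/2} \leq \sigma/n$; Theorem \ref{theorem well def} at width $s+\sigma/2$ then inverts $\varphi$, while $^t\varphi'^{-1}$ is controlled by a Neumann series on the same shrunk domain, and the displayed formula provides $g^{-1}$.

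Deducing well-definedness of the operators in \eqref{normal operator}, \eqref{operator Herman}, and \eqref{normal Russmann} will then come almost for free: since $\push{g} u = (g' \cdot u) \circ g^{-1}$ and $g^{-1}$ maps $\normal{T}^n_s$ into $\normal{T}^n_{s+\sigma}$, which is the domain of analyticity of $u$, the push-forward is analytic on $\normal{T}^n_s$; adding the constant counter-term $\lambda$ takes place in $\V_s$ (or the relevant extension $\Vh_s \oplus (-\eta r\partial_r)$, etc.). I expect the main technical obstacle to lie not in the existence of the inverse---which is essentially explicit---but in the careful bookkeeping of intermediate analyticity widths in the Cauchy and Neumann estimates, so that the final map truly lands in $\normal{T}^n_{s+\sigma}$ and the numerical constants $\sigma/n$ and $\sigma^2/(2n)$ are tight enough for everything to close up.
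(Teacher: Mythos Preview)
Your approach is essentially the same as the paper's: you write $g^{-1}$ explicitly in each case, invert the torus component $\varphi$ via Theorem~\ref{theorem well def}, control the remaining linear data by a Neumann/Cauchy argument, and then read off the well-definedness of $\phi$ from $\push{g}u = (g'\cdot u)\circ g^{-1}$. The paper's proof is terser---for the first case it simply says the claim is ``straightforward from Theorem~\ref{theorem well def}'' after a harmless rescaling, and for the symplectic case it records only the single decisive estimate $\abs{\,^t(\varphi'-\id)\cdot r}_s \leq \tfrac{n\abs{r}_s}{\sigma}\abs{\varphi-\id}_{s+\sigma}\leq \sigma/2$---but the content is identical to what you outline. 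One small slip in your exposition: in the symplectic case you invoke Theorem~\ref{theorem well def} \emph{after} the Cauchy step on $\varphi'$, but inverting $\varphi$ itself only needs $\abs{\varphi-\id}_{s+\sigma}<\sigma^2/(2n)<\sigma/n$ directly; the Cauchy estimate is what controls the factor $^t\varphi'$ appearing in the normal component of $g^{-1}$, which is exactly where the $\sigma^2$ is spent.
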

\begin{proof}
We recall the form of $g\in\G^{\sigma/n}_{s+\sigma}$: \[g(\teta,r)= (\varphi(\teta), R_0(\teta) + R_1(\teta)\cdot r).\]
$g^{-1}$ reads \[g^{-1}(\teta,r)=(\phi^{-1}(\teta), R_1^{-1}\circ\varphi^{-1}(\teta)\cdot(r - R_0\circ\varphi(\teta))).\]
Up to rescaling norms by a factor $1/2$ like $\norm{x}_s := \frac{1}{2}\abs{x}$, the first statement is straightforward from theorem \ref{theorem well def}. By abuse of notations, we keep on indicating $\norm{x}_s$ with $\abs{x}_s$.\medskip\\
Concerning those $g\in\G^{\omega,\sigma^2/2n}_{s+\sigma}$ we recall that $g^{-1}$ is given by
\[g^{-1}(\teta,r)=(\varphi'^{-1}(\teta), \,^t\varphi'\circ\varphi^{-1}(\teta)\cdot r - \rho\circ\varphi^{-1}(\teta));\]
if $\abs{\varphi^{-1} - \id}_{s}< \sigma$ and $\abs{\rho}_{s+\sigma}<\sigma/2$ with $\abs{r\cdot\varphi'\circ\varphi^{-1}(\teta)}_s < \sigma/2$ we get the wanted thesis. Just note that 
 \[\abs{\,^t(\varphi' - \id)\cdot r}_s \leq \frac{n\abs{r}_s}{\sigma}\abs{\varphi - \id}_{s+\sigma}\leq \sigma/2,\]
 the factor $n$ coming from the transposition. 
\end{proof}
\section{Estimates on the Lie brackets of vector fields}
This is just an adaptation to vector fields on $\normal{T}^n_{s+\sigma}$ of the analogous lemma for vector fields on the torus $\T^n_s$ in \cite{Poschel:2011}.
\begin{lemma}
Let $f$ and $g$ be two real analytic vector fields on $\normal{T}^n_{s+\sigma}$. The following inequality holds
\[\abs{\,\sq{f,g}\,}_s\leq \frac{2}{\sigma}\pa{1+\frac{1}{e}}\abs{f}_{s+\sigma}\abs{ g}_{s+\sigma}.\]
\label{lemma lie brackets}
\end{lemma}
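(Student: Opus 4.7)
The plan is to reduce the Lie-bracket bound to two Cauchy-type estimates on the first partial derivatives of analytic functions on $\normal{T}^n_{s+\sigma}$, handled separately for the angle and the action directions, and to combine them using the submultiplicativity of the weighted norm recalled in section \ref{complex extensions}.

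First I would expand the bracket componentwise,
\[
[f,g]^i \;=\; \sum_{j=1}^{n+m}\bigl( f^j\,\partial_{z_j} g^i - g^j\,\partial_{z_j} f^i\bigr),\qquad (z_1,\ldots,z_{n+m})=(\theta_1,\ldots,\theta_n,r_1,\ldots,r_m),
\]
apply the triangle inequality, and use $|\varphi\chi|_s\le|\varphi|_s|\chi|_s$ together with $|f^j|_s\le|f|_s$ to factor out $|f|_s$ and $|g|_s$. This reduces the problem to controlling $\sum_j |\partial_{z_j} h|_s$ in terms of $|h|_{s+\sigma}$ for an arbitrary scalar analytic $h$ on $\normal{T}^n_{s+\sigma}$.

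Second I would establish the two Cauchy-type estimates. For the angle derivatives, expanding $h=\sum_{k\in\Z^n} h_k(r)\,e^{i k\cdot\theta}$ and using the identity $|k|=\sum_{\ell=1}^n |k_\ell|$ (so that the $n$ angle derivatives are grouped without introducing a factor $n$) one gets
\[
\sum_{\ell=1}^n |\partial_{\theta_\ell} h|_s \;=\; \sum_{k} |k|\,|h_k|_s\, e^{|k|s} \;\le\; \Bigl(\sup_{x\ge 0} x\,e^{-x\sigma}\Bigr)\,|h|_{s+\sigma} \;=\; \frac{1}{e\sigma}\,|h|_{s+\sigma},
\]
the supremum being attained at $x=1/\sigma$. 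For the action derivatives, each Fourier coefficient $h_k(r)$ is holomorphic on the polydisc of radius $s+\sigma$, so the classical Cauchy inequality on a disc of radius $\sigma$ gives $|\partial_{r_\ell} h_k|_s\le \sigma^{-1}|h_k|_{s+\sigma}$, hence, summing over $k$, $|\partial_{r_\ell} h|_s\le \sigma^{-1}|h|_{s+\sigma}$.

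Plugging these two estimates into the componentwise expansion and taking the maximum over the component index $i$ produces, for each of the two summands $Df\cdot g$ and $Dg\cdot f$, the factor $(1+1/e)/\sigma$, and the symmetry of the bracket in $f,g$ accounts for the factor $2$, giving the claimed
\[
|[f,g]|_s \;\le\; \frac{2}{\sigma}\Bigl(1+\tfrac{1}{e}\Bigr)\,|f|_{s+\sigma}|g|_{s+\sigma}.
\]
The main technical point, as in \cite{Poschel:2011}, is bookkeeping the max-norm convention on vector-valued objects so that the dimension of the phase space does not reappear in the final constant: the $|k|=\sum_\ell |k_\ell|$ trick absorbs a potential factor $n$ in the angle direction, while the sharp one-dimensional disc-Cauchy estimate handles the action direction; any dimension factor that would enter through the sum $\sum_\ell |\partial_{r_\ell} h|_s$ is absorbed by the max-norm convention before summation.
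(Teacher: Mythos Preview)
Your approach is essentially the paper's own: expand $[f,g]=Dg\cdot f - Df\cdot g$ componentwise, bound the angle--derivatives via $\sum_{\ell}|\partial_{\theta_\ell}h|_s=\sum_k |k|\,|h_k|\,e^{|k|s}\le (e\sigma)^{-1}|h|_{s+\sigma}$ using $\sup_{x\ge0}xe^{-x\sigma}=(e\sigma)^{-1}$, bound the action--derivatives by the one--dimensional Cauchy estimate $|\partial_{r_\ell}h|_s\le\sigma^{-1}|h|_{s+\sigma}$, and collect the factor $2$ from the two halves of the bracket. This is exactly the computation the paper carries out.

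One point deserves more care than your last sentence gives it. After you factor out $|f|_s$ you are left with $\sum_{\ell=1}^{m}|\partial_{r_\ell}g^i|_s$, and the Cauchy bound applied termwise yields $m\sigma^{-1}|g|_{s+\sigma}$, not $\sigma^{-1}|g|_{s+\sigma}$. Your claim that ``the max--norm convention before summation'' absorbs this factor is not an argument: once you have passed to $\sum_\ell |f^{r_\ell}|_s|\partial_{r_\ell}g^i|_s$ and replaced either factor by the max over components, the sum over $\ell$ genuinely produces $m$. The paper's write--up is equally terse at this step, simply asserting $|D_rg_k||f^r_{\ell-k}|$ contributes $\sigma^{-1}|g|_{s+\sigma}|f|_{s+\sigma}$; but neither the paper's line nor yours explains how the number of action variables disappears. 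As written, both arguments actually deliver $\tfrac{2}{\sigma}(m+\tfrac{1}{e})$. For the purposes of this paper the discrepancy is harmless (the lemma is only used to get \emph{some} polynomial loss in $\sigma$), but you should either accept the dimension--dependent constant or supply a genuine argument for removing it.
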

\begin{proof}
Consider $f=(f^\teta,f^r) = \sum_{j=1}^n f^{\teta_j}\base{\teta_j} + f^{r_j}\base{r_j}$ and $g=(g^\teta,g^r) = \sum_{j=1}^n g^{\teta_j}\base{\teta_j} + g^{r_j}\base{r_j}$.
From the definition of the Lie Brackets we have $[{f},{g}] = \sum_k f(g^k) - {g}(f^k),$ where every component $k$ reads
\begin{align*}
[f,g]^k &= \sum_{j=1}^n (f^{\teta_j}\frac{\partial g^k}{\partial\teta_j} + f^{r_j}\frac{\partial g^k}{\partial r_j}) - (g^{\teta_j}\frac{\partial f^k}{\partial \teta_j} + g^{r_j}\frac{\partial f^k}{\partial r_j})\\
        &= (Dg\cdot f - Df\cdot g)^k. 
\end{align*}
We observe that for an holomorphic function $h: \normal{T}^n_{s+\sigma}\to \C$, one has 
 \[\abs{\frac{\partial h}{\partial r_j}}_s = \sum_k \abs{\frac{\partial h _k(r)}{\partial r_j}}_s e^{\abs{k} s}\leq \sum_k \frac{1}{\sigma}{\abs{h_k(r)}_{s+\sigma}} e^{\abs{k}s}\leq  \frac{1}{\sigma}\abs{h}_{s+\sigma},\]
and 
\begin{align*}
\abs{\frac{\partial h}{\partial \teta_j}}_s &= \sum_k \abs{k_j}\abs{h_k(r)}_s e^{\abs{k}s}\leq \sum_k \abs{k} \abs{h_k(r)}_s e^{\abs{k} (s+\sigma)}\, e^{-\abs{k}\sigma}\\
& \leq \frac{1}{e\sigma}\sum_k \abs{h_k(r)}_{s+\sigma}e^{\abs{k}(s+\sigma)} = \frac{1}{e\sigma}\abs{h}_{s+\sigma},
\end{align*}
where we bound $\abs{k}e^{-\abs{k}\sigma}$ with the maximum attained by $x e^{-x\sigma},\, x > 0$, in $1/\sigma$, that is $1/e\sigma$. \\ Therefore, consider $f$ and $g$ in their Fourier's expansion, $Dg\cdot f$ read
\[Dg \cdot f = \sum_{k,\ell}\,i {k\cdot f^\theta_\ell}g_k e^{i (k+\ell)\teta} + D_r g_k\cdot f^{r}_\ell\, e^{i (k + \ell)\cdot\teta} = \sum_{k,\ell} i\,k\cdot f^{\teta}_{\ell - k} g_k \,e^{i\ell\cdot\teta}+ D_r g_k\cdot f^{r}_{\ell-k}e^{i\ell\cdot\teta}. \]
Passing to norms we have the following inequality
\[\abs{Dg\cdot f}_s\leq \sum_{k,\ell} \abs{k}\abs{f^{\teta}_{\ell - k}}\abs{g_k} e^{\abs{k}s}e^{\abs{\ell-k}s} + \abs{D_r g_k}\abs{f^{r}_{\ell - k}} e^{\abs{k}s}e^{\abs{\ell -k }s}\leq\]
\begin{align*}
&\leq \sum_{k,\ell}\abs{k}\abs{g_k} e^{-\abs{k}\sigma} e^{\abs{k}(s+\sigma) }\abs{f^{\teta}_{\ell - k}}  e^{\abs{\ell-k}s} + \abs{D_r g_k}e^{\abs{k}s}\,\abs{f^r_{\ell - k}} e^{\abs{\ell - k}s}\\
&\leq \frac{1}{e\sigma}\abs{g}_{s+\sigma}\abs{f}_{s+\sigma} + \frac{1}{\sigma}\abs{g}_{s+\sigma}\abs{f}_{s+\sigma},
\end{align*}
which follows from the previous remark. Hence the lemma.
\end{proof}

\comment{\subsection*{Acknowledgments} \co{This work would have never seen the light without the mathematical (and moral) support and advises of A. Chenciner and J. Féjoz all the way through my Ph.D. I'm grateful and in debt with both of them. Thank you to T. Castan, B. Fayad, J.-P. Marco, P. Mastrolia and L. Niederman for the enlightening discussions we had and their constant interest in this (and future) work.} }


\comment{\bibliographystyle{plain}
\bibliography{finalbiblio}
\nocite{Herman:1983}
\nocite{EFK:2013}
\nocite{EFK:2015}
\nocite{Fayad-Krikorian:2009}
\nocite{Herman:notes}}

\end{document}